\newif\ifdraft
\newcommand{\eremk}{\hbox{}\hfill\rule{0.8ex}{0.8ex}}
\newcommand{\Omegaext}{\Omega^{\rm ext}}
\newcommand{\R}{\mathbb{R}}
\newcommand{\N}{\mathbb{N}}
\newcommand{\Bv}{\mathbf{v}}
\newcommand{\CB}{\mathcal{B}}
\newtheorem{definition}{Definition}[section]
\newtheorem{theorem}[definition]{Theorem}
\newtheorem{lemma}[definition]{Lemma}
\newtheorem{Proposition}[definition]{Proposition}
\newtheorem{remark}[definition]{Remark}
\numberwithin{equation}{section}
\newcommand{\todo}[1]{\textcolor{red}{#1}}
\newcommand{\todo}[1]{}
\newcommand{\diam}{\operatorname*{diam}}
\newcommand{\ext}{\operatorname*{ext}}
\renewcommand{\H}{\mathcal{H}}
\newcommand{\lefttriplenorm}{\ensuremath{\left| \! \left| \! \left|}}
\newcommand{\righttriplenorm}{\ensuremath{\right| \! \right| \! \right|}}
\newcommand{\triplenorm}[1]{\lefttriplenorm #1 \righttriplenorm}
\newcommand{\abs}[1]{\left\vert #1 \right\vert}
\newcommand{\skp}[1]{\left< #1 \right>}
\newcommand{\norm}[1]{\left\| #1 \right\|}
\newcommand{\ra}[0]{\rightarrow}
\newcommand{\T}{\mathcal{T}}
\newcommand\blfootnote[1]{%
  \begingroup
  \renewcommand\thefootnote{}\footnote{#1}%
  \addtocounter{footnote}{-1}%
  \endgroup
}
\begin{document}

\hskip 10 pt

\begin{center}
{\fontsize{14}{20}\bf 
Caccioppoli-type estimates and $\H$-Matrix approximations to inverses for FEM-BEM couplings}
\end{center}

\begin{center}
\textbf{Markus Faustmann, Jens Markus Melenk, Maryam Parvizi}\\
\bigskip
{Institute for Analysis and Scientific Computing}\\
TU Wien\\
Wiedner Hauptstr. 8-10, 1040 Wien, Austria\\markus.faustmann@tuwien.ac.at, melenk@tuwien.ac.at, maryam.parvizi@tuwien.ac.at\\
\bigskip
\end{center}

\begin{abstract}
We consider three different methods for the coupling of the finite element method and the 
boundary element method, the Bielak-MacCamy coupling, 
the symmetric coupling, and the Johnson-N\'ed\'elec coupling. For each coupling we provide discrete interior regularity 
estimates. As a consequence, we are able to prove the existence of exponentially convergent 
$\H$-matrix approximants 
to the inverse matrices corresponding to the lowest order Galerkin discretizations of the couplings.
\blfootnote{{\bf Acknowledgement.} MP was funded by the Austrian Science Fund (FWF) project P 28367 and JMM was supported 
by the Austrian Science Fund (FWF) by the special
research program Taming complexity in PDE systems (grant SFB F65).}
\end{abstract}

\section{Introduction}
Transmission problems are usually posed on unbounded domains, where a (possibly nonlinear) equation is given 
on some bounded domain, and another linear equation is posed on the complement of the bounded domain. 
While the interior problem can be treated numerically by the finite element method (FEM), the unbounded nature
of the exterior problem makes this problematic. A suitable method to treat unbounded problems is provided by 
the boundary element method (BEM), where the differential equation in the unbounded domain 
is reformulated via an integral equation posed just on the boundary. 
In order to combine both methods for transmission problems, additional conditions on the interface have to be 
fulfilled, which leads to different approaches for the coupling of the FEM and the BEM. 
We study three different FEM-BEM couplings, the Bielak-MacCamy coupling \cite{BielakMacCamy}, 
Costabel's symmetric coupling \cite{Costabel88,CostabelStephan90}, and the Johnson-N\'ed\'elec coupling
\cite{JohnsonNedelec80}. Well-posedness and unique solvability of these formulation have been studied in, 
e.g., \cite{Steinbach11,Sayas13,AFFKMP13}, where a main observation is that the couplings are equivalent 
to an elliptic problem.

Elliptic problems typically feature interior regularity known as Caccioppoli estimates, where
stronger norms can be estimated by weaker norms on larger domains. In this paper, we provide 
such Caccioppoli-type estimates for the discrete problem. More precisely, we obtain simultaneous
interior regularity estimates for the finite element solution as well as for the single- and double-layer potential
of the boundary element solution (cf.\ Theorems~\ref{th:CaccioppoliBMcC}, \ref{th:CaccioppoliSymm}, \ref{th:CaccioppoliJN}). 
Discrete Caccioppoli-type estimates for the FEM and the BEM separately can 
be found in our previous works \cite{FMP15,AFM20,FMP16,FMP17}. While the techniques for the FEM and the BEM part are 
similar therein, some essential modifications have to be made to treat the coupling terms on the boundary.

An important consequence of Caccioppoli-type estimates is the existence 
of low-rank approximants to inverses of FEM or BEM matrices, as these inverses are usually dense matrices 
\cite{BebendorfHackbusch03,Boerm10,FMP15,FMP16,FMP17}.
In particular, FEM and BEM inverses can be approximated in the data-sparse $\H$-matrix format, introduced in 
\cite{Hackbusch99}. In comparison with other compression methods, $\H$-matrices have the advantage that they 
come with an additional approximative arithmetic that allows for addition, multiplication, inversion or 
$LU$-decompositions in the $\H$-matrix format; for more details we refer to 
\cite{GrasedyckDissertation,GrasedyckHackbusch03,HackbuschBuch}. In this work, we present an
approximation result 
for the inverses of stiffness matrices corresponding to the lowest order FEM-BEM discretizations.
On admissible blocks, determined by standard admissibility conditions, 
the inverses can be approximated by a low-rank factorization, where the error converges exponentially in the 
rank employed.

The paper is structured as follows: In Chapter~\ref{sec:mainresults}, we present our model problem and state the main results
of the article, the discrete Caccioppoli-type interior regularity estimates for each coupling, and the existence 
of exponentially convergent $\H$-matrix approximants to the inverse matrices corresponding to the 
FEM-BEM discretizations of the couplings. Chapter~\ref{sec:proofsCacc} is concerned with the proofs 
of the Caccioppoli-type estimates. Chapter~\ref{sec:proofHmatrix} provides an abstract framework for the proof of  
low-rank approximability to inverse matrices, which can be applied for other model problems as well. 
Finally, Chapter~\ref{sec:numerics} provides some numerical examples. 

\medskip

\section{Main Results}\label{sec:mainresults}
On a Lipschitz domain $\Omega \subset \mathbb{R}^d$, $d=2,3$ with 
polygonal (for $d = 2$) or polyhedral (for $d = 3$) boundary $\Gamma := \partial \Omega$,
we study the transmission problem
\begin{subequations}\label{eq:model}
\begin{alignat}{5}
-\operatorname*{div}( \mathbf{C}\cdot \nabla u)&=f \quad \hspace{1cm}&&\text{in} \,\,\Omega,\\
\label{e2}
-\Delta u ^ {\ext} &= 0&&\text{in} \,\,\,\Omega ^ {\text{\text{ext}}},\\
\label{e3}
u-u ^ {\ext}&= u_0&&\text{on} \,\, \Gamma ,\\
\label{e4}
\left(\mathbf{C} \nabla u - \nabla u ^ {\ext}\right) \cdot \nu &= \varphi _0\,\,\,\, &&\text{on} \,\, \Gamma, \\
\label{e5}
u ^ {\ext} &= \mathcal{O} ( \left|x \right| ^ {-1}) \,\,\, &&\text{as}\, \left|x \right|  \rightarrow \infty. 
\end{alignat}  
\end{subequations}
Here, $\Omegaext := \R^d \setminus\overline{\Omega}$ denotes the exterior of $\Omega$, and
$\nu$ denotes the outward normal vector. For the data, we assume
$f \in L^2(\Omega)$, $u_0 \in H^{1/2}(\Gamma)$, $\varphi_0 \in H^{-1/2}(\Gamma)$, and
$\mathbf{C} \in L^{\infty}(\Omega;\R^d)$ to be pointwise symmetric and positive definite, i.e.,
there is a constant $C_{\rm ell}>0$ such that
\begin{equation}
\label{eq:Cell}
\skp{\mathbf{C}x,x}_2\geq C_{\rm ell} \norm{x}_2^2.
\end{equation}
For $d = 2$, we assume $\operatorname{diam} \Omega < 1$ 
for the single-layer operator $V$ introduced below to be elliptic.
\begin{remark}
In the following, we consider three different variational formulations, 
namely, the symmetric coupling, the Bielak-MacCamy coupling, 
and the Johnson-N\'ed\'elec coupling  
for our model problem. 
All three are well-posed without compatibility assumptions on the data. 
The compatibility condition $\skp{f,1}_{L^2(\Omega)} + \skp{\varphi_0,1}_{L^2(\Gamma)} = 0$ for $d=2$ ensures the radiation condition (\ref{e5}); 
lifting the compatibility condition yields a solution that satisfies 
a different radiation condition, namely, 
$u^{\rm ext}  = b \log |x|+ \mathcal{O} ( \left|x \right| ^ {-1})$ as $|x| \rightarrow \infty$ for some $b \in \R$
for the three coupling strategies considered. 
Our analysis requires only 
the unique solvability of the variational formulations.  
\eremk
\end{remark}
With the Green's function for the Laplacian 
$G(x) = -\frac{1}{2\pi} \log\abs{x}$ for $d=2$ and $G(x) = \frac{1}{4\pi}\frac{1}{\abs{x}}$ for $d=3$,
we introduce the single-layer boundary integral operator
$V \in L(H^{-1/2}(\Gamma),H^{1/2}(\Gamma))$ by 
\begin{eqnarray*}
V\phi(x) := \int_{\Gamma} G(x-y)\phi(y)ds_y, \quad x \in \Gamma.
\end{eqnarray*}
The double-layer operator
$K \in L(H^{1/2}(\Gamma),H^{1/2}(\Gamma))$ has the form 
\begin{eqnarray*}
K\phi(x) := \int_{\Gamma} (\partial_{\nu(y)}G(x-y))\phi(y)ds_y, \quad x \in \Gamma,
\end{eqnarray*}
where $\partial_{\nu(y)}$ denotes the normal derivative at the point $y$.
The adjoint of $K$ is denoted by $K'$.
Finally, the hyper-singular operator $W\in L(H^{1/2}(\Gamma),H^{-1/2}(\Gamma))$ 
is given by 
\begin{eqnarray*}
W\phi(x) := -\partial_{\nu(x)}\int_{\Gamma} (\partial_{\nu(y)}G(x-y))\phi(y)ds_y, \quad x \in \Gamma.
\end{eqnarray*}
The single-layer operator $V$ is elliptic for $d=3$ and for $d=2$ provided $\diam(\Omega) < 1$.
The hyper-singular operator $W$ is  semi-elliptic with a kernel of dimension
being the number of components of connectedness of $\Gamma$.

In addition to the boundary integral operators, 
we need the volume potentials $\widetilde{V}$ and $\widetilde{K}$ defined by
\begin{align*}
\widetilde{V} \phi ( x) &:= \int_{\Gamma}G( x-y) \phi  ( y) d s _y,  &x \in \mathbb{R}^d \backslash \Gamma,\\
\widetilde{K} \phi ( x) &:= \int_{\Gamma} \partial _{{\nu} ( y) }G( x-y) \phi( y) d s_y,  &x \in \mathbb{R}^d \backslash \Gamma.
\end{align*}

In this paper, we study discretizations of
weak solutions of the model problem reformulated via three different FEM-BEM couplings: 
the Bielak-MacCamy coupling, Costabel's symmetric coupling, and the Johnson-N\'ed\'elec coupling. 
All these couplings lead to a variational formulation of finding 
$(u,\varphi) \in H^1(\Omega) \times H^{-1/2}(\Gamma) =: \mathbf{X}$
such that
\begin{align}\label{eq:abstractFEMBEM}
a(u,\varphi;\psi,\zeta) = g(\psi,\zeta) \qquad \forall (\psi,\zeta) \in \mathbf{X},
\end{align}
where $a: \mathbf{X}\times \mathbf{X} \rightarrow \R$ is a bilinear form and 
$g: \mathbf{X} \rightarrow \R$ is continuous linear functional. \\

For the discretization, we assume that $\Omega$ is triangulated by a quasi-uniform mesh 
${\mathcal T}_h=\{T_1,\dots,T_{\widehat n}\}$ of mesh width 
$h := \max_{T_j\in \mathcal{T}_h}{\rm diam}(T_j)$. 
The elements $T_j \in \mathcal{T}_h$ are open triangles ($d=2$) or tetrahedra ($d=3$). 
Additionally, we assume that the mesh $\T_h$ is regular in the sense of Ciarlet and 
$\gamma$-shape regular in the sense that 
 we have ${\rm diam}(T_j) \le \gamma\,|T_j|^{1/2}$ for all $T_j\in\mathcal{T}_h$, 
where $|T_j|$ denotes the Lebesgue measure of $T_j$. 
By $\mathcal{K}_h:=\{K_1,\dots,K_  {\widehat{m}}\}$, we denote the restriction of ${\mathcal T}_h$ to the boundary 
$\Gamma$, which is a regular and shape-regular triangulation of the boundary. \\

For simplicity, we consider lowest order Galerkin discretizations in 
$S^{1,1}(\T_h) \times S^{0,0}(\mathcal{K}_h)$, where
\begin{align*}
 S^{1,1}({\mathcal T}_h) &:= \{u \in C(\Omega)\, \colon \, u|_T \in P_1(T) \quad  \forall T \in \T_h \}, \\
 S^{0,0}(\mathcal{K}_h) &:= \{u \in L^2(\Gamma)\, \colon \, u|_K \in P_0(K) \quad \forall K \in \mathcal{K}_h \},
\end{align*}
with $P_p(T)$ denoting the space of polynomials of maximal degree $p$ on an element $T$.
We let ${\mathcal B}_h:= \{\xi_j\, :\, j = 1,\dots, n\}$ be the basis of 
$S^{1,1}({\mathcal T}_h)$ consisting of the standard hat functions, and we let 
 ${\mathcal W}_h:= \{\chi_j\, :\, j = 1,\dots, m\}$ be the basis of 
$S^{0,0}({\mathcal K}_h)$ that consists of the
 characteristic functions of the surface elements. 
These bases feature the following norm equivalences: 
\begin{subequations}
\begin{align}\label{eq:basisa}
c_1h^{d/2}\norm{\mathbf{x}}_2 &\leq \norm{\Phi\mathbf{x}}_{L^2(\Omega)} 
\leq c_2 h^{d/2}\norm{\mathbf{x}}_2
\quad\quad\;\; \forall\, \mathbf{x} \in \R^n, \\  \label{eq:basisb}
c_3h^{(d-1)/2}\norm{\mathbf{y}}_2 &\leq \norm{\Psi\mathbf{y}}_{L^2(\Gamma)} 
\leq c_4 h^{(d-1)/2}\norm{\mathbf{y}}_2
\quad \forall\, \mathbf{y} \in \R^m 
\end{align}
\end{subequations}
for the 
isomorphisms $\Phi:\R^n\ra S^{1,1}({\mathcal T}_h)$, $\mathbf{x} \mapsto \sum_{j=1}^n\mathbf{x}_j\xi_j$ and
$\Psi:\R^m\ra S^{0,0}({\mathcal K}_h)$, $\mathbf{y} \mapsto \sum_{j=1}^m\mathbf{y}_j\chi_j$.\\

Finally, we need the notion of concentric boxes.
\begin{definition}
\textbf{(Concentric boxes)}
Two  (quadratic) boxes $B_R$ and $B_{R^\prime}$ of side length $R$ and ${R^\prime}$
are said to be concentric if they have the same barycenter and $B_R$
can be obtained by a stretching of $B_{R^\prime}$
by the factor $R/R^\prime$ taking their common barycenter as the origin.
\end{definition}

Before we can state our first main results, the interior regularity estimates, 
we specify the norm we are working with, 
an $h$-weighted $H^1$-equivalent norm.
For a box $B_R$ with side length $R$, an open set 
$\omega \subset \R^d$, and $v \in H^1(B_R\cap \omega)$, we introduce
\begin{align}\label{eq:def:triplenorm}
\triplenorm{  v }  ^2 _{h, R, \omega} :=
h^2\left\| \nabla v \right\|^2_{L^2(B_{R}\cap \omega)} 
+ \left\| v \right\|^2_{L^2(B_{R}\cap \omega)}.
\end{align}
For the case $\omega = \R^d$, we abbreviate $\triplenorm{\cdot}_{h,R,\R^d} =: \triplenorm{\cdot}_{h,R}$ and 
for the case $\omega = \R^d\backslash \Gamma$
we write $\triplenorm{\cdot}_{h,R,\R^d\backslash\Gamma} =: \triplenorm{\cdot}_{h,R,\Gamma^c}$ and understood the norms over 
$B_R\backslash \Gamma$ as a sum over integrals $B_R\cap\Omega$ and $B_R\cap \Omega^{\rm ext}$.
Moreover, for triples $(u,v,w) \in H^1(B_R\cap\Omega) \times H^1(B_R)\times H^1(B_R\backslash\Gamma)$, we set
\begin{align}\label{eq:def:triplenormVec}
\triplenorm{(u,v,w)}_{h,R}^2 := \triplenorm{u}_{h,R,\Omega}^2+\triplenorm{v}_{h,R}^2+\triplenorm{w}_{h,R,\Gamma^c}^2.
\end{align}
We note that $u$ will be the interior solution, $v$ be chosen as a single-layer potential and $w$ as a double-layer potential 
(which jumps across $\Gamma$), which explains the different requirements for the set $\omega$.

\subsection{The Bielak–MacCamy coupling} \label{S1}
The Bielak–MacCamy coupling is derived by making a single-layer ansatz for the exterior 
solution, i.e., $u^{\rm ext} = \widetilde{V}\varphi$ in $\Omegaext$ with an 
unknown density $\varphi \in H^{-1/2}(\Gamma)$. For more details, we refer to \cite{BielakMacCamy}.
This approach leads to the bilinear form 
\begin{subequations}\label{eq:BMCcouplingBLF}
\begin{alignat}{2}
a_{\rm bmc}(u,\varphi;\psi,\zeta) &:= 
 \label{e6}\skp{\mathbf{C}\nabla u , \nabla  \psi}_{L^2(\Omega) } + \skp{ (1/2-{K^\prime})\varphi,\psi} _ {L^{2}(\Gamma)} 
-
\skp{u ,\zeta}_{L^2(\Gamma)} +  \skp{V \varphi,\zeta}_{L^2(\Gamma)}, \\
g_{\rm bmc}(\psi,\zeta) &:= 
\skp{f,\psi}_{L^2(\Omega)} + \skp{\varphi_0,\psi}_{L^2(\Gamma)} - \skp{u_0 , \zeta}_{L^2(\Gamma)}.
\end{alignat}
\end{subequations}
Replacing $H^1 (\Omega) \times H^{-{1}/{2}}(\Gamma)$ by the finite dimensional subspace 
$S^{1,1}(\mathcal{T}_h) \times S^{0,0}( \mathcal{\mathcal{K}}_h)$, we arrive at the Galerkin discretization of \eqref{eq:BMCcouplingBLF} 
of finding $( u _h , \varphi _h) \in S^{1,1}(\mathcal{T}_h) \times S^{0,0}(\mathcal{K}_h)$ such that
\begin{subequations}\label{eq:BMcC}
\begin{alignat}{2}
\skp{\mathbf{C} \nabla u_h , \nabla \psi_h}_{L^2(\Omega)} + \skp{(1/2 -{K^\prime})\varphi_h,\psi_h}_{L^2(\Gamma)} 
&= \skp{f,\psi_h}_{L^2(\Omega)} + \skp{\varphi_0,\psi_h}_{L^2(\Gamma)}  \,\, \forall \psi_h \in S^{1,1}(\T_h),\\
\skp{u_h,\zeta_h}_{L^2(\Gamma)} -  \skp{V \varphi_h, \zeta_h}_{L^2(\Gamma)}
&= \skp{u_0, \zeta_h}_{L^2(\Gamma)} \,\, \forall \zeta _h  \in 
S^{0,0}(\mathcal{K}_h)  .
\end{alignat}
\end{subequations}

If the ellipticity constant of $\mathbf{C}$ satisfies
$C_{\rm ell} > 1/4$, then \cite[Thm.~9]{AFFKMP13} shows that the Bielak-MacCamy coupling is equivalent to an elliptic
problem with the use of a (theoretical) implicit stabilization. Therefore, \eqref{eq:BMcC} 
is uniquely solvable.

The following theorem is one of the main results of our paper. It states that for the interior finite element 
solution and the single-layer potential of the boundary element solution, a Caccioppoli type estimate holds, 
i.e., the stronger $H^1$-seminorm can be estimated by a weaker $h$-weighted $H^1$-norm on a larger domain.

\begin{theorem}\label{th:CaccioppoliBMcC}
Assume that $C_{\rm ell} > 1/4$ in (\ref{eq:Cell}).
Let $\varepsilon \in (0,1) $ and $R \in (0,2\diam(\Omega))$ be such that $\frac{h}{R} < \frac{\varepsilon}{16}$, 
and let $B_R$ and $B_{(1+\varepsilon)R}$ be two concentric boxes. Assume that the data is localized away from 
$B_{(1+\varepsilon)R}$, i.e., 
$(\operatorname*{supp} f \cup \operatorname*{supp} \varphi_0 \cup  \operatorname*{supp} u_0) \cap B_{(1+\varepsilon)R} = \emptyset$.
Then, there exists a constant $C$ depending only on $\Omega$, $d$, 
and the $\gamma$-shape regularity of the quasi-uniform triangulation $\T_h$, such that for the solution  
$(u_h,\varphi_h)$ of \eqref{eq:BMcC}  we have
\begin{align}
\nonumber
\left\| \nabla u_h \right\|_ {L^2(B_{R}\cap\Omega)}
+\left\| \nabla \widetilde{V} \varphi_h \right\|_ {L^2(B_{R})} 
&\le 
 \frac{C}{\varepsilon R}  \left( 
\triplenorm{  u_h }  _{h,( 1+\varepsilon) R, \Omega} + 
\triplenorm { \widetilde{V}\varphi_h } _{h,(1+\varepsilon) R} \right), 
\end{align}
where the norms on the right-hand side are defined in \eqref{eq:def:triplenorm}.
\end{theorem}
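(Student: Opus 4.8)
My plan is to establish the discrete Caccioppoli estimate by testing the Galerkin equations \eqref{eq:BMcC} against a carefully chosen test function involving a smooth cut-off, using the Scott–Zhang (or Clément) quasi-interpolation to make the cut-off admissible in the discrete spaces. The key structural difficulty, compared to a pure FEM Caccioppoli estimate, is that the quantity $\widetilde{V}\varphi_h$ lives on all of $\R^d$ and is not itself a finite element function; only $\varphi_h\in S^{0,0}(\mathcal{K}_h)$ on $\Gamma$ is. So one must combine the interior FEM estimate with a separate caloric-type / harmonic argument for the layer potential, linked through the coupling terms on $\Gamma$. Since $\widetilde{V}\varphi_h$ is harmonic in $\R^d\setminus\Gamma$, a continuous Caccioppoli estimate already gives control of $\nabla\widetilde{V}\varphi_h$ on $B_R$ by $\triplenorm{\widetilde{V}\varphi_h}_{h,(1+\varepsilon)R}$ away from $\Gamma$ — so the real work is only near $\Gamma\cap B_{(1+\varepsilon)R}$, and there we must exploit the two Galerkin equations simultaneously.

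Concretely, the steps I would carry out are: (1) Fix a cut-off function $\eta\in C_0^\infty$ with $\eta\equiv 1$ on $B_R$, $\operatorname{supp}\eta\subset B_{(1+\varepsilon/2)R}$, $\|\nabla\eta\|_\infty\lesssim 1/(\varepsilon R)$, $\|D^2\eta\|_\infty\lesssim 1/(\varepsilon R)^2$. (2) Since $\eta^2 u_h$ is not in $S^{1,1}(\T_h)$, insert its Scott–Zhang interpolant $I_h(\eta^2 u_h)$ as the test function $\psi_h$ in the first equation of \eqref{eq:BMcC}; the commutator/interpolation error terms are controlled by the hypothesis $h/R<\varepsilon/16$, which guarantees that the interpolation introduces only lower-order terms absorbable by Young's inequality — this is exactly the mechanism from \cite{FMP15,AFM20}. (3) For the boundary unknown, test the second equation of \eqref{eq:BMcC} with a discrete version of $\eta^2\varphi_h$ (here $\chi_j\in S^{0,0}$, so one uses an $L^2$-projection onto $S^{0,0}(\mathcal{K}_h)$ of $\eta^2$ times the piecewise-constant $\varphi_h$), and use the ellipticity of $V$ to pull out a term $\|\widetilde{V}\varphi_h\|_{H^1}$ localized near $\Gamma$ — more precisely, one uses the identity relating $V\varphi_h$ on $\Gamma$ to the energy $\|\nabla\widetilde{V}\varphi_h\|_{L^2(\R^d)}^2$ and commutes $\eta$ through the integral operators, picking up smoothing commutators that are again lower order. (4) The coupling terms $\skp{(1/2-K')\varphi_h,\psi_h}_{L^2(\Gamma)}$ and $\skp{u_h,\zeta_h}_{L^2(\Gamma)}$ must be handled by trace inequalities: the trace of $\eta u_h$ on $\Gamma$ is bounded by $\triplenorm{u_h}_{h,(1+\varepsilon)R,\Omega}$ (up to the $h$-weighting and a factor $1/(\varepsilon R)$), and similarly the $H^{1/2}(\Gamma)$–$H^{-1/2}(\Gamma)$ duality of the double-layer term is rewritten via the potential $\widetilde{K}$ so that it too can be estimated by $\triplenorm{\widetilde{V}\varphi_h}_{h,(1+\varepsilon)R}$ and $\triplenorm{u_h}$. (5) Use the ellipticity constants — both $C_{\rm ell}$ for the $\mathbf{C}\nabla u\cdot\nabla u$ term and the assumption $C_{\rm ell}>1/4$ that makes the whole coupled form coercive (via the stabilization of \cite[Thm.~9]{AFFKMP13}) — to absorb all the $\eta^2|\nabla u_h|^2$ and $|\nabla\widetilde{V}\varphi_h|^2$ terms appearing with small coefficients on the right, then invoke Young's inequality throughout. (6) Finally use the nesting $B_{(1+\varepsilon/2)R}\subset B_{(1+\varepsilon)R}$ and the fact that $\eta\equiv 1$ on $B_R$ to read off the claimed inequality, with the constant $C$ depending only on $\Omega$, $d$, and the shape-regularity.

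The main obstacle, as I see it, is step (3)–(4): the coupling terms mix a boundary function $\varphi_h$ (only controlled in a weak negative-order norm, and only in terms of the volume potential $\widetilde{V}\varphi_h$) with the interior FEM trace. One cannot simply localize the boundary integral operators, since $V$ and $K'$ are nonlocal; the trick — and this is the "essential modification" the authors allude to in the introduction — is to always express boundary quantities through the associated volume potentials $\widetilde{V}\varphi_h$, $\widetilde{K}\varphi_h$, which are harmonic and hence amenable to interior regularity, and to verify that commuting the cut-off $\eta$ past these nonlocal operators produces only pseudodifferential commutators of order $-1$, whose contributions are of lower order in $h/R$ and therefore absorbable. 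Getting the bookkeeping of these commutator/smoothing terms right — so that every error term carries at least one extra power of $h/(\varepsilon R)$ and can be swallowed by the left side under the hypothesis $h/R<\varepsilon/16$ — is the crux, and it is where the interplay of the two Galerkin equations and the coercivity from $C_{\rm ell}>1/4$ is indispensable.
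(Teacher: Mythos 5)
Your overall strategy coincides with the paper's: a cut-off $\eta$, discrete test functions obtained by interpolating/projecting $\eta^2 u_h$ and $\eta^2\varphi_h$, rewriting of the nonlocal boundary terms through the volume potential, and absorption via Young's inequality. However, two steps that the paper needs are missing or misidentified in your plan, and without them the argument does not reach the stated inequality.

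First, the localization of the coercivity. You propose to invoke the global coercivity of the (implicitly stabilized) bilinear form from \cite[Thm.~9]{AFFKMP13}; but that stabilization is nonlocal and cannot be tested against $(\eta^2 u_h,\eta^2\varphi_h)$. The paper instead builds a \emph{local} ellipticity estimate from scratch: the identities $\skp{\nabla\widetilde V\varphi,\nabla(\eta^2u)}_{L^2(\Omega)}=\skp{(1/2+K')\varphi,\eta^2u}_{L^2(\Gamma)}$ and $\|\nabla(\eta\widetilde V\varphi)\|^2_{L^2(\R^d)}=\skp{V\varphi,\eta^2\varphi}_{L^2(\Gamma)}+\|(\nabla\eta)\widetilde V\varphi\|^2_{L^2(\R^d)}$ show that reconstructing $a_{\rm bmc}(u,\varphi;\eta^2u,\eta^2\varphi)$ from $C_{\rm ell}\|\nabla(\eta u)\|^2_{L^2(\Omega)}+\|\nabla(\eta\widetilde V\varphi)\|^2_{L^2(\R^d)}$ leaves over exactly one dangerous cross term $\skp{\nabla(\eta\widetilde V\varphi),\nabla(\eta u)}_{L^2(\Omega)}$, which is controlled by Young's inequality with a parameter $\rho$ satisfying $1/4<\rho/2<C_{\rm ell}$. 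This is precisely where $C_{\rm ell}>1/4$ enters the local argument; your plan does not identify this mechanism. Relatedly, for the Bielak--MacCamy coupling the operator to be rewritten is the \emph{adjoint} double layer, $(1/2-K')\varphi=-\gamma_1^{\rm ext}\widetilde V\varphi$, i.e.\ everything runs through $\widetilde V$ alone (no $\widetilde K$, no pseudodifferential commutators); the quantitative tool for the coupling terms is the inverse-type estimate $\|\gamma_1\widetilde V\psi_h\|_{L^2(B_R\cap\Gamma)}\lesssim h^{-1/2}(\|\nabla\widetilde V\psi_h\|_{L^2(B_{R'})}+\cdots)$ combined with super-approximation of the nodal interpolant (which supplies the extra powers of $h$ because $D^2 u_h$ vanishes elementwise --- the hypothesis $h/R<\varepsilon/16$ only controls supports).

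Second, a single pass of your argument yields an estimate of the form $\|\nabla u_h\|^2_{L^2(B_R\cap\Omega)}+\|\nabla\widetilde V\varphi_h\|^2_{L^2(B_R)}\lesssim \frac{h}{\varepsilon R}(\text{gradient terms on }B_{(1+\varepsilon)R})+\frac{1}{(\varepsilon R)^2}(L^2\text{ terms})$, i.e.\ the gradients on the right carry only the weight $h/(\varepsilon R)$, not the weight $h^2/(\varepsilon R)^2$ demanded by $\frac{1}{(\varepsilon R)^2}\triplenorm{\cdot}^2_{h,(1+\varepsilon)R}$. Since $h\le\varepsilon R$, this one-step bound is strictly weaker than the theorem. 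The paper closes this gap by applying the one-step estimate a second time on an intermediate pair of concentric boxes (with $\delta=\varepsilon/2$ and then $\widetilde\delta=\varepsilon/(\varepsilon+2)$), which squares the factor $h/(\varepsilon R)$ in front of the gradient terms; this iteration is also the reason for the constant $16$ in the hypothesis $h/R<\varepsilon/16$. Your step (6) reads the final inequality off directly and therefore does not prove the claimed estimate.
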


With the bases $\mathcal{B}_h$ of $S^{1,1}(\T_h)$ and $\mathcal{W}_h$ of $S^{0,0}(\mathcal{K}_h)$, the 
Galerkin discretization \eqref{eq:BMcC} 
leads to a block matrix $\mathbf{A}_{\rm bmc} \in \mathbb{R}^ {(n+m) \times( n+m)}$
\begin{align}\label{eq:matrixBMcC}
\mathbf{A}_{\rm bmc}:= \begin{pmatrix}
\mathbf{A}&\frac{1}{2}\mathbf{M}^T-\mathbf{K}^T\\
\mathbf{M}&\mathbf{V}
\end{pmatrix},
\end{align}
where $\mathbf{A} \in \mathbb{R} ^ {n\times n}$ is given by
$\mathbf{A}_{ij} = \skp{\mathbf{C}\nabla \xi_j,\nabla \xi_i }_{L^2(\Omega)}$, 
$\mathbf{M} \in \mathbb{R} ^ {m\times n}$ by $\mathbf{M}_{ij} = \skp{\xi_i,\chi_j}_{L^2(\Gamma)}$, 
$\mathbf{K} \in \mathbb{R} ^ {m\times n}$ by $\mathbf{K}_{ij} = \skp{K\xi_i,\chi_j}_{L^2(\Gamma)}$,
and $\mathbf{V} \in \mathbb{R} ^ {m\times m}$ by $\mathbf{V}_{ij} = \skp{V\chi_j,\chi_i}_{L^2(\Gamma)}$.


\subsection{Costabel's symmetric coupling}
Using the representation formula, or more precisely, both single- and double-layer potential, 
for the exterior solution, one obtains an expression
$u^{\rm ext} = - \widetilde{V}\varphi + \widetilde{K}u^{\rm ext}$ with
$\varphi = \nabla u^{\rm ext} \cdot \nu$, \cite[Eq.~(55)]{AFFKMP13}. By coupling the interior and exterior solution in a symmetric way
(which uses all four boundary integral operators), this leads to Costabel's symmetric coupling, introduced
in \cite{Costabel88} and \cite{han90}.
Here, the bilinear form and right-hand side are given by
\begin{subequations}\label{eq:symmcouplingBLF}
\begin{alignat}{2}
 a_{\rm sym}(u,\varphi;\psi,\zeta) &:= 
 \skp{\mathbf{C}\nabla u, \nabla\psi}_{L^2(\Omega)} + \skp{({K^\prime}-1/2)\varphi,\psi}_{L^2(\Gamma)} 
 + \skp{W u, \psi}_{L^2(\Gamma)}  \nonumber \\
 &\qquad +\skp{(1/2-K) u,\zeta}_{L^2(\Gamma)} +  \skp{V \varphi,\zeta}_{L^2(\Gamma)},
 \\ \nonumber
 g_{\rm sym}(\psi,\zeta) &:= 
 \skp{f,\psi}_{L^2(\Omega)} + \skp{\varphi_0 + Wu_0,\psi}_{L^2(\Gamma)} +\skp{(1/2-K) u_0,\zeta}_{L^2(\Gamma)} 
 \\
 &=: 
 \skp{f,\psi}_{L^2(\Omega)} + \skp{v_0,\psi}_{L^2(\Gamma)} +\skp{w_0,\zeta}_{L^2(\Gamma)}. 
\end{alignat}
\end{subequations}
The Galerkin discretization leads to the problem of
finding $(u_h , \varphi_h) \in S^{1,1}(\T_h) \times S^{0,0}(\mathcal{K}_h)$ such that
\begin{subequations}\label{eq:symmcoupling}
\begin{alignat}{2}
  \skp{\mathbf{C}\nabla u_h, \nabla\psi_h}_{L^2(\Omega)} 
  \! +\! \skp{ ({K^\prime}\!-\!1/2)\varphi_h, \psi_h}_{L^2(\Gamma)}\! +\! \skp{W u_h, \psi_h}_{L^2(\Gamma)} 
  &\! =\! \skp{f, \psi_h}_{L^2(\Omega)}\! +\! \skp{v_0, \psi_h}_{L^2(\Gamma)}	
  \label{eq40a}  \\
  \label{eq40b}\skp{(1/2-K)u_h ,\zeta_h}_{L^2(\Gamma)} 
  +  \skp{V \varphi_h, \zeta_h}_{L^2(\Gamma)} 
  &\!= \! \skp{w_0 , \zeta_h}_{L^2(\Gamma)}
\end{alignat}
\end{subequations}
for all $(\psi_h,\zeta_h) \in S^{1,1}(\T_h)\times S^{0,0}(\mathcal{K}_h)$. \\

With similar arguments as for the Bielak-MacCamy coupling, 
\cite{AFFKMP13} prove unique solvability for the symmetric coupling for any $C_{\rm ell}>0$. \\

The following theorem is similar to Theorem~\ref{th:CaccioppoliBMcC} and provides a simultaneous Caccioppoli-type 
estimate for the interior solution as well as for the single-layer potential of the boundary solution 
and the double-layer potential of the trace of the interior solution. Here, the double-layer potential additionally
appears since all boundary integral operators, especially the hyper-singular operator 
appear in the coupling.

\begin{theorem}\label{th:CaccioppoliSymm}
Let $\varepsilon \in (0,1) $ and $R \in (0,2\diam(\Omega)) $ be such that $\frac{h}{R} < \frac{\varepsilon}{32}$, 
and let $B_R$ and $B_{(1+\varepsilon)R}$ be two concentric boxes. Assume that the data is localized away from 
$B_{(1+\varepsilon)R}$, i.e., 
$(\operatorname*{supp} f \cup  \operatorname*{supp}v_0 \cup  \operatorname*{supp} w_0) \cap B_{(1+\varepsilon)R} = \emptyset$.
Then, there exists a constant $C$ depending only on $\Omega$, $d$, 
and the $\gamma$-shape regularity of the quasi-uniform triangulation $\T_h$, such that for the solution  
$(u_h,\varphi_h)$ of \eqref{eq:symmcoupling}  we have
\begin{align}
\left\| \nabla u_h \right\|_ {L^2(B_{R}\cap\Omega)} + \left\| \nabla \widetilde{V} \varphi_h \right\|_ {L^2(B_{R})} +
\left\| \nabla \widetilde{K}u_h \right\|_ {L^2(B_{R}\backslash\Gamma)} \le 
 \frac{C}{\varepsilon R} \triplenorm{(u_h,\widetilde V \phi_h,\widetilde K u_h)}_{h,(1+\varepsilon) R},
\end{align}
where the norm on the right-hand side is defined in \eqref{eq:def:triplenormVec}.
\end{theorem}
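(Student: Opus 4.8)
The plan is to run a discrete Caccioppoli argument as in our previous works \cite{FMP15,AFM20,FMP16,FMP17}, the new ingredient being a way of combining the interior and the boundary Galerkin equations so that all four boundary integral operators $V,K,K',W$ are replaced by Cauchy data of the two layer potentials $v:=\widetilde{V}\varphi_h$ and $w:=\widetilde{K}u_h$, both harmonic on $\R^d\setminus\Gamma$. Fix a cut-off $\eta\in C^\infty(\R^d)$ with $\eta\equiv1$ on $B_R$, $\operatorname{supp}\eta\subset B_{(1+\varepsilon/2)R}$ and $\norm{\nabla^k\eta}_{L^\infty}\lesssim(\varepsilon R)^{-k}$ for $k=1,2$. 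Since $\eta\equiv1$ on $B_R$, it suffices to bound $\norm{\eta\nabla u_h}_{L^2(\Omega)}+\norm{\eta\nabla v}_{L^2(\R^d)}+\norm{\eta\nabla w}_{L^2(\R^d\setminus\Gamma)}$ by $\tfrac{C}{\varepsilon R}\triplenorm{(u_h,v,w)}_{h,(1+\varepsilon)R}$.

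First I would rewrite the coupling terms through the jump relations of the potentials, namely $(K'-1/2)\varphi_h=\gamma_1^{\rm ext}v$, $Wu_h=-\gamma_1 w$ (the conormal derivative of the double-layer potential is continuous across $\Gamma$), $(1/2-K)u_h=-\gamma_0^{\rm int}w$, and $V\varphi_h=\gamma_0 v$. Testing \eqref{eq40a} with the Scott--Zhang quasi-interpolant $I_h(\eta^2u_h)\in S^{1,1}(\T_h)$ and \eqref{eq40b} with the $L^2(\Gamma)$-projection $\pi_h(\eta^2\varphi_h)\in S^{0,0}(\mathcal{K}_h)$, the right-hand sides $g_{\rm sym}(\cdot)$ vanish, because the data is localised away from $B_{(1+\varepsilon)R}$ and $I_h$, $\pi_h$ enlarge supports only by $\mathcal{O}(h)$, which remains inside $B_{(1+\varepsilon)R}$ by $h/R<\varepsilon/32$. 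Replacing $I_h(\eta^2u_h)$ by $\eta^2u_h$ and $\pi_h(\eta^2\varphi_h)$ by $\eta^2\varphi_h$ generates consistency errors $\mathcal{E}_A,\mathcal{E}_B$, leaving the two identities $\skp{\mathbf{C}\nabla u_h,\nabla(\eta^2u_h)}_{L^2(\Omega)}+\skp{\gamma_1^{\rm ext}v-\gamma_1 w,\eta^2\gamma_0 u_h}_{L^2(\Gamma)}=-\mathcal{E}_A$ and $\skp{\gamma_0 v-\gamma_0^{\rm int}w,\eta^2\varphi_h}_{L^2(\Gamma)}=-\mathcal{E}_B$.

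Next I would bring in Green identities for the two harmonic potentials. Since $\eta$ is compactly supported, testing the weak Laplace equations for $v$ and $w$ on $\Omega$ and on $\Omegaext$ with $\eta^2v$, $\eta^2w$, and with the mixed pairs, and inserting the jumps $\gamma_1^{\rm int}v-\gamma_1^{\rm ext}v=\varphi_h$, $\gamma_0^{\rm int}w-\gamma_0^{\rm ext}w=-\gamma_0 u_h$, $\gamma_1^{\rm int}w=\gamma_1^{\rm ext}w$, one computes
\begin{align*}
\norm{\eta\nabla v}_{L^2(\R^d)}^2 &= -2\int_{\R^d}\eta\,v\,\nabla\eta\cdot\nabla v+\skp{\varphi_h,\eta^2\gamma_0 v}_{L^2(\Gamma)},\\
\norm{\eta\nabla w}_{L^2(\R^d\setminus\Gamma)}^2 &= -2\int_{\R^d\setminus\Gamma}\eta\,w\,\nabla\eta\cdot\nabla w-\skp{\gamma_1 w,\eta^2\gamma_0 u_h}_{L^2(\Gamma)},\\
\skp{\gamma_0^{\rm int}w,\eta^2\varphi_h}_{L^2(\Gamma)}-\skp{\gamma_1^{\rm ext}v,\eta^2\gamma_0 u_h}_{L^2(\Gamma)} &= 2\int_{\R^d\setminus\Gamma}\eta\,\bigl(w\,\nabla\eta\cdot\nabla v-v\,\nabla\eta\cdot\nabla w\bigr),
\end{align*}
the last being the weighted second Green identity between $v$ and $w$. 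Substituting the first two identities into the tested Galerkin equations removes the $\skp{\gamma_1 w,\cdot}$ and $\skp{\gamma_0 v,\cdot}$ pairings, and the third identity makes the remaining pair $\skp{\gamma_1^{\rm ext}v,\eta^2\gamma_0 u_h}$, $\skp{\gamma_0^{\rm int}w,\eta^2\varphi_h}$ cancel one another. Combining with $\skp{\mathbf{C}\nabla u_h,\nabla(\eta^2u_h)}_{L^2(\Omega)}\ge C_{\rm ell}\norm{\eta\nabla u_h}_{L^2(\Omega)}^2-2\norm{\mathbf{C}}_{L^\infty}\norm{\eta\nabla u_h}_{L^2(\Omega)}\norm{u_h\nabla\eta}_{L^2(\Omega)}$, this bounds $C_{\rm ell}\norm{\eta\nabla u_h}_{L^2(\Omega)}^2+\norm{\eta\nabla v}_{L^2(\R^d)}^2+\norm{\eta\nabla w}_{L^2(\R^d\setminus\Gamma)}^2$ by a sum of products of one factor $\eta\nabla(\cdot)$ with one factor $(\cdot)\nabla\eta$, plus $|\mathcal{E}_A|+|\mathcal{E}_B|$. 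Young's inequality with a small parameter (depending on $C_{\rm ell}$ and $\norm{\mathbf{C}}_{L^\infty}$) absorbs the $\eta\nabla(\cdot)$-factors into the left; each leftover obeys $\norm{(\cdot)\nabla\eta}_{L^2}\lesssim(\varepsilon R)^{-1}\norm{\cdot}_{L^2(B_{(1+\varepsilon)R})}\le(\varepsilon R)^{-1}\triplenorm{\cdot}_{h,(1+\varepsilon)R}$; and a separate, routine but lengthy, estimate based on approximation and stability of $I_h,\pi_h$, inverse estimates in $S^{1,1}(\T_h)$ and $S^{0,0}(\mathcal{K}_h)$, trace inequalities for $v,w$, and $h/R<\varepsilon/32$ bounds $|\mathcal{E}_A|+|\mathcal{E}_B|$ by the square of the asserted right-hand side plus absorbable terms. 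Taking square roots and using $\eta\equiv1$ on $B_R$ finishes the proof.

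The step I expect to be the main obstacle is the treatment of the boundary coupling terms, which is also where the argument differs essentially from the FEM-only and BEM-only estimates in \cite{FMP15,AFM20,FMP16,FMP17}. Because $V,K,K',W$ are nonlocal, the boundary pairings cannot be cut off to $B_{(1+\varepsilon)R}$ directly, and commuting $\eta^2$ through the integral operators would leave genuinely global commutator terms; the resolution is precisely to keep these terms as Cauchy data of $v$ and $w$ and to exploit the weighted second Green identity, which forces them to telescope into harmless $\nabla\eta$-weighted volume integrals. A secondary, more technical obstacle is the bookkeeping for $\mathcal{E}_A,\mathcal{E}_B$: these pair traces and conormal derivatives of $v,w$ on $\Gamma\cap B_{(1+\varepsilon)R}$ with interpolation errors supported there, and reducing them to the $h$-weighted norm $\triplenorm{\cdot}_{h,(1+\varepsilon)R}$ rather than to the full $H^1$-norm is where the mesh constraint and the inverse estimates for the discrete spaces enter.
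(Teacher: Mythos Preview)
Your overall strategy coincides with the paper's: the local ellipticity estimate obtained by rewriting the boundary terms as Cauchy data of the harmonic potentials $v=\widetilde V\varphi_h$ and $w=\widetilde K u_h$ and using Green's identities is exactly what the paper does in its Step~1, and your ``weighted second Green identity'' is precisely equation~\eqref{eq:proofCSC3a}. Your three displayed identities are correct, and the telescoping you describe reproduces the paper's reduction to $a_{\rm sym}(u_h,\varphi_h;\eta^2u_h,\eta^2\varphi_h)$ plus $\nabla\eta$-weighted volume terms.

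There is, however, a genuine gap in your treatment of the consistency errors $\mathcal E_A,\mathcal E_B$. Carrying out the super-approximation and inverse estimates carefully (as in \eqref{eq:proofBMcC9}--\eqref{eq:proofBMcC15} and \eqref{eq:proofCSC11}) produces terms of the form
\[
\frac{h}{\varepsilon R}\Bigl(\|\nabla u_h\|^2_{L^2(B_{(1+\varepsilon)R}\cap\Omega)}+\|\nabla v\|^2_{L^2(B_{(1+\varepsilon)R})}+\|\nabla w\|^2_{L^2(B_{(1+\varepsilon)R}\setminus\Gamma)}\Bigr),
\]
i.e., gradients on the \emph{enlarged} box with prefactor $h/(\varepsilon R)$. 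These are not absorbable into $\|\eta\nabla(\cdot)\|^2$ (since $\eta$ vanishes on the annulus $B_{(1+\varepsilon)R}\setminus B_{(1+\varepsilon/2)R}$), and they are not dominated by $(\varepsilon R)^{-2}\triplenorm{(\cdot)}_{h,(1+\varepsilon)R}^2$ either: the latter contains $\frac{h^2}{(\varepsilon R)^2}\|\nabla(\cdot)\|^2$, which is \emph{smaller} than $\frac{h}{\varepsilon R}\|\nabla(\cdot)\|^2$ by the factor $h/(\varepsilon R)<1/32$. So your claim that $|\mathcal E_A|+|\mathcal E_B|$ is bounded by the square of the asserted right-hand side plus absorbable terms does not hold in one pass.

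The remedy, which is the paper's Step~3, is to first record the intermediate inequality with the $\frac{h}{\delta R}\|\nabla(\cdot)\|^2_{B_{(1+\delta)R}}$ terms still present (take $\delta=\varepsilon/2$), and then apply this same inequality once more to those gradient terms on the pair of boxes $B_{(1+\varepsilon/2)R}\subset B_{(1+\varepsilon)R}$. This squares the prefactor to $\frac{h^2}{(\varepsilon R)^2}$, which \emph{is} part of $(\varepsilon R)^{-2}\triplenorm{(\cdot)}^2_{h,(1+\varepsilon)R}$. The constraint $h/R<\varepsilon/32$ is exactly what makes both applications of the intermediate estimate legitimate (it gives $16h\le\delta R$ for $\delta=\varepsilon/2$, which is needed for the invocation of \cite[Lem.~3.8]{FMP17} in the $T_2$ estimate). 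Once you insert this iteration step, your argument is complete and matches the paper's.
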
	

With the bases $\mathcal{B}_h$ of $S^{1,1}(\T_h)$ and $\mathcal{W}_h$ of $S^{0,0}(\mathcal{K}_h)$,
the Galerkin discretization \eqref{eq:symmcoupling} 
leads to a block matrix $\mathbf{A}_{\rm sym} \in \mathbb{R}^ {(n+m) \times( n+m)}$
\begin{align}\label{eq:matrixSymm}
\mathbf{A}_{\rm sym}:= \begin{pmatrix}
\mathbf{A}+\mathbf{W}&\mathbf{K}^T-\frac{1}{2}\mathbf{M}^T\\
\frac{1}{2}\mathbf{M}-\mathbf{K}&\mathbf{V}
\end{pmatrix},
\end{align}
where $\mathbf{A}$, $\mathbf{M}$, $\mathbf{K}$ are defined in \eqref{eq:matrixBMcC}, and 
$\mathbf{W} \in \R^{n\times n}$ is given by $\mathbf{W}_{ij} = \skp{W\xi_j,\xi_i}_{L^2(\Gamma)}$.


\subsection{The Johnson-N\'ed\'elec coupling}
The Johnson-N\'ed\'elec coupling, introduced in \cite{JohnsonNedelec80} 
again uses the representation formula for the exterior solution, but 
differs from the symmetric coupling in the way how the interior and exterior solutions 
are coupled on the boundary. Instead of all four boundary integral operators, only the 
single-layer and the double-layer operator are needed.
The bilinear form for the Johnson-N\'ed\'elec coupling is given by
\begin{subequations}
\begin{alignat}{2}
a_{\rm jn}(u,\varphi;\psi,\zeta) &:=
\skp{\mathbf{C}\nabla u,\nabla \psi}_{L^2(\Omega)} - \skp{\varphi,\psi}_{L^2(\Gamma)} +
\skp{(1/2-K)u,\zeta}_{L^2(\Gamma)} + \skp{V\varphi,\zeta}_{L^2(\Gamma)}, \\
g_{\rm jn}(\psi,\zeta) &:= 
\skp{f,\psi}_{L^2(\Omega)} + \skp{\varphi_0,\psi}_{L^2(\Gamma)} + \skp{(1/2-K)u_0,\zeta}_{L^2(\Gamma)} \nonumber \\
&=: 
\skp{f,\psi}_{L^2(\Omega)} + \skp{\varphi_0,\psi}_{L^2(\Gamma)} + \skp{w_0,\zeta}_{L^2(\Gamma)}.
\end{alignat}
\end{subequations}
The Galerkin discretization in $S^{1,1}(\mathcal{T}_h) \times S^{0,0}( \mathcal{\mathcal{K}}_h)$
leads to the problem of finding $(u_h,\varphi_h) \in S^{1,1}(\mathcal{T}_h) \times S^{0,0}( \mathcal{\mathcal{K}}_h)$
such that
\begin{subequations}\label{eq:JN}
\begin{alignat}{2}
\skp{\mathbf{C}\nabla u_h,\nabla \psi_h}_{L^2(\Omega)} - \skp{\varphi_h,\psi_h}_{L^2(\Gamma)}  &= 
\skp{f,\psi_h}_{L^2(\Omega)} + \skp{\varphi_0,\psi_h}_{L^2(\Gamma)} \quad \forall \psi_h \in S^{1,1}(\mathcal{T}_h), \\
\skp{(1/2-K)u_h,\zeta_h}_{L^2(\Gamma)} + \skp{V\varphi_h,\zeta_h}_{L^2(\Gamma)} &= 
\skp{(1/2-K)u_0,\zeta_h}_{L^2(\Gamma)} \quad \!\! \forall \zeta_h \in S^{0,0}( \mathcal{\mathcal{K}}_h).
\end{alignat}
\end{subequations}

As in the case of the Bielak-MacCamy coupling, the Johnson-N\'ed\'elec coupling has an unique solution 
provided $C_{\rm ell}>1/4$, see \cite{AFFKMP13}. \\

The following theorem gives the analogous result to Theorem~\ref{th:CaccioppoliBMcC} and 
Theorem~\ref{th:CaccioppoliSymm} for the Johnson-N\'ed\'elec coupling. Similarly to the symmetric coupling, 
we simultaneously control a stronger norm of the interior solution and both layer potentials by a 
weaker norm on a larger domain.

\begin{theorem}\label{th:CaccioppoliJN}
Assume that $C_{\rm ell} > 1/4$ in (\ref{eq:Cell}).
Let $\varepsilon \in (0,1) $ and $R \in (0,2\diam(\Omega)) $ be such that $\frac{h}{R} < \frac{\varepsilon}{32}$, 
and let $B_R$ and $B_{(1+\varepsilon)R}$ be two concentric boxes. Assume that the data is localized away from 
$B_{(1+\varepsilon)R}$, i.e., 
$(\operatorname*{supp} f\cup \operatorname*{supp} \varphi_0 \cup \operatorname*{supp} (1/2-K)u_0) \cap B_{(1+\varepsilon)R} = \emptyset$.
Then, there exists a constant $C$ depending only on $\Omega$, $d$, 
and the $\gamma$-shape regularity of the quasi-uniform triangulation $\T_h$, such that for the solution  
$(u_h,\varphi_h)$ of \eqref{eq:symmcoupling}  we have
\begin{align}
\left\| \nabla u_h \right\|_ {L^2(B_{R}\cap\Omega)} + \left\| \nabla \widetilde{V} \varphi_h \right\|_ {L^2(B_{R})} +
\left\| \nabla \widetilde{K}u_h \right\|_ {L^2(B_{R}\backslash\Gamma)} \le C
\frac{R}{(\varepsilon R)^2} 
\triplenorm{(u_h,  \widetilde{V}\varphi_h, \widetilde{K}u_h )  } _{h,( 1+\varepsilon) R},
\end{align}
where the norm on the right-hand side is defined in \eqref{eq:def:triplenormVec}.
\end{theorem}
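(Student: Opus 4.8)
The plan is to mimic the argument that (the excerpt promises) establishes Theorems~\ref{th:CaccioppoliBMcC} and~\ref{th:CaccioppoliSymm}, adapting it to the Johnson–N\'ed\'elec bilinear form. The starting point is a Galerkin orthogonality localized to the annular region between the two concentric boxes: fix a cut-off function $\eta \in C^\infty(\R^d)$ with $\eta \equiv 1$ on $B_R$, $\eta \equiv 0$ outside $B_{(1+\varepsilon)R}$, and $\|\nabla^k \eta\|_\infty \lesssim (\varepsilon R)^{-k}$. Since the data is supported away from $B_{(1+\varepsilon)R}$, testing \eqref{eq:JN} with a discrete test pair that approximates $(\eta^2 u_h, \eta^2 \widetilde V\varphi_h$-type objects restricted to the interior/boundary$)$ produces, modulo the discretization error of replacing $\eta^2(\cdot)$ by its Scott–Zhang (or $L^2$-) projection onto $S^{1,1}(\T_h)\times S^{0,0}(\mathcal K_h)$, an identity of the form ``coercive bulk terms $=$ commutator terms involving $\nabla\eta$.'' The hypothesis $h/R < \varepsilon/32$ is exactly what is needed so that this projection error is absorbed: the projection of a function supported in $B_{(1+\varepsilon)R}$ lives in a slightly fattened box still disjoint from the data, and the interpolation error is $O(h/(\varepsilon R))$ relative to the weighted norm.

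Next I would exploit the structure of $a_{\rm jn}$. The bulk term $\skp{\mathbf C\nabla u_h,\nabla(\eta^2 u_h)}_{L^2(\Omega)}$ gives, after the Leibniz rule, $\|\eta\nabla u_h\|_{L^2(\Omega)}^2$ (times $C_{\rm ell}$) minus a term $\lesssim \|\nabla\eta\,\eta\, u_h\|\,\|\eta\nabla u_h\|$, which is handled by Young's inequality. The two layer-potential contributions are treated via the jump relations: $\widetilde V\varphi_h$ is harmonic in $\Omega$ and in $\Omega^{\rm ext}$ with $[\partial_\nu \widetilde V\varphi_h] = -\varphi_h$ and $[\widetilde V\varphi_h]=0$, while $\widetilde K u_h$ is harmonic off $\Gamma$ with $[\widetilde K u_h] = u_h$ (trace) and $[\partial_\nu \widetilde K u_h]=0$. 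Integrating $\nabla(\widetilde V\varphi_h)\cdot\nabla(\eta^2 \widetilde V\varphi_h)$ and $\nabla(\widetilde K u_h)\cdot\nabla(\eta^2\widetilde K u_h)$ over $B_{(1+\varepsilon)R}\setminus\Gamma$ and using harmonicity turns the bulk Dirichlet energies into boundary integrals on $\Gamma$ plus commutators; these boundary integrals reassemble — using $V$, $K$, the mass term, and the jump relations — precisely into the cross terms $\skp{(1/2-K)u_h,\varphi_h\text{-part}}_{L^2(\Gamma)}$ and $\skp{V\varphi_h,\cdot}$ appearing in $a_{\rm jn}$, together with the symmetric defect terms. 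Here the asymmetry of the Johnson–N\'ed\'elec form (the $\skp{\varphi_h,\psi_h}$ block is not balanced by $+\skp{u_h,\zeta_h}$ but by $\skp{(1/2-K)u_h,\zeta_h}$) is what forces the extra factor $R/(\varepsilon R)^2 = 1/(\varepsilon^2 R)$ rather than $1/(\varepsilon R)$: one loses a power of $\varepsilon R$ because the coercivity of the coupled form is only ``up to a compact/stabilization perturbation'' (as recalled from \cite[AFFKMP13]{AFFKMP13} under $C_{\rm ell}>1/4$), and restoring it on the localized level costs an additional inverse-estimate step on $S^{0,0}(\mathcal K_h)$, $\|\varphi_h\|_{L^2(\Gamma)}\lesssim h^{-1/2}\|\widetilde V\varphi_h\|_{H^{1/2}}\lesssim (\varepsilon R)^{-1}\triplenorm{\widetilde V\varphi_h}$-type bounds, combined with the mismatch in the scaling of the two block equations.

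Concretely the step order is: (i) choose $\eta$ and the discrete test functions; (ii) write localized Galerkin orthogonality and absorb the projection error using $h/R<\varepsilon/32$; (iii) expand the bulk FEM term, pick up $C_{\rm ell}\|\eta\nabla u_h\|^2_{L^2(\Omega)}$ on the left; (iv) expand the two potential Dirichlet energies, use harmonicity + jump relations to convert to boundary terms and recognize them inside $a_{\rm jn}$; (v) collect all commutators involving $\nabla\eta$ and bound them by $(\varepsilon R)^{-1}$ times the weighted norms $\triplenorm{(u_h,\widetilde V\varphi_h,\widetilde K u_h)}_{h,(1+\varepsilon)R}$, using also an inverse inequality on $\Gamma$ to control $\|\varphi_h\|_{L^2(\Gamma)}$; (vi) absorb, divide, and note that the non-definite coupling contributions cost the extra $1/(\varepsilon R)$ factor, yielding the stated bound $C R/(\varepsilon R)^2$. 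The main obstacle I anticipate is step~(iv)–(v) for the $\widetilde K u_h$ term: because $\widetilde K u_h$ jumps across $\Gamma$, the cut-off $\eta$ interacts with the jump, and one must be careful that the boundary terms produced on $\Gamma\cap B_{(1+\varepsilon)R}$ really do match the $\skp{(1/2-K)u_h,\zeta_h}$ and $\skp{Wu_h,\psi_h}$-type expressions after using $W = -\partial_\nu \widetilde K$ and the Calder\'on identities — and that the ``missing'' hyper-singular block (absent in Johnson–N\'ed\'elec but present in the symmetric coupling) is precisely what degrades the power of $\varepsilon$. Controlling that mismatch without losing an extra factor of $h$ is the delicate point, and it is where the hypothesis $C_{\rm ell}>1/4$ re-enters to guarantee the localized coercivity after the stabilization perturbation is accounted for.
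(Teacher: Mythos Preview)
Your outline captures the general Caccioppoli machinery correctly, and you have correctly identified the obstacle: the hyper-singular operator $W$ is absent from $a_{\rm jn}$, so the boundary term $\skp{Wu_h,\eta^2 u_h}_{L^2(\Gamma)}$ that arises when you integrate $\|\nabla(\eta\widetilde K u_h)\|^2_{L^2(\R^d\setminus\Gamma)}$ by parts has nothing in the bilinear form to cancel against. But your proposal does not actually resolve this obstacle --- you say ``one must be careful that the boundary terms \ldots really do match the $\skp{(1/2-K)u_h,\zeta_h}$ and $\skp{Wu_h,\psi_h}$-type expressions,'' when the whole point is that they \emph{cannot} match, because $\skp{Wu_h,\psi_h}$ simply does not appear in the Johnson--N\'ed\'elec form. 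Step~(iv) of your plan would therefore stall.

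The paper's remedy is a different idea that you are missing: it does \emph{not} try to extract $\|\nabla(\eta\widetilde K u_h)\|^2$ from the bilinear form at all. Instead it bounds $\|\nabla\widetilde K u_h\|_{L^2(B_R\setminus\Gamma)}$ directly in terms of $\|\eta u_h\|_{H^1(\Omega)}$ via a near-field/far-field splitting $\widehat\eta\,\widetilde K u_h = \widehat\eta\,\widetilde K(\eta u_h) + \widehat\eta\,\widetilde K((1-\eta)u_h)$. The near-field is controlled by the mapping property $\widetilde K:H^{1/2}(\Gamma)\to H^1_{\rm loc}$; the far-field is smooth (since $\widehat\eta(1-\eta)\equiv 0$) and is handled by harmonicity and integration by parts with no boundary contribution. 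This produces a bound with a prefactor $\sqrt{1+1/\delta}$, and it is \emph{this} prefactor --- not an inverse estimate on $S^{0,0}(\mathcal K_h)$ or a stabilization argument --- that accounts for the degraded constant $R/(\varepsilon R)^2$. After this step the $\widetilde K u_h$ term is slaved to the FEM energy, and the remaining local-ellipticity and orthogonality arguments proceed essentially as for Bielak--MacCamy (using $C_{\rm ell}>1/4$), with the additional observation that $\skp{\nabla(\eta^2\widetilde V\varphi_h),\nabla\widetilde K u_h}_{L^2(\R^d\setminus\Gamma)}=0$ by integration by parts and the jump relations.
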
	

With the bases $\mathcal{B}_h$ of $S^{1,1}(\T_h)$ and $\mathcal{W}_h$ of $S^{0,0}(\mathcal{K}_h)$,
the Galerkin discretization \eqref{eq:JN} leads to a matrix 
$\mathbf{A}_{\rm jn} \in \mathbb{R}^ {(n+m) \times( n+m)}$
\begin{align}\label{eq:matrixJN}
\mathbf{A}_{\rm jn}:= \begin{pmatrix}
\mathbf{A} & -\mathbf{M}^T\\
\frac{1}{2}\mathbf{M}-\mathbf{K}&\mathbf{V}
\end{pmatrix},
\end{align}
where $\mathbf{A}$, $\mathbf{M}$, $\mathbf{K}$, $\mathbf{V}$ are defined in \eqref{eq:matrixBMcC}.

		
\subsection{$\H$-Matrix approximation of inverses}
As a consequence of the Caccioppoli-type inequalities, we are able to prove the existence 
of $\H$-matrix approximants to the inverses of the stiffness matrices corresponding to 
the discretized FEM-BEM couplings.

We briefly introduce the matrix compression format of $\mathcal{H}$-matrices. For more detailed information,
we refer to \cite{Hackbusch99,BebendorfBuch,HackbuschBuch,BoermBuch}.

The main idea of $\mathcal{H}$-matrices is to store certain far field blocks of the matrix 
efficiently as a low-rank matrix. In order to choose blocks that are suitable for compression, 
we need to introduce the concept of admissibility. 

\begin{definition}[bounding boxes and $\eta$-admissibility]
\label{def:admissibility}
A cluster $\tau$ is a subset of the index set 
	$\mathcal{I}=\{1,2,...,n+m\}$. 
	For a cluster $\tau \subset \mathcal{I}$, the axis-parallel $B_{R_\tau} \subseteq \mathbb{R}^d$ is called a bounding box 
	if $B_{R_\tau}$ is a hyper cube with side length $R_\tau$ and 
	$\cup _ {i \in \tau} \operatorname*{supp} \xi _i \subseteq B_{R_\tau}$ 
	as well as 	$\cup _ {i \in \tau} \operatorname*{supp} \chi _i \subseteq B_{R_\tau}$.
\\
For $\eta >0$, a pair of clusters $(\tau , \sigma)$ with 
$\tau , \sigma \subset \mathcal{I}$ is called $\eta$-admissible if 
there exist bounding boxes $B_{R_\tau}$ and $B_{R_\sigma}$ such that 
\begin{align*}
\min \{\operatorname*{diam}(B_{R_\tau}), \operatorname*{diam}(B_{R_\sigma})\} \leq 
\eta \, \operatorname*{dist} (B_{R_\tau},B_{R_\sigma}).
\end{align*}
\end{definition}

\begin{remark}
Definition~\ref{def:admissibility} clusters the degrees of freedom associated with 
triangulation ${\mathcal T}_h$ of $\Omega$ and the triangulation ${\mathcal K}_h$ 
of $\Gamma$ simultaneously.
\eremk
\end{remark}
	
The block-partition of $\mathcal{H}$-matrices is based on so-called cluster trees.

\begin{definition}[cluster tree]
A \emph{cluster tree} with \emph{leaf size} $n_{\rm leaf} \in \mathbb{N}$ is a binary tree 
$\mathbb{T}_{\mathcal{I}}$ with root $\mathcal{I}$  
such that each cluster $\tau \in \mathbb{T}_{\mathcal{I}}$ is either a leaf of the tree 
and satisfies $\abs{\tau} \leq n_{\rm leaf}$, or there exist disjoint subsets 
$\tau'$, $\tau'' \in \mathbb{T}_{\mathcal{I}}$ of $\tau$, so-called sons, with 
$\tau = \tau' \cup \tau''$. 
Here and below, $\abs{\tau}$ denotes the cardinality of the finite set $\tau$.
The \emph{level function} ${\rm level}: \mathbb{T}_{\mathcal{I}} \rightarrow \mathbb{N}_0$ 
is inductively defined by 
${\rm level}(\mathcal{I}) = 0$ and ${\rm level}(\tau') := {\rm level}(\tau) + 1$ for $\tau'$ a son of $\tau$. 
The \emph{depth} of a cluster tree
is ${\rm depth}(\mathbb{T}_{\mathcal{I}}) := \max_{\tau \in \mathbb{T}_{\mathcal{I}}}{\rm level}(\tau)$.  
\end{definition}

\begin{definition}[far field, near field, and sparsity constant]
 A partition $P$ of $\mathcal{I} \times \mathcal{I}$ is said to be based on the cluster tree 
 $\mathbb{T}_{\mathcal{I}}$, 
if $P \subset \mathbb{T}_{\mathcal{I}}\times\mathbb{T}_{\mathcal{I}}$. For such a partition $P$ 
and a fixed admissibility parameter $\eta > 0$, we define the \emph{far field} and the \emph{near field} 
as 
\begin{equation}\label{eq:farfield}
P_{\rm far} := \{(\tau,\sigma) \in P \; : \; (\tau,\sigma) \; \text{is $\eta$-admissible}\}, 
\quad P_{\rm near} := P\setminus P_{\rm far}.
\end{equation}
The \emph{sparsity constant} $C_{\rm sp}$ of such a partition 
was introduced in \cite{HackbuschKhoromskij2000a,GrasedyckDissertation} as 
\begin{equation}\label{eq:sparsityConstant}
C_{\rm sp} := \max\left\{\max_{\tau \in \mathbb{T}_{\mathcal{I}}}
\abs{\{\sigma \in \mathbb{T}_{\mathcal{I}} \, : \, \tau \times \sigma \in P_{\rm far}\}},
\max_{\sigma \in \mathbb{T}_{\mathcal{I}}}\abs{\{\tau \in \mathbb{T}_{\mathcal{I}} \, : \, 
\tau \times \sigma \in P_{\rm far}\}}\right\}.
\end{equation}
\end{definition}

\begin{definition}[$\mathcal{H}$-matrices]
Let $P$ be a partition of $\mathcal{I} \times \mathcal{I}$ that is based on a cluster tree $\mathbb{T}_{\mathcal{I}}$ 
and $\eta >0$. 
A matrix $\mathbf{A}  \in \mathbb{R} ^ {(n+m)\times(n\times m)}$ is an $\mathcal{H}$-matrix with blockwise rank $r$, if for every
$\eta$-admissible block $(\tau , \sigma) \in P_{\rm far}$, we have a low-rank factorization 
$$\mathbf{A}|_{\tau \times \sigma}=\mathbf{X}_{\tau \sigma}  \mathbf{Y}^ T_{\tau \sigma},$$
where $\mathbf{X}_{\tau \sigma} \in \mathbb{R} ^ {|\tau|\times r}$ and  
$\mathbf{Y}_{\tau \sigma} \in \mathbb{R} ^ {|\sigma|\times r }$.
\end{definition}

Due to the low-rank structure on far-field blocks, the memory requirement to store an  
$\mathcal{H}$ matrix is given by $\sim C_{\rm sp}\operatorname*{depth}(\mathbb{T}_{\mathcal{I}})r(n+m)$. Provided 
$C_{\rm sp}$ is bounded and the cluster tree is balanced, i.e., 
$\operatorname*{depth}(\mathbb{T}_{\mathcal{I}})\sim \log(n+m)$, which can be ensured by suitable clustering methods 
(e.g. geometric clustering, \cite{HackbuschBuch}), we get a storage complexity of 
$\mathcal{O}(r(n+m)\log(n+m))$. \\

The following theorem shows that the inverse matrices 
$\mathbf{A}_{\rm bmc}^{-1}$, $\mathbf{A}_{\rm sym}^{-1}$, and $\mathbf{A}_{\rm jn}^{-1}$
corresponding to the three mentioned FEM-BEM couplings
can be approximated in the $\mathcal{H}$-matrix format, and the error 
converges exponentially in the maximal block rank employed.

\begin{theorem}\label{th:H-Matrix approximation of inverses}
For a fixed admissibility parameter $\eta >0$, let a partition 
$P$ of $\mathcal{I} \times \mathcal{I}$ that is based on the cluster tree $\mathbb{T}_{\mathcal{I}}$ be given.
Then, there exists an $\mathcal{H}$-matrix $\mathbf{B}_{\mathcal{H}}$ 
with maximal blockwise rank $r$ such that 
\begin{align*}
\left\|\mathbf{A}_{\rm bmc}^{-1} -\mathbf{B}_{\mathcal{H}}
\right\|_2 \le C_{\rm  apx} C_{\rm sp} \operatorname*{depth}(\mathbb{T}_{\mathcal{I}}) 
h^{-(2+d)} e^{-br^{1/(2d+1)}}
\end{align*}
for the Bielak-MacCamy coupling. In the same way, there exists a blockwise rank-$r$
$\mathcal{H}$-matrix $\mathbf{B}_{\mathcal{H}}$  such that 
\begin{align*}
\left\|\mathbf{A}_{\rm sym}^{-1} -\mathbf{B}_{\mathcal{H}}
\right\|_2 \le C_{\rm  apx} C_{\rm sp} \operatorname*{depth}(\mathbb{T}_{\mathcal{I}}) 
h^{-(2+d)} e^{-br^{1/(3d+1)}}
\end{align*}
for the symmetric coupling and 
\begin{align*}
\left\|\mathbf{A}_{\rm jn}^{-1} -\mathbf{B}_{\mathcal{H}}
\right\|_2 \le C_{\rm  apx} C_{\rm sp} \operatorname*{depth}(\mathbb{T}_{\mathcal{I}}) 
h^{-(2+d)} e^{-br^{1/(6d+1)}}
\end{align*}
for the Johnson-N\'ed\'elec coupling. The constants 
$C_{\rm  apx}>0$ and $b>0$ depend only on 
$\Omega$, $d$, $\eta$, and the $\gamma$-shape regularity of the quasi-uniform triangulations 
$\mathcal{T}_h$ and $\mathcal{K}_h$.
\end{theorem}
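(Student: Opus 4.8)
The plan is to follow the now-standard strategy (cf.\ \cite{FMP15,FMP16,FMP17}) for passing from Caccioppoli-type estimates to $\mathcal{H}$-matrix approximability of inverses, adapted to the coupled block structure. The key object is an admissible block $(\tau,\sigma) \in P_{\rm far}$ with bounding boxes $B_{R_\tau}$, $B_{R_\sigma}$ satisfying $\min\{\diam B_{R_\tau},\diam B_{R_\sigma}\} \le \eta\,\dist(B_{R_\tau},B_{R_\sigma})$. Fix a right-hand-side vector supported (in the sense of basis functions) in $\sigma$; writing $\mathbf{b}$ for this vector and $\mathbf{x} = \mathbf{A}_{\rm bmc}^{-1}\mathbf{b}$, one identifies $\mathbf{x}$ with a Galerkin solution $(u_h,\varphi_h)$ of \eqref{eq:BMcC} whose data (after applying the basis-to-function isomorphisms $\Phi$, $\Psi$ and the relevant duality) is localized away from $B_{R_\tau}$, or more precisely away from a slightly enlarged box. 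I would then set up a nested sequence of concentric boxes $B_{R_\tau} =: B_0 \subset B_1 \subset \cdots \subset B_L$ with geometrically increasing radii, all still separated from $\sigma$ by the admissibility condition, and iterate the relevant discrete Caccioppoli estimate (Theorem~\ref{th:CaccioppoliBMcC} for $\mathbf{A}_{\rm bmc}$, Theorem~\ref{th:CaccioppoliSymm} for $\mathbf{A}_{\rm sym}$, Theorem~\ref{th:CaccioppoliJN} for $\mathbf{A}_{\rm jn}$) together with an $L^2$-orthogonal (or Scott--Zhang-type) projection/interpolation step onto a coarser space; the combination of these two ingredients yields, on $B_0$, approximation of the restricted Galerkin solution by a space of dimension polynomial in $L$, with an error contracting like $q^L$ for some $q<1$ determined by $\varepsilon$ and the ratio of box sizes. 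This is the content of an abstract lemma which the paper promises in Chapter~\ref{sec:proofHmatrix}, and I would phrase the argument to invoke exactly that framework.

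Concretely, the Caccioppoli estimates control $\|\nabla u_h\|_{L^2(B_R\cap\Omega)} + \|\nabla\widetilde V\varphi_h\|_{L^2(B_R)}$ (and, for the symmetric and Johnson--N\'ed\'elec couplings, also $\|\nabla\widetilde K u_h\|_{L^2(B_R\setminus\Gamma)}$) by the weaker $h$-weighted triple norm on $B_{(1+\varepsilon)R}$. Because the potentials $\widetilde V\varphi_h$ and $\widetilde K u_h$ are (piecewise) harmonic away from $\Gamma$, interior regularity of harmonic functions upgrades these gradient bounds to control of higher derivatives, so that a single step of the iteration gains a fixed factor of compression at the price of enlarging the box by $(1+\varepsilon)$. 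After $L$ steps the approximation space for the traces of $(u_h,\widetilde V\varphi_h,\widetilde K u_h)$ on $B_0$ has dimension $\mathcal O(L^{d})$ for the FEM/BEM part (and correspondingly $\mathcal O(L^{2d})$ or $\mathcal O(L^{3d})$ once the double-layer and hyper-singular couplings force additional nested iterations), with error $\lesssim q^{L}$. Choosing $L \sim r^{1/(2d+1)}$ (resp.\ $r^{1/(3d+1)}$, $r^{1/(6d+1)}$) balances dimension against accuracy and produces the stated exponents in the exponential; the low-rank factorization $\mathbf{X}_{\tau\sigma}\mathbf{Y}^T_{\tau\sigma}$ of the block $\mathbf{A}^{-1}|_{\tau\times\sigma}$ is then obtained by expressing the approximate solution operator in the chosen finite-dimensional space and transporting back through $\Phi$, $\Psi$ and the mass matrices, the $h$-powers in the norm equivalences \eqref{eq:basisa}, \eqref{eq:basisb} accounting for the $h^{-(2+d)}$ prefactor. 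Summing the blockwise errors over the partition, with the number of admissible blocks touching a fixed cluster bounded by $C_{\rm sp}$ and the number of levels by $\operatorname{depth}(\mathbb{T}_{\mathcal I})$, yields the spectral-norm bound in the theorem.

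The different exponents $1/(2d+1)$, $1/(3d+1)$, $1/(6d+1)$ for the three couplings reflect how many interdependent quantities must be approximated simultaneously and how the coupling terms propagate information: in the Bielak--MacCamy case only $u_h$ and one potential $\widetilde V\varphi_h$ are coupled, whereas the symmetric coupling additionally involves $\widetilde K u_h$ through the hyper-singular operator, and the Johnson--N\'ed\'elec coupling, lacking the stabilizing hyper-singular term, requires a weaker (quadratic in $1/(\varepsilon R)$) Caccioppoli estimate (Theorem~\ref{th:CaccioppoliJN}) and hence a longer iteration to recover the same accuracy. Technically, the step I expect to be the main obstacle is the bookkeeping that links the \emph{matrix} block $(\tau,\sigma)$, defined purely through index sets and bounding boxes, to the \emph{function-space} hypotheses of the Caccioppoli theorems: one must verify that a right-hand side living in $\sigma$ really does produce Galerkin data whose supports avoid $B_{(1+\varepsilon)R_\tau}$, handle the nonlocal coupling operators $K'$, $V$, $W$, $K$ appearing on $\Gamma$ (whose contributions to the right-hand side are \emph{not} localized even when the original data is), and control the mismatch between the $\mathbb{R}^{n+m}$ Euclidean norm and the natural $H^1(\Omega)\times H^{-1/2}(\Gamma)$ norms via \eqref{eq:basisa}--\eqref{eq:basisb}. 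Once this dictionary is in place, the geometric iteration and the choice of $L$ are essentially mechanical, and the abstract low-rank lemma of Chapter~\ref{sec:proofHmatrix} does the rest.
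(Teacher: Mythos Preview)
Your proposal follows the paper's approach closely and correctly identifies the essential mechanism: iterate the Caccioppoli estimate together with a coarse-grid (Scott--Zhang) projection on nested boxes, translate the resulting low-dimensional function-space approximation into a low-rank matrix block via a local dual basis, and sum over the partition using \eqref{eq:refinedblockestimate}. A couple of points are slightly off, though.

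First, the single-step gain does \emph{not} come from ``interior regularity of harmonic functions upgrading gradient bounds to higher derivatives''. In the paper's Lemma~\ref{H-Matrix Approximation-abstract}, the contraction factor $1/2$ arises by combining the Caccioppoli bound (which controls $\|\nabla \mathbf v\|_{L^2(\CB^{\delta/2})}$ by $\delta^{-\alpha}\diam(\CB)^{\alpha-1}\triplenorm{\mathbf v}_{\CB^\delta}$) with a Scott--Zhang projection $Q_H$ onto a grid of width $H\sim \delta^\alpha/\diam(\CB)^{\alpha-1}$; no higher derivatives are invoked, and in particular the interior component $u_h$ is not harmonic. Second, your intermediate dimension counts $\mathcal O(L^d)$, $\mathcal O(L^{2d})$, $\mathcal O(L^{3d})$ are inconsistent with the exponents you (correctly) quote at the end: the actual single-step space has dimension $\sim(\diam/\delta)^{\alpha M d}$ because $M$ functions ($M=2$ for BMC, $M=3$ otherwise) must be approximated simultaneously on a $d$-dimensional box, and summing over $L$ steps yields $L^{\alpha Md+1}$, i.e.\ $L^{2d+1}$, $L^{3d+1}$, $L^{6d+1}$ (Lemma~\ref{multi step approximation}). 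Finally, the nonlocality worry you flag is real and is resolved in the paper not by arguing about $K$, $W$, etc., but by choosing the dual basis in $\mathbf L^2=L^2(\Omega)\times L^2(\Gamma)\times L^2(\Gamma)$ so that $\Lambda\mathbf x$ directly produces localized data $(f,v_0,w_0)$; the Caccioppoli hypotheses are stated in terms of these renamed data precisely to make this step automatic.
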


\section{The Caccioppoli-type inequalities}\label{sec:proofsCacc}
In this section, we provide the proofs of the interior regularity estimates of
Theorems~\ref{th:CaccioppoliBMcC}--\ref{th:CaccioppoliJN}.

We start with some well-known facts about the volume potential operators $\widetilde{V}$, $\widetilde K$ and the boundary integral operators
$V,K,K',W$.
For details, we refer to \cite[Ch. 3]{SauterSchwab} and \cite[Ch. 6]{steinbach2007numerical}. 

\begin{itemize}
 \item With the interior trace operator $\gamma_0^{\rm int}$ (for $\Omega$) and exterior trace operator $\gamma_0^{\rm ext}$ (for $\R^d\backslash \overline{\Omega}$), we have 
\begin{align}\label{eq:tracePotentials}
&\gamma_0^{\rm int} \widetilde{V}\varphi = V\varphi = \gamma_0^{\rm ext} \widetilde{V}\varphi, \nonumber\\
&\gamma_0^{\rm int} \widetilde{K}u = (-1/2 + K)u \qquad
\text{and} \qquad \gamma_0^{\rm ext} \widetilde{K}u = (1/2 + K) u,
\end{align}
which implies the jump conditions across $\Gamma$
 \begin{align}
\label{eq:jump-relation-1}
[\gamma_0 \widetilde{V}\varphi] := 
\gamma_0^{\rm ext} \widetilde{V}\varphi-\gamma_0^{\rm int} \widetilde{V}\varphi = 0, \qquad 
[\gamma_0 \widetilde{K}u] = u.
 \end{align}
 \item Similarly, with the interior $\gamma_1^{\rm int} u := \gamma_0^{\rm int}\nabla u \cdot \nu$ 
and exterior conormal derivative 
$\gamma_1^{\rm ext} u := \gamma_0^{\rm ext}\nabla u \cdot \nu$ ($\nu$ is the 
outward normal vector of $\Omega$), we have 
\begin{align}\label{eq:conormalPotentials}
&\gamma_1^{\rm int} \widetilde{V}\varphi = (1/2+K')\varphi \qquad \text{and} \qquad \gamma_1^{\rm ext} \widetilde{V}\varphi = (-1/2 + K')\varphi, \nonumber\\
&\gamma_1^{\rm int} \widetilde{K}u = -Wu = \gamma_1^{\rm ext} \widetilde{K}u, 
\end{align}
and consequently the jump conditions
 \begin{align}
\label{eq:jump-relation-2}
[\gamma_1 \widetilde{V}\varphi] := 
\gamma_1^{\rm ext} \widetilde{V}\varphi-\gamma_1^{\rm int} \widetilde{V}\varphi = -\varphi, \qquad 
[\gamma_1 \widetilde{K}u] = 0.
 \end{align}
 \item The potentials $\widetilde{V} \varphi$ and $\widetilde{K}u$ are harmonic in $\R^d \backslash \Gamma$ and are bounded 
 operators (see \cite[Ch.~3.1.2]{SauterSchwab})
\begin{align}\label{eq:mapping-Ktilde}
\widetilde V : H^{-1/2+s}(\Gamma) \rightarrow H_{\rm loc}^{1+s}(\R^d), \qquad 
\widetilde K : H^{1/2+s}(\Gamma) \rightarrow H_{\rm loc}^{1+s}(\R^d\backslash\Gamma), \qquad \abs{s}\leq 1/2.
\end{align}
Consequently, we have the boundedness for the boundary integral operators as
\begin{align}\label{eq:mapping-K}
V : H^{-1/2+s}(\Gamma) \rightarrow H^{1/2+s}(\Gamma), \quad 
K : H^{1/2+s}(\Gamma) \rightarrow H^{1/2+s}(\Gamma), \quad 
W : H^{1/2+s}(\Gamma) \rightarrow H^{-1/2+s}(\Gamma)
\end{align}
for $s\in\R$ with $\abs{s}\leq 1/2$.
\end{itemize}

In the following, the notation $\lesssim$ abbreviates $\leq$ up to a 
constant $C>0$ which depends only on $\Omega$, the dimension $d$, and the
$\gamma$-shape regularity of $\mathcal{T}_h$. 
Moreover, we use $\simeq$ to indicate that both estimates
$\lesssim$ and $\gtrsim$ hold.

\subsection{The Bielak-MacCamy coupling}
This section is dedicated to the proof of Theorem~\ref{th:CaccioppoliBMcC}.
The techniques employed are fairly similar to \cite{FMP15,FMP16}, where Caccioppoli-type 
estimates for FEM and BEM are proven. Nonetheless, in the case of the FEM-BEM couplings, the 
additional terms in the bilinear forms arising from the coupling on the boundary 
need to be treated carefully.

We start with a classical approximation result, so-called super-approximation, see, e.g., 
\cite{nitsche-schatz74,wahlbin91}. 

\begin{lemma}\label{lem:superapprox}
Let $I_h^{\Gamma}:L^2(\Gamma) \rightarrow S^{0,0}(\mathcal{K}_h)$ 
be the $L^2(\Gamma)$-orthogonal projection. Then, there is $C > 0$ depending only on the 
shape-regularity of the triangulation and $\Gamma$ such that 
for any discrete function
$\psi_h \in S^{0,0}(\mathcal{K}_h)$ and any $\eta \in W^{1,\infty}(\Gamma)$ 
\begin{align}
 \norm{\eta\psi_h - I_h^{\Gamma}(\eta \psi_h)}_{H^{-1/2}(\Gamma)} \leq C h^{3/2} 
 \norm{\nabla \eta}_{L^{\infty}(\Gamma)}\norm{\psi_h}_{L^2(\Gamma\cap \operatorname*{supp}(\eta))}.
\end{align}
\end{lemma}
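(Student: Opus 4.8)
\textbf{Proof plan for Lemma~\ref{lem:superapprox}.}
The plan is to reduce the global estimate to a sum of local estimates over the boundary elements $K \in \mathcal{K}_h$ and exploit that $I_h^{\Gamma}$ acts elementwise as the $L^2(K)$-projection onto $P_0(K)$, i.e., onto the constants. The starting point is the standard duality characterization of the negative-order norm,
\begin{align*}
\norm{\eta\psi_h - I_h^{\Gamma}(\eta\psi_h)}_{H^{-1/2}(\Gamma)}
= \sup_{0 \neq v \in H^{1/2}(\Gamma)} \frac{\skp{\eta\psi_h - I_h^{\Gamma}(\eta\psi_h), v}_{L^2(\Gamma)}}{\norm{v}_{H^{1/2}(\Gamma)}}.
\end{align*}
Since $I_h^{\Gamma}$ is the $L^2(\Gamma)$-orthogonal projection, we may subtract $I_h^{\Gamma} v$ from $v$ in the numerator for free, so the numerator equals $\skp{\eta\psi_h - I_h^{\Gamma}(\eta\psi_h), v - I_h^{\Gamma} v}_{L^2(\Gamma)}$. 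This last expression splits as a sum over elements $K$, on each of which both factors have vanishing mean.

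The second step is the local analysis on a fixed $K \in \mathcal{K}_h$ with $K \cap \operatorname{supp}(\eta) \neq \emptyset$. On such $K$ I would write $\eta\psi_h - I_h^{\Gamma}(\eta\psi_h) = (\eta - \overline\eta_K)\psi_h - I_h^{\Gamma}((\eta - \overline\eta_K)\psi_h)$, where $\overline\eta_K$ is the mean of $\eta$ on $K$ (subtracting a constant is absorbed by the projection since $\psi_h|_K$ is constant and the projection of a constant times a constant is itself — more carefully, $I_h^\Gamma(\overline\eta_K \psi_h)|_K = \overline\eta_K \psi_h|_K$). Then $\norm{(\eta - \overline\eta_K)\psi_h}_{L^2(K)} \le \norm{\eta - \overline\eta_K}_{L^\infty(K)}\norm{\psi_h}_{L^2(K)} \lesssim h \norm{\nabla\eta}_{L^\infty(K)}\norm{\psi_h}_{L^2(K)}$ by a Poincaré/mean-value estimate on an element of diameter $\lesssim h$, and the $L^2(K)$-stability of the local projection gives the same bound for $I_h^\Gamma((\eta-\overline\eta_K)\psi_h)$. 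Combining, $\norm{\eta\psi_h - I_h^\Gamma(\eta\psi_h)}_{L^2(K)} \lesssim h\norm{\nabla\eta}_{L^\infty(K)}\norm{\psi_h}_{L^2(K)}$. For the test-function factor, the element-wise approximation property of the $L^2$-projection onto constants gives $\norm{v - I_h^\Gamma v}_{L^2(K)} \lesssim h^{1/2}\abs{v}_{H^{1/2}(K)}$ (fractional-order approximation on a shape-regular element; alternatively one first proves $\norm{v - I_h^\Gamma v}_{L^2(K)} \lesssim h \abs{v}_{H^1(K)}$ and interpolates between the orders $0$ and $1$).

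The third step assembles the pieces: by Cauchy–Schwarz on each element and then in the $\ell^2$-sum over elements,
\begin{align*}
\abs{\skp{\eta\psi_h - I_h^\Gamma(\eta\psi_h), v - I_h^\Gamma v}_{L^2(\Gamma)}}
&\le \sum_{K} \norm{\eta\psi_h - I_h^\Gamma(\eta\psi_h)}_{L^2(K)}\norm{v - I_h^\Gamma v}_{L^2(K)} \\
&\lesssim h^{3/2}\norm{\nabla\eta}_{L^\infty(\Gamma)}\Big(\sum_K \norm{\psi_h}_{L^2(K)}^2\Big)^{1/2}\Big(\sum_K \abs{v}_{H^{1/2}(K)}^2\Big)^{1/2},
\end{align*}
where in the first factor only elements meeting $\operatorname{supp}(\eta)$ contribute, giving $\norm{\psi_h}_{L^2(\Gamma\cap\operatorname{supp}(\eta))}$, and the second factor is $\lesssim \norm{v}_{H^{1/2}(\Gamma)}$. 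Dividing by $\norm{v}_{H^{1/2}(\Gamma)}$ and taking the supremum yields the claim.

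The main obstacle is the handling of the fractional Sobolev norm $H^{1/2}(\Gamma)$: unlike the $H^1$ case, the $H^{1/2}$-seminorm is not localizable as a clean sum of element-wise seminorms (the Aronszajn–Slobodeckij double integral couples different elements). The standard fixes are either (i) to use the Gagliardo seminorm and estimate the off-element contributions, which costs a careful but routine argument and still yields $\sum_K \abs{v}_{H^{1/2}(K)}^2 \lesssim \norm{v}_{H^{1/2}(\Gamma)}^2$, or (ii) to avoid localizing $v$ altogether by instead using the $H^1(\Gamma)$-approximation estimate together with an operator-interpolation argument: prove the lemma with $H^{-1}(\Gamma)$ replaced on the left by $L^2(\Gamma)$ (rate $h$) and separately with the left-hand norm $L^2(\Gamma)$ (rate $h$ is trivial from stability, but one needs the statement appropriately scaled), and interpolate. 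I would follow route (ii): establish $\norm{\eta\psi_h - I_h^\Gamma(\eta\psi_h)}_{L^2(\Gamma)} \lesssim h\norm{\nabla\eta}_{L^\infty}\norm{\psi_h}_{L^2(\operatorname{supp}\eta)}$ from the local analysis above, establish $\norm{\eta\psi_h - I_h^\Gamma(\eta\psi_h)}_{H^{-1}(\Gamma)} \lesssim h^2\norm{\nabla\eta}_{L^\infty}\norm{\psi_h}_{L^2(\operatorname{supp}\eta)}$ by the same duality argument with $v \in H^1(\Gamma)$ (now genuinely localizable), and interpolate the operator $\psi_h \mapsto \eta\psi_h - I_h^\Gamma(\eta\psi_h)$ between $L^2$ and $H^{-1}$ to obtain the $H^{-1/2}$ bound with rate $h^{3/2}$, exactly as stated.
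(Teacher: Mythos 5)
Your proposal is correct and follows essentially the same route as the paper (which itself only sketches the argument and defers to \cite{FMP16}): a local $L^2$ super-approximation estimate exploiting that $\psi_h$ is elementwise constant, combined with the $L^2(\Gamma)$-orthogonality of the residual and a duality argument to gain the extra factor $h^{1/2}$ in the $H^{-1/2}(\Gamma)$-norm. Both of your fixes for the fractional norm work; note that in route (i) the localization you need, $\sum_K \abs{v}_{H^{1/2}(K)}^2 \leq \abs{v}_{H^{1/2}(\Gamma)}^2$, is the trivial direction for the Aronszajn--Slobodeckij seminorm (one only discards off-diagonal blocks of the double integral), so that route is less delicate than you suggest.
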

\begin{proof}
For details, we refer to \cite{FMP16}.
The main observation is that, on each element $K \in \mathcal{K}_h$, we have 
$\nabla \psi_h|_K \equiv 0$. Therefore, the standard approximation result reduces to
$$
 \norm{\eta\psi_h - I_h^{\Gamma}(\eta \psi_h)}_{L^2(K)} \lesssim h \norm{\nabla(\eta\psi_h)}_{L^2(K)}
 \lesssim h \norm{\nabla(\eta)\psi_h}_{L^2(K)}.
 $$
 Since $I_h^{\Gamma}$ is the $L^2$-projection, we obtain an additional factor $h^{1/2}$ in the 
 $H^{-1/2}(\Gamma)$-norm. 
\end{proof}

A similar super-approximation result holds for the nodal interpolation operator 
$I_h^{\Omega} : C(\overline{\Omega}) \rightarrow S^{1,1}(\mathcal{T}_h)$
\begin{equation}
\label{eq:super-approximation-Omega}
\norm{\eta v_h - I  _h^{\Omega}(\eta v_h)}_{H^k(\Omega)} \lesssim h^{2-k} \left( \norm{\nabla \eta}_{L^\infty(\Omega)} \norm{\nabla v_h}_{L^2(\Omega\cap \operatorname*{supp}(\eta))} 
+ \norm{D^2 \eta}_{L^\infty(\Omega)} \norm{v_h}_{L^2(\Omega\cap \operatorname*{supp}(\eta))}  \right)
\end{equation}
for any discrete function
$v_h \in S^{1,1}(\mathcal{T}_h)$, any $\eta \in W^{2,\infty}(\Omega)$, and $k=0,1$, where $H^{0}(\Omega):= L^2(\Omega)$.

In the proof of the Caccioppoli type inequality, we need the following inverse-type inequalities from 
\cite[Lem.~{3.8}]{FMP16} and \cite[Lem.~{3.6}]{FMP17}.
\begin{lemma}[ {\cite[Lem.~{3.8}]{FMP16}, \cite[Lem.~{3.6}]{FMP17}}]
\label{lem:investV}
Let $B_R \subset B_{R'}$ be concentric boxes with $\operatorname*{dist}(B_R,\partial B_{R'}) \geq 4h$. Then, for every $\psi_h \in S^{0,0}(\mathcal{K}_h)$, we have
\begin{align*}
\norm{\psi_h}_{L^2(B_R\cap \Gamma)} \lesssim h^{-1/2}\norm{\nabla \widetilde{V}\psi_h}_{L^2(B_{R'})}.
\end{align*}
Moreover,  for every $v_h \in S^{1,1}(\mathcal{T}_h)$, we have
	\begin{align}\label{eq:potentialK}
	\left\| \gamma_1 \widetilde{K}v _h \right\|_{L^{2}(B_R\cap\Gamma)}
	\lesssim h^{-1/2}\left(\left\| \nabla \widetilde{K}v _h \right\|_{L^{2}(B_{R'})}
	+\frac{1}{\operatorname*{dist}(B_R,\partial B_{R'})}\left\| \widetilde{K}v_h \right\|_{L^{2}(B_{R'})} \right). 
	\end{align}
\end{lemma}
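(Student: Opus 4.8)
The plan is to prove the two inequalities separately, both by the same localization-and-inverse-estimate strategy. For the first estimate, the idea is to test the jump relation \eqref{eq:jump-relation-2} against $\psi_h$ in a localized fashion: since $[\gamma_1 \widetilde V \psi_h] = -\psi_h$, we can write $\norm{\psi_h}_{L^2(B_R\cap\Gamma)}^2$ as (minus) a boundary pairing $\skp{[\gamma_1\widetilde V\psi_h],\eta^2\psi_h}_{L^2(\Gamma)}$ with a smooth cutoff $\eta$ that equals $1$ on $B_R\cap\Gamma$, is supported in $B_{R'}\cap\Gamma$, and satisfies $\norm{\nabla\eta}_\infty\lesssim 1/\operatorname{dist}(B_R,\partial B_{R'})\lesssim h^{-1}$. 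Integrating by parts elementwise in the interior and exterior subdomains (using that $\widetilde V\psi_h$ is harmonic off $\Gamma$ and decays at infinity for $d=3$, with the $d=2$ case handled via $\diam\Omega<1$), this pairing turns into a volume integral $\int (\nabla\widetilde V\psi_h)\cdot\nabla(\eta^2\widetilde V\psi_h)$ over a neighborhood of $B_{R'}$, which is controlled by $\norm{\nabla\widetilde V\psi_h}_{L^2(B_{R'})}$ times $\norm{\nabla(\eta^2 \widetilde V\psi_h)}$. The latter produces a term $\norm{\nabla\widetilde V\psi_h}_{L^2(B_{R'})}$ plus $h^{-1}\norm{\widetilde V\psi_h}_{L^2(B_{R'})}$; the second term is absorbed using a Poincaré/inverse estimate relating $\norm{\widetilde V\psi_h}$ to $h\norm{\nabla\widetilde V\psi_h}$ on the discrete function, and one more discrete inverse inequality converts $h^{1/2}$-scaled $L^2(\Gamma)$ quantities appropriately, yielding the factor $h^{-1/2}$.

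For the second estimate \eqref{eq:potentialK}, the approach is essentially identical but uses the other jump relation: by \eqref{eq:conormalPotentials} we have $\gamma_1^{\rm int}\widetilde K v_h = \gamma_1^{\rm ext}\widetilde K v_h = -Wv_h$, so $\gamma_1\widetilde K v_h$ is single-valued. We again introduce a cutoff $\eta$, write $\norm{\gamma_1\widetilde K v_h}_{L^2(B_R\cap\Gamma)}^2 \lesssim \skp{\gamma_1\widetilde K v_h,\eta^2\gamma_1\widetilde K v_h}_{L^2(\Gamma)}$, and integrate by parts into the volume. Because $\gamma_1\widetilde K v_h$ is not itself a trace of $\eta^2\widetilde K v_h$ we cannot directly apply a discrete super-approximation here, so instead we use a duality/extension argument: bound the boundary pairing by $\norm{\gamma_1\widetilde K v_h}_{L^2(B_R\cap\Gamma)}$ times the $L^2(\Gamma)$-norm of a suitable localized extension, and invoke elementwise integration by parts together with the fact that $\widetilde K v_h$ is harmonic off $\Gamma$. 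This produces exactly the two right-hand side terms $\norm{\nabla\widetilde K v_h}_{L^2(B_{R'})}$ and $\operatorname{dist}(B_R,\partial B_{R'})^{-1}\norm{\widetilde K v_h}_{L^2(B_{R'})}$, with the extra $h^{-1/2}$ coming from a trace inverse estimate on the single layer of elements between $B_R$ and $B_{R'}$ (this is where $\operatorname{dist}(B_R,\partial B_{R'})\geq 4h$ is used, guaranteeing there is room for such a layer).

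I expect the main obstacle to be the careful bookkeeping of the cutoff function and the discrete inverse inequalities near $\Gamma$: one must choose $\eta$ so that its support stays inside $B_{R'}$, yet large enough that integration by parts over the mesh elements meeting $B_R$ does not leak mass across $\partial B_{R'}$, and simultaneously control the elementwise commutator between $\eta$ and the discrete space. The cleanest route is to cite the precise statements in \cite[Lem.~3.8]{FMP16} and \cite[Lem.~3.6]{FMP17} — as the lemma header already does — and simply note that the arguments there carry over verbatim, since nothing about the FEM–BEM coupling enters these purely potential-theoretic inverse estimates.
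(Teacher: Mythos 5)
You should first be aware that the paper contains no proof of this lemma at all: it is imported verbatim from \cite[Lem.~3.8]{FMP16} and \cite[Lem.~3.6]{FMP17}, which is exactly what your closing paragraph recommends. So at the level of what actually enters this paper, your bottom line agrees with the authors'. The issue is with the sketch you give of how those cited proofs work, which contains a step that would fail if one tried to execute it.

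For the first estimate you propose to pair $[\gamma_1\widetilde V\psi_h]=-\psi_h$ with $\eta^2\psi_h$ and then ``integrate by parts into the volume.'' That integration by parts requires an $H^1$ extension of the boundary test function, but $\eta^2\psi_h$ is a cut-off piecewise constant on $\mathcal{K}_h$ and therefore does not belong to $H^{1/2}(\Gamma)$; no admissible extension exists. (The volume integral $\int\nabla\widetilde V\psi_h\cdot\nabla(\eta^2\widetilde V\psi_h)$ you then write down is the integration by parts against $\eta^2 V\psi_h$, the \emph{trace of the potential}, which produces $\skp{V\psi_h,\eta^2\psi_h}_{L^2(\Gamma)}$ as in \eqref{eq:proofBMcC6} --- not $\|\eta\psi_h\|_{L^2(\Gamma)}^2$.) The argument in \cite{FMP16} runs in the opposite order: first the inverse inequality for piecewise constants, $\norm{\psi_h}_{L^2(B_R\cap\Gamma)}\lesssim h^{-1/2}\norm{\psi_h}_{H^{-1/2}}$, suitably localized (this is where $\operatorname*{dist}(B_R,\partial B_{R'})\geq 4h$ enters), and only then the duality characterization of the conormal jump of the piecewise harmonic function $\widetilde V\psi_h$, tested against smooth compactly supported extensions, which yields a bound by $\norm{\nabla\widetilde V\psi_h}_{L^2(B_{R'})}$ with \emph{no} zeroth-order term because the jump pairing is exactly a gradient--gradient integral. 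Relatedly, your plan to absorb the term $h^{-1}\norm{\widetilde V\psi_h}_{L^2(B_{R'})}$ via ``a Poincar\'e/inverse estimate relating $\norm{\widetilde V\psi_h}$ to $h\norm{\nabla\widetilde V\psi_h}$'' invokes an inequality that is false on a box of side length $R\gg h$ (consider $\widetilde V\psi_h$ close to a nonzero constant there); the absence of the lower-order term in the first estimate is structural, not the result of absorption. Your outline for \eqref{eq:potentialK} is closer to the mark --- there the one-sided trace $\gamma_1\widetilde K v_h=-Wv_h$ genuinely carries the zeroth-order term appearing on the right-hand side, and the paper's own remark confirms the proof rests on harmonicity of $\widetilde K v_h$ plus jump and mapping properties --- but the phrase ``bound the boundary pairing by a suitable localized extension'' is where all the work lies (elementwise multiplicative trace inequality combined with an interior estimate for harmonic functions on the strip between $B_R$ and $B_{R'}$), and it is left unsubstantiated.
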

Combining Lemma~\ref{lem:superapprox} with Lemma~\ref{lem:investV} 
(assuming $\operatorname*{supp}\eta \subset B_R$), we obtain estimates of the form
\begin{align}\label{eq:estErrorBEM}
 \norm{\eta\psi_h - I_h^{\Gamma}(\eta \psi_h)}_{H^{-1/2}(\Gamma)} \lesssim h 
 \norm{\nabla \eta}_{L^{\infty}(\Gamma)}\norm{\nabla \widetilde{V}\psi_h}_{L^2(B_{R'})}.
\end{align}

\begin{remark}
 An inspection of the proof of \eqref{eq:potentialK} (\cite[Lem.~{3.6}]{FMP17}) shows that the main observation is that 
 $\widetilde{K}v_h$ is harmonic. The remaining arguments therein only use mapping properties and jump conditions 
 for the potential $\widetilde K$ and can directly be modified such that the same result holds for the single-layer 
 potential as well, i.e., for every $\psi_h \in S^{0,0}(\mathcal{T}_h)$, we have
	\begin{align}\label{eq:potentialV}
	\left\| \gamma_1 \widetilde{V}\psi_h \right\|_{L^{2}(B_R\cap\Gamma)}
	\lesssim h^{-1/2}\left(\left\| \nabla \widetilde{V}\psi _h \right\|_{L^{2}(B_{R'})}
	+\frac{1}{\operatorname*{dist}(B_R,\partial B_{R'})}\left\| \widetilde{V}\psi_h \right\|_{L^{2}(B_{R'})} \right). 
	\end{align}
\end{remark}

Now, with the help of a local ellipticity result, the discrete variational formulation, 
and super-approximation, we
are able to prove Theorem~\ref{th:CaccioppoliBMcC}.

\begin{proof}[Proof of Theorem~\ref{th:CaccioppoliBMcC}]
In order to reduce unnecessary notation, we write $(u,\varphi)$ for the Galerkin solution $(u_h,\varphi_h)$.
The assumption on the support of the data implies the local orthogonality
\begin{align}\label{eq:orthoBMcC}
a_{\rm bmc}(u,\varphi;\psi_h,\zeta_h) = 0  \quad \forall (\psi_h,\zeta_h) \in S^{1,1}(\T_h) \times S^{0,0}(\mathcal{K}_h)
\quad \text{with}\quad \operatorname*{supp}\psi_h, \operatorname*{supp}\zeta_h \subset B_{(1+\varepsilon)R}.
\end{align}
Let  $\eta \in C_0^\infty(\R^d)$ be a cut-off function with 
$\text{supp}\, \eta \subseteq B_{(1+\delta/4)R}$, $\eta \equiv 1$ on $B_R$, 
$0 \le \eta \le 1$, and $ \left\| D^j \eta  \right\|_{L^\infty(B_{(1+\delta)R})} 
\lesssim \frac{1}{(\delta R)^j}$ for $j=1,2$. Here, $0<\delta \leq \varepsilon$ is such that $\frac{h}{R} \leq \frac{\delta}{8}$.
We note that this choice of $\delta$ implies that 
$\bigcup\{K \in \mathcal{K}_h \, : \, \operatorname*{supp}\eta \cap K \neq \emptyset\} \subset B_{(1+\delta/2)R}$.
In the final step of the proof, we will choose two different values for $\delta$ ($\leq \varepsilon$) depending on $\varepsilon$ - 
one of them, $\delta=\frac{\varepsilon}{2}$, explains the assumption made on $\varepsilon$ in the theorem. \\

\noindent
{\bf Step 1:} We provide a ``localized'' ellipticity estimate, i.e., we prove an inequality of the form
\begin{align*}
\left\| \nabla (\eta u ) \right\|_{L^2(\Omega)}^2 +
\left\| \nabla (\eta \widetilde{V}  \varphi) \right\|_{L^2(\mathbb{R}^d)}^2 
\lesssim a_{\rm bmc}(u,\varphi;\eta^2 u,\eta^2\varphi) \quad +
\quad \text{terms in weaker norms}.
\end{align*}
(See (\ref{eq:proofBMcC7b}) for the precise form.)
Since the ellipticity constant $C_{\rm ell}$ of $\mathbf{C}$ satisfies $C_{\rm ell}>1/4$, we may
choose a $\rho >0$ such that $1/4 < \rho/2 < C_{\rm ell}$.
This implies $C_{\rho}:=\min\{1-\frac{1}{2\rho},C_{\rm ell}-\frac{\rho}{2}\} > 0$, and 
we start with
\begin{align}\label{eq:proofBMcC1}
\left(C_{\rm ell}-\frac{\rho}{2}\right)\left\| \nabla (\eta u ) \right\|_{L^2(\Omega)}^2   +
\left(1-\frac{1}{2\rho}\right)\left\| \nabla (\eta \widetilde{V}  \varphi) \right\|_{L^2(\mathbb{R}^d)}^2 
&\le C_{\rm ell}\left\| \nabla (\eta u) \right\|_{L^2(\Omega)}^2+
\left\| \nabla (\eta  \widetilde{V}\varphi) \right\|_{L^2(\mathbb{R}^d)}^2 
 \nonumber \\ 
& \quad - \frac{1}{2\rho}\left\| \nabla (\eta \widetilde{V}\varphi) \right\|_{L^2(\Omega)}^2 -
\frac{\rho}{2}\left\| \nabla (\eta u) \right\|_{L^2(\Omega)}^2. 
\end{align}
Young's inequality implies
\begin{align}\label{eq:proofBMcC2}
-\frac{1}{2\rho}\left\| \nabla (\eta \widetilde{V} \varphi) \right\|_{L^2(\Omega)}^2 
-\frac{\rho}{2}\left\| \nabla (\eta u) \right\|_{L^2(\Omega)}^2 
&\le -\left\| \nabla (\eta \widetilde{V} \varphi) \right\|_{L^2(\Omega)}
\left\| \nabla (\eta u) \right\|_{L^2(\Omega)} \nonumber \\
& \le - \skp{\nabla (\eta \widetilde{V}\varphi), \nabla (\eta u )}_{L^2(\Omega)}.
\end{align}
Inserting \eqref{eq:proofBMcC2} into \eqref{eq:proofBMcC1} leads to 
\begin{align}\label{eq:proofBMcC3}
\nonumber
C_{\rho}\left\| \nabla (\eta u) \right\|_{L^2(\Omega)}^2 +
C_{\rho}\left\| \nabla (\eta \widetilde{V}\varphi)  \right\|_{L^2(\R^d)}^2   
&\le \left\| \nabla (\eta \widetilde{V}\varphi) \right\|_{L^2(\mathbb{R}^d)}^2
+C_{\rm ell}\left\| \nabla (\eta u)  \right\|_{L^2(\Omega)}^2 
\\
& \quad 
-  \skp{\nabla (\eta \widetilde{V}\varphi), \nabla (\eta u)}_{L^2(\Omega)}.
\end{align}
An elementary calculation shows 
\begin{align}\label{eq:proofBMcC4}
\nonumber
 \skp{\nabla (\eta \widetilde{V}\varphi), \nabla (\eta u)}_{L^2(\Omega)}
&=
 \skp{\nabla \widetilde{V}\varphi, \nabla (\eta^2 u)}_{L^2(\Omega)} \\
& \quad +\skp{(\nabla \eta) \widetilde{V}\varphi, \nabla (\eta u)}_{L^2(\Omega)}
  - \skp{ \nabla \widetilde{V}\varphi, \eta (\nabla \eta) u}_{L^2(\Omega)}.
\end{align}
Since the single-layer potential is harmonic in $\Omega$, integration by parts (in $\Omega$) and 
$\gamma_1^{\rm int}\widetilde V = 1/2+K'$ lead to
\begin{align}\label{eq:proofBMcC5}
\skp{\nabla \widetilde{V}\varphi, \nabla (\eta^2 u)}_{L^2(\Omega)}
= \skp{\gamma_1^{\rm int} \widetilde{V}\varphi, \eta^2 u}_{L^2(\Gamma)}
= \skp{(1/2 +{K^\prime})\varphi, \eta^2 u}_{L^2(\Gamma)}.
\end{align}
Similarly, with integration by parts (in $\Omega$ and $\Omegaext$) 
and the jump condition of the single-layer potential we obtain
\begin{align}\label{eq:proofBMcC6}
\nonumber
\left\| \nabla  (\eta \widetilde{V}\varphi) \right\|_{L^2(\mathbb{R}^d)}^2 
&= \skp{\nabla \widetilde{V}\varphi, \nabla (\eta^2 \widetilde{V}\varphi)}_{L^2(\mathbb{R}^d)} 
+ \skp{\nabla \eta \widetilde{V}\varphi, \nabla \eta\widetilde{V} \varphi}_{L^2(\mathbb{R}^d)} \\ \nonumber 
&= - \skp{\left[ \gamma_1 \widetilde{V}\varphi\right] , \eta ^2 V\varphi}_{L^2(\Gamma)}
+ \skp{\nabla \eta \widetilde{V} \varphi, \nabla   \eta\widetilde{V} \varphi}_{L^2(  \mathbb{R}^d) }\\
&= \skp{V\varphi, \eta^2 \varphi}_{L^2(\Gamma)}
+ \skp{\nabla \eta \widetilde{V}\varphi, \nabla \eta\widetilde{V}\varphi}_{L^2(\mathbb{R}^d)}.
\end{align}
Moreover, the symmetry of $\mathbf{C}$ implies
\begin{align}\label{eq:proofBMcC7}
C_{\rm ell}\left\| \nabla  ( \eta u )  \right\|_{L^2(\Omega) }^2
\leq \skp{\mathbf{C}\nabla(\eta u), \nabla( \eta u )}_{L^2(\Omega) } 
= \skp{\mathbf{C}\nabla u, \nabla( \eta^2 u )}_{L^2(\Omega) } 
+ \skp{\mathbf{C}\nabla \eta u, \nabla   \eta u}_{L^2(\Omega) }.
\end{align}
Plugging \eqref{eq:proofBMcC4}--\eqref{eq:proofBMcC7} into \eqref{eq:proofBMcC3}, we infer
\begin{align}\label{eq:proofBMcC7a}
\nonumber
C_{\rho}\left\| \nabla (\eta u) \right\|_{L^2(\Omega)}^2+
C_{\rho}\left\| \nabla (\eta \widetilde{V}\varphi) \right\|_{L^2(\mathbb{R}^d)}^2 
&\le   \skp{\mathbf{C}\nabla u, \nabla (\eta^2 u)}_{L^2(\Omega)} + \skp{\mathbf{C}\nabla \eta u, \nabla \eta u}_{L^2(\Omega)}
 + \skp{ V\varphi, \eta^2 \varphi}_{L^2(\Gamma)} 
\\ \nonumber
&
\quad + \norm{\nabla\eta \widetilde{V}\varphi}_{L^2(\mathbb{R}^d)}^2
- \skp{(1/2+K')\varphi, \eta^2 u}_{L^2(\Gamma)} 
 \\
& \quad + \skp{\nabla \widetilde{V}\varphi, (\nabla\eta) \eta u}_{L^2(\Omega)}
- \skp{\nabla \eta \widetilde{V}\varphi, \nabla (\eta u)}_{L^2(\Omega)} \nonumber\\
& = a_{\rm bmc}(u,\varphi;\eta^2u,\eta^2\varphi) + \skp{\mathbf{C}\nabla \eta u, \nabla   \eta u}_{L^2(\Omega) }
+  \norm{\nabla\eta \widetilde{V}\varphi}_{L^2(\mathbb{R}^d)}^2 
 \nonumber \\
&\quad + \skp{\nabla \widetilde{V}\varphi, (\nabla\eta) \eta u}_{L^2(\Omega)}- \skp{\nabla \eta \widetilde{V}\varphi, \nabla (\eta u)}_{L^2(\Omega)}.
\end{align}
 Young’s inequality and $\norm{\nabla \eta}_{L^{\infty}(\R^d)}\lesssim \frac{1}{\delta R}$ imply
\begin{align}\label{eq:proofBMcCYoung1}
\nonumber 
\abs{\skp{\nabla \widetilde{V}\varphi, (\nabla\eta) \eta u}_{L^2(\Omega)}} &\leq 
\left| \skp{\nabla(\eta\widetilde{V} \varphi), \nabla  \eta u}_{L^2(\Omega)}\right| 
+ \abs{\skp{\nabla\eta \widetilde{V}\varphi, \nabla\eta u}_{L^2(\Omega)}} \\ \nonumber
&\le 
\left\|\nabla(\eta\widetilde{V} \varphi)\right\|_{L^2(\Omega)} \left\|\nabla \eta u \right\|_{L^2(\Omega)}
+\frac{C}{(\delta R)^2}\norm{u}_{L^2(B_{(1+\delta)R}\cap\Omega)}\norm{\widetilde{V}\varphi}_{L^2(B_{(1+\delta)R})}
\\
& \le \frac{C}{(\delta R)^2}\left(\left\| u \right\|^2_{L^2(B_{(1+\delta)R}\cap \Omega)} +
\norm{\widetilde{V}\varphi}_{L^2(B_{(1+\delta)R})}^2\right)
 +\frac{C_{\rho}}{4} \left\|  \nabla(\eta \widetilde{V}\varphi)\right\|_{L^2(\R^d)},
\end{align}
as well as
\begin{align}\label{eq:proofBMcCYoung2}
\nonumber \left| \skp{\nabla \eta \widetilde{V} \varphi, \nabla (\eta u)}_{L^2(\Omega) }\right| 
& \le \left\| \nabla\eta \widetilde{V}\varphi \right\|_{L^2(\Omega)}\left\| \nabla (\eta u) \right\|_{L^2(\Omega)} \\
& \le
\frac{2C}{(\delta R)^2}\left\| \widetilde{V}\varphi \right\|^2_{L^2(B_{(1+\delta)R})}
+ \frac{C_{\rho}}{4}\left\| \nabla (\eta u) \right\|^2_{L^2(\Omega)}. 
\end{align}
Absorbing the gradient terms in \eqref{eq:proofBMcCYoung1}-\eqref{eq:proofBMcCYoung2} 
in the left-hand side of \eqref{eq:proofBMcC7a}, we arrive at 
\begin{align}\label{eq:proofBMcC7b}
\left\| \nabla (\eta u) \right\|_{L^2(\Omega)}^2+
\left\| \nabla (\eta \widetilde{V}\varphi) \right\|_{L^2(\mathbb{R}^d)}^2 
  & \lesssim 
 a_{\rm bmc}(u,\varphi;\eta^2u,\eta^2\varphi) \nonumber \\
 &\quad + \frac{1}{(\delta R)^2} \norm{\widetilde{V}\varphi}_{L^2(B_{(1+\delta)R})}^2 
+  \frac{1}{(\delta R)^2} \left\| u\right\|_{L^2(B_{(1+\delta)R}\cap \Omega)}^2.
\end{align}

\noindent
{\bf Step 2:} We apply the local orthogonality of $(u,\varphi)$ 
to piecewise polynomials and use approximation properties.

Let $I_h^{\Omega}: C(\overline{\Omega})  \rightarrow S^{1,1}(\T_h)$ 
be the nodal interpolation operator and $I_h^{\Gamma}$ the $L^2(\Gamma)$-orthogonal projection mapping onto
$S^{0,0}(\mathcal{K}_h)$. Then, the orthogonality \eqref{eq:orthoBMcC} leads to
\begin{align}\label{eq:proofBMcC8}
\nonumber a_{\rm bmc}(u,\varphi;\eta^2u,\eta^2\varphi) &= 
  a_{\rm bmc}(u,\varphi;\eta^2u - I_h^{\Omega}(\eta^2 u),\eta^2\varphi-I_h^{\Gamma}(\eta^2 \varphi)) \\ \nonumber
  &= \skp{\mathbf{C}\nabla u, \nabla( \eta^2 u - I_h^{\Omega}(\eta^2 u)) }_{L^2(\Omega)}
+ \skp{(1/2-K')\varphi, I_h^{\Omega}(\eta^2 u) - \eta^2 u}_{L^2(\Gamma)} \\ 
&\quad+
  \skp{V\varphi, \eta^2 \varphi - I_h^{\Gamma}(\eta^2 \varphi)}_{L^2(\Gamma)} 
 - \skp{u, I_h^{\Gamma}(\eta^2 \varphi) -\eta^2 \varphi}_{L^2(\Gamma)} \nonumber \\
 &=: T_1+T_2+T_3+T_4.
\end{align}
The term $T_3$ can be estimated in exactly the same way as in \cite{FMP16}.
More explicitly, we need a second cut-off function $\widetilde{\eta}$ 
with $0 \le \widetilde{\eta} \le 1$, $\widetilde{\eta} \equiv 1$ on 
$B_{(1+\delta/2)R}\supseteq \operatorname*{supp} (\eta^2 \varphi - I_h^{\Gamma}(\eta^2 \varphi))$,  
$\operatorname*{supp} \widetilde{\eta} \subseteq \overline{B_{(1+\delta)R}}$ 
and $ \left\| \nabla \widetilde{\eta} \right\|_{L^\infty(B_{(1+\delta)R})} \lesssim \frac{1}{\delta R}$. 
Here, the support property of $I_h^{\Gamma}(\eta^2\varphi)$ follows from the assumption on $\delta$.
The trace inequality together with
the super-approximation properties of $I_h^{\Gamma}$, expressed in \eqref{eq:estErrorBEM}, lead to
\begin{align}\label{eq:proofBMcC9}
\left|\skp{V\varphi, \eta^2 \varphi - I_h^{\Gamma}(\eta^2 \varphi)}_{L^2(\Gamma)}\right| & =
\left|\skp{\widetilde{\eta}V\varphi, \eta^2 \varphi - I_h^{\Gamma}(\eta^2 \varphi)}_{L^2(\Gamma)}\right|
\le \left\|\widetilde{\eta} {V}\varphi \right\|_{H^{1/2}(\Gamma)} 
\left\|\eta^2 \varphi -I_h^{\Gamma}( \eta^2\varphi ) \right\|_{H^{-1/2}(\Gamma)} \nonumber \\
& 
\lesssim 
\left\|\widetilde{\eta} \widetilde{V}\varphi \right\|_{H^{1}(\Omega)} \frac{h}{\delta R}
\left\| \nabla \widetilde{V}\varphi \right\|_{L^2(B_{(1+\delta)R})} \nonumber \\
&\lesssim 
\frac{h}{\delta R} \left\| \nabla \widetilde{V}\varphi \right\|^2_{L^2(B_{(1+\delta)R})} +
\frac{1}{(\delta R)^2} \left\| \widetilde{V}\varphi \right\|^2_{L^2(B_{(1+\delta)R})} .
\end{align}
With the same arguments, we obtain an estimate for $T_4$
\begin{align}\label{eq:proofBMcC10}
\left|\skp{u, I_h^{\Gamma}(\eta^2 \varphi)\! - \!\eta^2 \varphi}_{L^2(\Gamma)} \right| 
& \lesssim  
\frac{h}{\delta R} \left\| \nabla u \right\|^2_{L^2(B_{(1+\delta)R}\cap\Omega)} 
+\frac{h}{\delta R} \left\|\nabla \widetilde{V}\varphi \right\|^2_{L^2(B_{(1+\delta)R})} + 
\frac{1}{(\delta R)^2} \left\| u \right\|^2_{L^2(B_{(1+\delta)R}\cap\Omega)} .
\end{align}
The volume term $T_1$ in \eqref{eq:proofBMcC8} 
can be estimated as in \cite{FMP15}. 
Here, the super-approximation properties of $I_h^{\Omega}$ from \eqref{eq:super-approximation-Omega}, 
 Young’s inequality, and $\frac{h}{\delta R} \le 1$ lead to
\begin{align}\label{eq:proofBMcC11}
\nonumber &\left|\skp{\mathbf{C}\nabla u, \nabla (\eta^2 u -I_h^{\Omega}(\eta^2 u))}_{L^2(\Omega) } \right|  
\le
 \left\|\mathbf{C} \nabla u \right\|_{L^2(B_{(1+\delta)R}\cap\Omega)} \left\|\nabla (\eta^2 u -I_h^{\Omega}(\eta^2 u)) \right\|_{L^2(\Omega)}
 \\ \nonumber
 & \qquad \qquad
\lesssim   \left\| \nabla  u \right\|_{L^2(B_{(1+\delta)R}\cap\Omega)}\left(
\frac{h}{(\delta R)^2}\left\| u \right\|_{L^2(B_{(1+\delta)R}\cap\Omega)} \right. 
\left. +\frac{h}{ \delta R} \left\| \eta \nabla u\right\|_ {L^2(B_{(1+\delta)R}\cap\Omega)}\right )   \\
&
\qquad \qquad\le C\frac{h^2}{(\delta R)^2}\left\| \nabla u \right\|^2_{L^2(B_{(1+\delta)R}\cap\Omega)}
+\frac{1}{4}\left\| \eta \nabla u\right\|^2_{L^2(B_{(1+\delta)R}\cap\Omega)}.
\end{align}
It remains to treat 
the coupling term $T_2$ involving the adjoint double-layer operator in \eqref{eq:proofBMcC8}. 
With the support property $\operatorname*{supp}(I_h^{\Omega}(\eta^2 u) - \eta^2 u) \subset B_{(1+\delta/2)R}$,
which follows from $8h \leq \delta R$, and 
$(1/2-K')\varphi = -\gamma_1^{\rm ext} \widetilde{V}\varphi$, we obtain
\begin{align}\label{eq:proofBMcC12}
\abs{\skp{ (1/2-K')\varphi, I_h^{\Omega}(\eta^2 u) - \eta^2 u}_{L^2(\Gamma)}}
&\le 
\left\|\gamma_1^{\rm ext} \widetilde{V}\varphi \right\|_{L^{2}(B_{(1+\delta/2)R}\cap\Gamma)} 
\left\| I_h^{\Omega}(\eta^2 u) - \eta^2 u \right\|_{L^{2}(\Gamma)}.
\end{align}
The multiplicative trace inequality for
$\Omega$, see, e.g., \cite{BrennerScott}, the super-approximation property 
of $I_h^{\Omega}$ from \eqref{eq:super-approximation-Omega}, and $ \frac{h}{R} \le \frac{\delta}{8}$ lead to
(see also \ \cite[Eq.~(25), (26)]{FMP15} for more details) 
 \begin{align}\label{eq:proofBMcC13}
 \nonumber \left\| I_h^{\Omega}(\eta^2 u) - \eta^2 u \right\|_{L^{2}(\Gamma)} & \le 
  \left\| I_h^{\Omega}(\eta^2 u) - \eta^2 u \right\|_{L^2(\Omega)} 
  + \left\| I_h^{\Omega}(\eta^2 u) - \eta^2 u \right\|^{1/2}_{L^{2}(\Omega)}
   \left\| \nabla ( I_h^{\Omega}(\eta^2 u) - \eta^2 u) \right\|^{1/2}_{L^{2}(\Omega)}  \\
   & \lesssim
 \frac{h^{3/2}}{\delta R}  \left\| \nabla u \right\|_{L^2(B_{(1+ \delta)R}\cap\Omega)}
  + \frac{h^{3/2}}{(\delta R)^2} \left\| u \right\|_{L^2(B_{(1+\delta)R}\cap\Omega)}.
  \end{align}
We use estimate
	\eqref{eq:potentialV} and \eqref{eq:proofBMcC13} in \eqref{eq:proofBMcC12}, which implies
 \begin{align}\label{eq:proofBMcC15}
\nonumber \left| \skp{(1/2-K')\varphi, I_h^{\Omega}(\eta^2 u) - \eta^2 u}_{L^2(\Gamma)}\right| 
& \lesssim h ^{{-1/2}} \left(\left\| \nabla \widetilde{V}\varphi \right\|_{L^{2}(B_{(1+\delta)R})} 
+\frac{1}{\delta R} \left\| \widetilde{V}\varphi \right\|_{L^{2}(B_{(1+\delta)R})} \right) \cdot
\\ \nonumber 
 & \quad \left( \frac{h^{3/2}}{\delta R} \left\| \nabla u \right\|_{L^2(B_{(1+\delta)R}\cap\Omega)}
 + \frac{h^{3/2}}{(\delta R)^2} \left\| u \right\|_{L^2(B_{(1+\delta)R}\cap\Omega)} \right) 
\\ \nonumber
  & \lesssim \frac{h}{\delta R} \Bigl\{ \Big( \left\| \nabla \widetilde{V}\varphi \right\|^2_{L^{2}(B_{(1+\delta)R})}     
+ \left\| \nabla u \right\|^2_{L^2(B_{(1+\delta)R}\cap\Omega)} \Big)\\
& \quad+
 \frac{1}{(\delta R)^2} \Big( \left\| u \right\|^2_{L^2(B_{(1+\delta)R}\cap\Omega)}
 +\left\| \widetilde{V}\varphi \right\|^2_{L^2(B_{(1+\delta)R})} \Big) \Bigr\}. 
 \end{align}
 Finally, inserting \eqref{eq:proofBMcC9}, \eqref{eq:proofBMcC10}, \eqref{eq:proofBMcC11}, and  
 \eqref{eq:proofBMcC15} into \eqref{eq:proofBMcC8} and further into \eqref{eq:proofBMcC7b},
 and absorbing the term $\frac{1}{4}\left\| \eta \nabla u\right\|^2_{L^2(B_{(1+\delta)R})}$ on the 
 left-hand side implies
\begin{align}\label{eq:proofBMcC16}
\nonumber
\left\| \nabla u \right\|_{L^2(B_R\cap\Omega)}^2 +
\left\| \nabla \widetilde{V}\varphi \right\|_{L^2(B_R)}^2  
&\leq
\left\| \nabla (\eta  u) \right\|_{L^2(\Omega)}^2
+\left\| \nabla (\eta \widetilde{V}\varphi) \right\|_{L^2( \mathbb{R}^d)}^2  
  \\ \nonumber
&\lesssim \frac{h}{\delta R} \left( 
\left\| \nabla u \right\|^2_{L^2(B_{(1+\delta)R}\cap\Omega)}+
\left\| \nabla \widetilde{V}\varphi \right\|^2_{L^{2}(B_{(1+\delta)R})}     
 \right)\\
& \quad +
\frac{1}{(\delta R)^2} \left( \left\| u \right\|^2_{L^2(B_{(1+\delta)R}\cap\Omega)}
+\left\| \widetilde{V}\varphi \right\|^2_ {L^2(B_{(1+\delta)R})} \right). 
\end{align}

\noindent
{\bf Step 3:} We iterate \eqref{eq:proofBMcC16} to obtain the claimed powers of $h$ for the gradient terms.

We set $\delta = \frac{\varepsilon}{2}$, and 
use \eqref{eq:proofBMcC16} again for the gradient terms on the right-hand side with the boxes 
$B_{\widetilde{R}}$ and $B_{(1+\widetilde{\delta})\widetilde{R}}$, where
$\widetilde{\delta} = \frac{\varepsilon}{\varepsilon +2}$ and $\widetilde{R} = (1+\varepsilon/2)R$. 
We note that $16h\leq \varepsilon R$ implies $8h\leq \widetilde{\delta} \widetilde{R}$, so we may apply \eqref{eq:proofBMcC16}.
Considering $(1+\widetilde{\delta})(1+\frac{\varepsilon}{2}) = 1+\varepsilon$, we get
\begin{align}\label{eq:proofBMcC17}
\left\| \nabla u \right\|_{L^2(B_R\cap\Omega)}^2+
\left\| \nabla \widetilde{V}\varphi \right\|_{L^2(B_R)}^2  
 &\lesssim 
 \frac{h^2}{ (\varepsilon R)^2 }\left( 
 \left\| \nabla u \right\|^2_{L^{2}(B_{(1+\varepsilon)R}\cap \Omega)}+
 \left\| \nabla \widetilde{V}\varphi \right\|^2_{L^{2}(B_{(1+\varepsilon)R})} 
  \right)
\nonumber \\
&\quad  + \left(\frac{h}{(\varepsilon R )^3 }+ \frac{1}{(\varepsilon R )^2 }\right)
  \left(\left\| u \right\|^2_ {L^2(B_{(1+\varepsilon)R}\cap\Omega)}
 +\left\| \widetilde{V}\varphi \right\|^2_ {L^2(B_{(1+\varepsilon)R})}\right),
\end{align}
and with
$\frac{h}{\varepsilon R} <1$,
we conclude the proof.
\end{proof}

\subsection{The symmetric coupling}
In this section, we provide the proof of Theorem~\ref{th:CaccioppoliSymm}. While some 
parts of the proof are similar to the proof of Theorem~\ref{th:CaccioppoliBMcC} and 
are therefore shortened, there are some differences as well, mainly that it does not suffices to 
study the single-layer potential. Indeed, one has to add a term containing the double-layer potential 
to the Caccioppoli inequality in order to get a localized ellipticity estimate.  

\begin{proof}[Proof of Theorem~\ref{th:CaccioppoliSymm}]
Again, we write $(u,\varphi)$ for the Galerkin solution $(u_h,\varphi_h)$. The assumption on the support of the data implies the local orthogonality
\begin{align}\label{eq:orthosym}
a_{\rm sym}(u,\varphi;\psi_h,\zeta_h) = 0  \quad 
\forall (\psi_h,\zeta_h) \in S^{1,1}(\T_h) \times S^{0,0}(\mathcal{K}_h)
\quad \text{with}\quad \operatorname*{supp}\psi_h, \operatorname*{supp}\zeta_h \subset B_{(1+\varepsilon)R}.
\end{align}

\noindent

As in the proof of Theorem~\ref{th:CaccioppoliBMcC}
let $\eta \in C_0^\infty(\R^d)$ be a cut-off function with 
$\text{supp}\, \eta \subseteq B_{( 1+\delta/4) R}$, 
$\eta \equiv 1$ on $B_R $, $0 \le \eta \le 1$, 
and $ \left\| D^j \eta \right\|_{L^\infty(B_{(1+\delta)R})} \lesssim \frac{1}{\delta R}$ for $j=1,2$. 
Here, $0<\delta\leq \varepsilon$ is given such that $\frac{h}{R}\leq \frac{\delta}{16}$ 
and will be chosen in the last step of the proof.


\noindent
{\bf Step 1:} We start with a local ellipticity estimate. More precisely, we show 
\begin{align*}
 \left\|\nabla (\eta u) \right\|^2_{L^2(\Omega)} 
 &+\left\|\nabla (\eta \widetilde{V} \varphi) \right\|^2_{L^2(\R^d) }
 +\left\|\nabla (\eta \widetilde{K} u) \right\|^2_{L^2(\R^d\setminus\Gamma)} 
 \le a_{\rm sym}(u, \varphi; \eta ^2 u , \eta ^2 \varphi)+ {\rm  terms \, in \, weaker \, norms}.
\end{align*}
(See (\ref{eq:proofCSC3}) for the precise statement.)
{}From \eqref{eq:proofBMcC7} and the Cauchy-Schwarz inequality we get 
\begin{align}\label{eq:proofCSC1}
\nonumber C_{\rm ell} \left\|\nabla (\eta u) \right\|^2_{L^2(\Omega)} 
&+\frac{1}{2}\left\|\nabla (\eta \widetilde{V} \varphi) \right\|^2_{L^2(\R^d) }
+\frac{1}{2}\left\|\nabla (\eta \widetilde{K} u) \right\|^2_{L^2(\R^d\setminus\Gamma)} \\
&\qquad \leq \skp{\mathbf{C}\nabla u,\nabla(\eta^2 u)}_{L^2(\Omega)}+ 
\skp{\mathbf{C}\nabla\eta u,\nabla\eta u}_{L^2(\Omega)}  + \left\|\nabla (\eta \widetilde{V} \varphi) \right\|^2_{L^2(\R^d) }
\nonumber\\ 
& \qquad\quad
 +\left\|\nabla (\eta \widetilde{K} u) \right\|^2_{L^2(\R^d\setminus\Gamma)} 
-\skp{\nabla (\eta \widetilde{V} \varphi),\nabla (\eta \widetilde{K} u)}_{L^2(\mathbb{R}^d\setminus\Gamma)}. 
\end{align}
A direct calculation reveals that 
$\|\nabla (\eta\widetilde{K} u)\|^2_{L^2(\R^d\setminus\Gamma)} = \|(\nabla \eta) \widetilde{K} u\|^2_{L^2(\R^d\setminus\Gamma)} + \skp{\nabla \widetilde{K} u, \nabla (\eta ^2 \widetilde{K}u)}_{L^2(\R^d\setminus\Gamma)}$. 
Inserting this and \eqref{eq:proofBMcC6} in 
(\ref{eq:proofCSC1}) yields
\begin{align}
\nonumber C_{\rm ell}\left\|\nabla (\eta u) \right\|^2_{L^2(\Omega)} 
&+\frac{1}{2}\left\|\nabla (\eta \widetilde{V} \varphi) \right\|^2_{L^2(\R^d) }
+\frac{1}{2}\left\|\nabla (\eta \widetilde{K} u) \right\|^2_{L^2(\R^d\setminus\Gamma)}
\\
&\qquad \le \skp{\mathbf{C}\nabla u, \nabla(\eta^2 u)}_{L^2(\Omega)} +\skp{\mathbf{C}\nabla\eta u,\nabla\eta u}_{L^2(\Omega)} 
 +  \skp{V\varphi, \eta^2 \varphi}_{L^2(\Gamma)} \nonumber \\ 
&\qquad\quad + \norm{(\nabla \eta) \widetilde{V}\varphi}_{L^2(\R^d)}^2
+ \skp{\nabla \widetilde{K} u, \nabla (\eta ^2 \widetilde{K}u)}_{L^2(\R^d\setminus\Gamma)} 
+ \norm{(\nabla \eta) \widetilde{K}u}_{L^2(\R^d\setminus\Gamma)}^2 \nonumber \\  
&\qquad\quad-\skp{\nabla (\eta \widetilde{V} \varphi),\nabla (\eta \widetilde{K} u)}_{L^2(\mathbb{R}^d\setminus\Gamma)}.
\end{align}
Integration by parts together with the jump conditions (\ref{eq:jump-relation-1}), \eqref{eq:jump-relation-2} for the double-layer potential gives
\begin{align}
  \skp{\nabla \widetilde{K} u, \nabla (\eta ^2 \widetilde{K}u)}_{L^2(\R^d\setminus\Gamma)} = \skp{Wu,\eta^2 u}_{L^2(\Gamma)}.
\end{align}
With a calculation analogous to \eqref{eq:proofBMcC4} (in fact, replace $u$ there with $\widetilde{K} u$), we get  
\begin{align*}
\skp{\nabla (\eta \widetilde{V} \varphi),\nabla (\eta \widetilde{K} u)}_{L^2(\mathbb{R}^d\setminus\Gamma)}  =
\skp{\nabla (\widetilde{V} \varphi),\nabla (\eta^2 \widetilde{K} u)}_{L^2(\mathbb{R}^d\setminus\Gamma)} + \text{l.o.t.},
\end{align*}
where the omitted terms (cf.~(\ref{eq:proofBMcC4})) 
\begin{equation*}
\text{l.o.t.} = 
\langle (\nabla \eta) \widetilde{V} \varphi, \nabla (\eta\widetilde K u)\rangle_{L^2(\mathbb{R}^d\setminus\Gamma)} - \langle \nabla \widetilde{V} \varphi,\eta (\nabla \eta) \widetilde K u\rangle_{L^2(\mathbb{R}^d\setminus\Gamma)}
\end{equation*}
can be estimated in weaker norms (i.e., $\|\widetilde{V}\varphi\|_{L^2(B_{(1+\delta/2)R})}$, $\|\widetilde K u\|_{L^2(B_{(1+\delta/2)R}\setminus \Gamma)}$)
or lead to terms that are absorbed in the left-hand side
as in the proof of Theorem~\ref{th:CaccioppoliBMcC} (see \eqref{eq:proofBMcCYoung1}, \eqref{eq:proofBMcCYoung2}).
With integration by parts on $\Omega$ and $\Omegaext$, we get
\begin{align}\label{eq:proofCSC3a}
\nonumber\skp{\nabla  \widetilde{V} \varphi,\nabla (\eta ^2\widetilde{K} u)}_{L^2(\mathbb{R}^d\setminus\Gamma)}
&=\skp{\gamma_1^{\text{int}}   \widetilde{V} \varphi,\gamma _0 ^ {\text{int}}  (\eta ^2\widetilde{K} u)}_{L^2(\Gamma)}
-\skp{\gamma_1 ^ {\ext}  \widetilde{V} \varphi,\gamma _0 ^ {\ext}  (\eta ^2\widetilde{K} u)}_{L^2(\Gamma)}\\
\nonumber
&=\skp{(K^\prime + 1/2) \varphi, \eta ^2(K- 1/2)u}_{L^2(\Gamma)}-\skp{(K^\prime - 1/2) \varphi, \eta ^2(K+ 1/2)u}_{L^2(\Gamma)}  \\
 &=\skp{\eta ^2\varphi,(K-1/2)u}_{L^2(\Gamma)}-\skp{(K^\prime- 1/2)\varphi, \eta ^2u}_{L^2(\Gamma)}.
\end{align}
Putting everything together and using $\norm{\nabla \eta}_{L^{\infty}(B_{(1+\delta)R})} 
\lesssim \frac{1}{\delta R}$, we obtain
\begin{align}\label{eq:proofCSC3}
\nonumber \left\|\nabla (\eta u) \right\|^2_{L^2(\Omega)} 
+\left\|\nabla (\eta \widetilde{V} \varphi) \right\|^2_{L^2(\R^d) }
&+\left\|\nabla (\eta \widetilde{K} u) \right\|^2_{L^2(\R^d\setminus\Gamma)} \lesssim 
a_{\rm sym}(u, \varphi, \eta ^2 u , \eta ^2 \varphi) + \frac{1}{(\delta R)^2} \norm{u}_{L^2(B_{(1+\delta)R}\cap\Omega)}^2
\\&\qquad\quad+ \frac{1}{(\delta R)^2} \norm{\widetilde{V}\varphi}_{L^2(B_{(1+\delta)R})}^2   
+ \frac{1}{(\delta R)^2} \norm{\widetilde{K}u}_{L^2(B_{(1+\delta)R}\setminus\Gamma)}^2. 
\end{align}
 
 \noindent {\bf Step 2:} Applying the local orthogonality as well as approximation results.
 
 With the $L^2(\Gamma)$-orthogonal projection $I_h^{\Gamma}$ and the 
 nodal interpolation operator $I_h^{\Omega}$, the orthogonality \eqref{eq:orthosym} implies
\begin{align}\label{eq:proofCSC4}
\nonumber a_{\rm sym}(u, \varphi; \eta ^2 u , \eta ^2 \varphi)&= 
a_{\rm sym}(u, \varphi; \eta ^2 u - I_h^{\Omega}(\eta^2 u), \eta ^2 \varphi - I_h^{\Gamma}(\eta^2 \varphi))\\
&=\skp{\mathbf{C} \nabla u, \nabla (\eta^2 u - I_h ^ \Omega(\eta^2 u))}_{L^2(\Omega)} 
+\skp{Wu, \eta^2 u - I_h ^ \Omega(\eta^2 u)}_{L^2(\Gamma)} \nonumber \\
\nonumber
&\quad +\skp{(K^\prime - 1/2)\varphi, \eta^2 u- I_h ^ \Omega(\eta^2 u)}_{L^2(\Gamma)}
+\skp{V\varphi, \eta^2\varphi - I_h ^ \Gamma(\eta ^2 \varphi)}_{L^2(\Gamma)}\\ 
& \quad
+\skp{(1/2-K)u, \eta^2 \varphi - I_h ^ \Gamma(\eta^2 \varphi)}_{L^2(\Gamma)}\nonumber 
\\ &=: T_1 + T_2 + T_3 + T_4 + T_5.
\end{align}
The terms $T_1$, $T_3$, $T_4$ can be estimated with \eqref{eq:proofBMcC11}, \eqref{eq:proofBMcC15} and 
\eqref{eq:proofBMcC9} respectively as in the case for the Bielak-MacCamy coupling. Therefore, it remains 
to estimate $T_2$ and $T_5$.

We start with $T_2$, which can be treated in the same way as in \cite{FMP17}. 
In fact, with techniques similar to \eqref{eq:proofBMcC9}, 
the proof of \cite[Lem.~3.8]{FMP17} (taking $v = \widetilde{K} u$ there and noting that \cite[Lemma~{3.6}]{FMP17} is employed,
which does not impose orthogonality conditions on $v$) provides the estimate
\begin{align*}
\left| \skp{Wu, \eta^2 u - I_h ^ \Gamma (\eta^2 u)}_{L^2(\Gamma)}\right| &\le  C
\left( \frac{ h^2}{(\delta R)^2} \left\| \nabla \widetilde{K}u \right\| ^2_ {L^2(B_{(1+\delta)R}\setminus\Gamma)}+
\frac{1}{(\delta R)^2} \left\| \widetilde{K}u \right\|^2_{L^2(B_{(1+\delta)R}\setminus\Gamma)} \right) \\
&\quad+\frac{1}{4}\left\| \nabla(\eta\widetilde{K}u)\right\| ^2_ {L^2(B_{(1+\delta)R}\setminus\Gamma)}.
\end{align*}
We note that \cite[Lemma~3.8]{FMP17} imposes the condition $16h\leq \delta R$.

We finish the proof by estimating $T_5$.
To that end, we need another cut-off function $\widetilde{\eta} \in S^{1,1}(\mathcal{T}_h )$  
with $0 \le \widetilde{\eta} \le 1$, $\widetilde{\eta} \equiv 1$ 
on $B_{(1+\delta/2)R}\supseteq\text{supp} \left( I_h ^ \Gamma( \eta ^2 \varphi  ) -  \eta ^2 \varphi \right)  $,  
$\text{supp}\,\widetilde{\eta} \subseteq  B_{(1+ \delta)R}  $ and 
$ \left\| \nabla \widetilde{\eta} \right\|_{L^\infty (B_{( 1+\delta) R} ) } \lesssim \frac{1}{\delta R}$.  
Since $( 1/2-K)u= - \gamma _0^{\rm int} \widetilde{ K} u $, 
we get with a trace inequality and the approximation properties expressed in \eqref{eq:estErrorBEM} that
\begin{align}\label{eq:proofCSC11}
\nonumber
\abs{T_5}&=\left| \skp{\widetilde{\eta}\gamma _0^{\rm int} \widetilde{ K} u, \eta^2 \varphi - I_h ^ \Gamma (\eta ^2 \varphi ) }_{L^2(\Gamma)}\right| 
\lesssim \left\| \gamma_0^{\rm int}(\widetilde{\eta} \widetilde{K} u)\right\|_{H^{1/2}(\Gamma)}
\left\|\eta^2 \varphi - I_h^\Gamma (\eta^2 \varphi ) \right\|_{H^{-1/2}(\Gamma)}  
\\ & \nonumber \lesssim 
\frac{h}{\delta R} \left\| \widetilde{\eta} \widetilde{K} u\right\|_{H^{1}(\Omega\setminus\Gamma)}
\left\| \nabla \widetilde{V} \varphi \right\| _{L^2(B_{(1+\delta) R})}
\\ &  \lesssim 
\frac{h}{\delta R}\left(\left\|\nabla  \widetilde{K} u  \right\|^2_{L^2( B_{\left(1+ \delta \right)R} \setminus\Gamma   ) }  
+\left\|  \nabla \widetilde{ V} \varphi \right\| ^2_{L^2(B_{(1+{\delta})R})}\right)   
+ \frac{1}{( \delta R) ^2}  \left\|  \widetilde{K}u \right\|^2_ {L^2( B_{(1+ \delta)R }\setminus\Gamma) }.
\end{align}
Putting everything together in \eqref{eq:proofCSC4} and further in \eqref{eq:proofCSC3}, and
absorbing the terms $\frac{1}{4}\norm{\eta \nabla u}_{L^2(\Omega)}$, 
$\frac{1}{4}\norm{\nabla(\eta \widetilde{K}u)}_{L^2(\R^d)}$ in the left-hand side, finally yields
\begin{align}\label{eq:proofCSC12}
\nonumber \left\|\nabla u \right\|^2_{L^2(B_R\cap \Omega)} 
&+\left\| \nabla \widetilde{V}\varphi\right\|^2_{L^2(B_R)} +
\left\|\nabla \widetilde{K}u \right\|^2_{L^2(B_R\setminus\Gamma)}  \\
&\qquad\lesssim  \frac{h}{\delta R} \left( 
\left\| \nabla u \right\|^2_{L^2(B_{(1+\delta)R}\cap\Omega)}
+\left\| \nabla \widetilde{K}u \right\|^2_{L^2(B_{(1+\delta)R}\setminus\Gamma)}
+ \left\| \nabla \widetilde{V}\varphi \right\|^2_{L^2(B_{(1+\delta)R})} \right) \nonumber
\\& \qquad\quad+ \frac{1}{(\delta R)^2} \left(\left\|  u \right\|^2_ {L^2(B_{(1+\delta)R}\cap\Omega)}
\left\| \widetilde{V}\varphi  \right\|^2_ {L^2(B_{(1+\delta)R})}
+\left\| \nabla \widetilde{K}u \right\|^2_{L^2(B_{(1+\delta)R}\setminus\Gamma)} \right).
\end{align}

 \noindent {\bf Step 3:} By reapplying \eqref{eq:proofCSC12} to the gradient terms 
 with $\delta = \frac{ \varepsilon}{2}$ and suitable boxes, we get the desired result exactly
 as in step 3 of the proof of Theorem~\ref{th:CaccioppoliBMcC}.
\end{proof}

\subsection{The Johnson-N\'ed\'elec coupling}
In this section we prove the Caccioppoli-type inequality from Theorem~\ref{th:CaccioppoliJN}
for the Johnson-N\'ed\'elec coupling.
Most of the appearing terms have already been treated in the previous sections. The main difference
is that the double-layer potential appears naturally due to the boundary coupling terms, but 
the local orthogonality is not suited to provide an approximation for it, since the hypersingular 
operator does not appear in the bilinear form. 
A remedy for this problem is to localize the double-layer potential by splitting it into 
a local near-field and a non-local, but smooth far-field. This techniques follows
\cite{FM18}, where a similar localization using commutators is employed and 
a more detailed description of the method can be found.


\begin{proof}[Proof of Theorem~\ref{th:CaccioppoliJN}] 
Once again, we write $(u,\varphi)$ for the Galerkin solution $(u_h,\varphi_h)$.
The assumption on the support of the data implies the local orthogonality
\begin{align}\label{eq:orthojnC}
a_{\rm jn}(u,\varphi;\psi_h,\zeta_h) = 0  
\quad \forall (\psi_h,\zeta_h) \in S^{1,1}(\T_h) \times S^{0,0}(\mathcal{K}_h)
\quad \text{with}\quad \operatorname*{supp}\psi_h, \operatorname*{supp}\zeta_h \subset B_{(1+\varepsilon)R}.
\end{align}
Let $\eta \in C_0^\infty(\R^d)$ be a  cut-off function with 
$\text{supp}\, \eta \subseteq B_{( 1+\delta/2) R}$, 
$\eta \equiv 1$ on $B_{( 1+\delta/4) R}$, $0 \le \eta \le 1$, 
and $ \left\| D^j\eta \right\|_{L^\infty(B_{(1+\delta)R})} \lesssim \frac{1}{(\delta R)^j}$ for $j=1,2$.
Here, $0<\delta \leq \varepsilon$ is given such that $\frac{h}{R}\leq\frac{\delta}{16}$.
We note that the condition $\eta \equiv 1$ on $B_{( 1+\delta/4) R}$ is additionally 
imposed due to following estimate \eqref{eq:estKtildeJN}, as the localization of the double-layer operator 
is additionally needed in comparison with the other couplings.


\noindent
{\bf Step 1:} We start with a localization of the double-layer potential.
More precisely, with a second cut-off function $\widehat{\eta}$ satisfying 
$\widehat{\eta} \equiv 1$ on $B_R$ and $\operatorname{supp}\, \widehat \eta \subseteq B_{( 1+\delta/4) R}$,
$ \left\| \nabla \widehat \eta \right\|_{L^\infty(B_{(1+\delta)R})} \lesssim \frac{1}{\delta R}$,
we split 
\begin{align*}
\widehat\eta \widetilde{K}u = \widehat\eta\widetilde{K}(\eta u) + 
\widehat \eta\widetilde{K}(1-\eta)u =: v_{\rm near} + v_{\rm far}. 
\end{align*}
At first, we estimate the near-field $v_{\rm near} := \widehat\eta\widetilde{K}(\eta u)$.
The mapping properties of the double-layer potential, \eqref{eq:mapping-Ktilde}, together with the fact that 
$\operatorname{supp} \nabla \widehat \eta \subset B_{(1+\delta/4)R}\setminus B_R$
and the trace inequality provide
\begin{align*}
 \norm{\nabla v_{\rm near}}_{L^2(B_R\setminus\Gamma)} \lesssim \norm{\eta u}_{H^{1/2}(\Gamma)} 
+\frac{1}{\delta R}\|\widetilde{K} (\eta u)\|_{L^2(B_{(1+\delta/4)R}\setminus B_R) }
 \lesssim \norm{\eta u}_{H^1(\Omega)} + \frac{1}{\delta R} \|\widetilde {K} (\eta u)\|_{L^2(B_{(1+\delta/4)R}\setminus B_R)}.
\end{align*}
Since $\widehat \eta (1-\eta) \equiv 0$, the far field $v_{\rm far}$ is smooth.
Integration by parts using $\Delta \widetilde{K}((1-\eta) u) = 0$, as
well as $[\gamma_1 \widetilde{K}u] = 0$ and 
$\widehat \eta (1-\eta)\equiv 0$ (therefore no boundary terms appear), leads to
\begin{align*}
  \norm{\nabla v_{\rm far}}_{L^2(B_R\setminus\Gamma)}^2 &=
  \abs{\skp{\nabla \widetilde{K}((1-\eta) u), \nabla(\widehat\eta^2\widetilde{K}((1-\eta) u))}_{L^2(\R^d\setminus\Gamma)}} 
  +\norm{(\nabla \widehat\eta)\widetilde{K}((1-\eta) u)}_{L^2(\R^d)}^2 \\
  &\lesssim \frac{1}{(\delta R)^2}\norm{\widetilde{K}((1-\eta) u)}_{L^2(B_{(1+\delta/4)R}\backslash B_{R})}^2 \\
  &\lesssim \frac{1}{(\delta R)^2}\norm{\widetilde{K}u}_{L^2(B_{(1+\delta/4)R}\backslash B_{R})}^2+
 \frac{1}{(\delta R)^2}\norm{\widetilde{K}(\eta u)}_{L^2(B_{(1+\delta/4)R}\backslash B_{R})}^2.
\end{align*}
Here, we used that $\operatorname*{supp}(\nabla \widehat \eta)\subset B_{(1+\delta/4)R}\backslash B_{R}$.
For the last term, we apply \cite[Lemma~3.7, (i) and (ii)]{FMP16} to obtain
\begin{align*}
\norm{\widetilde{K}(\eta u)}_{L^2(B_{(1+\delta/4)R}\backslash B_{R})} \lesssim 
\sqrt{\delta R}\norm{(1/2-K)(\eta u)}_{L^2(\Gamma)} + 
\sqrt{\delta R}\sqrt{(1+\delta)R}\norm{\nabla\widetilde{K}(\eta u)}_{L^2(B_{(1+\delta/4)R}\setminus\Gamma)}.
\end{align*}
With the mapping properties of $K$, $\widetilde{K}$ from \eqref{eq:mapping-Ktilde}, \eqref{eq:mapping-K} and the multiplicative 
trace inequality this implies
\begin{align*}
\frac{1}{\delta R}\norm{\widetilde{K}(\eta u)}_{L^2(B_{(1+\delta/4)R}\backslash B_{R})} &\lesssim 
\frac{1}{\sqrt{\delta R}}\norm{\eta u}_{L^2(\Gamma)} + 
\sqrt{1+1/\delta}\norm{\eta u}_{H^1(\Omega)} \\
&\lesssim \frac{1}{\sqrt{\delta R}}\norm{\eta u}_{L^2(\Omega)} + 
\frac{1}{\sqrt{\delta R}}\norm{\eta u}^{1/2}_{L^2(\Omega)}\norm{\nabla(\eta u)}^{1/2}_{L^2(\Omega)}+
\sqrt{1+1/\delta}\norm{\eta u}_{H^1(\Omega)}\\
&\lesssim \frac{1}{\delta R}\norm{\eta u}_{L^2(\Omega)} + \norm{\nabla(\eta u)}_{L^2(\Omega)}+
\sqrt{1+1/\delta}\norm{\eta u}_{H^1(\Omega)}.
\end{align*}
Putting the estimates for the near-field and the far-field together, we obtain
\begin{align}\label{eq:estKtildeJN}
 \norm{\nabla \widetilde{K}u}_{L^2(B_R\setminus\Gamma)} &\leq 
  \norm{\nabla v_{\rm near}}_{L^2(B_R\setminus\Gamma)} +   \norm{\nabla v_{\rm far}}_{L^2(B_R\setminus\Gamma)}\nonumber \\
  &\lesssim \sqrt{1+1/\delta}\norm{\eta u}_{H^1(\Omega)} + 
  \frac{1}{\delta R}\norm{u}_{L^2(B_{(1+\delta/4)R}\cap\Omega)} + 
  \frac{1}{\delta R}\norm{\widetilde{K}u}_{L^2(B_{(1+\delta/4)R}\setminus\Gamma)}.
\end{align}

\noindent
{\bf Step 2:} We provide a local ellipticity estimate, i.e., we prove
\begin{align*}
\left\|\nabla (\eta u) \right\|^2_{L^2(\Omega)} 
&+\left\|\nabla (\eta \widetilde{V} \varphi) \right\|^2_{L^2(\R^d) }
+\left\|\nabla  \widetilde{K} u \right\|^2_{L^2(B_R\setminus\Gamma)} 
\lesssim a_{\rm jn}(u, \varphi; \eta ^2 u , \eta ^2 \varphi)+ {\text{terms  in weaker norms}}.
\end{align*}
(See (\ref{eq:ellipticity-jn}) for the precise form).
We start with \eqref{eq:estKtildeJN} to obtain
\begin{align}\label{eq:proofJNC1}
\nonumber \left\|\nabla(\eta u ) \right\|^2_{L^2(\Omega) }  
+\left\|\nabla ( \eta \widetilde{V} \varphi ) \right\|^2_{L^2(\R^d) } 
+\left\|\nabla \widetilde{K} u  \right\|_ {L^2( B_R \setminus\Gamma ) }^2 
&\lesssim 
(1+1/\delta)\left(\left\|\nabla(\eta u ) \right\|^2_{L^2(\Omega)} 
+\left\|\nabla ( \eta \widetilde{V} \varphi ) \right\|^2_{L^2(\R^d) }  \right)\\ 
& \quad+
  \frac{(1+1/\delta)}{(\delta R)^2}\norm{u}_{L^2(B_{(1+\delta)R}\cap\Omega)}^2 + 
  \frac{1}{(\delta R)^2}\norm{\widetilde{K}u}_{L^2(B_{(1+\delta)R}\setminus\Gamma)}^2.
\end{align} 
The last two terms are already in weaker norms, and for the first two terms, we apply \eqref{eq:proofBMcC3}.
Since we assumed $C_{\rm ell}>1/4$ for unique solvability, we choose a $\rho >0$ 
such that $1/4 < \rho/2 < C_{\rm ell}$ and set 
$C_{\rho}:=\min\{1-\frac{1}{2\rho},C_{\rm ell}-\frac{\rho}{2}\} > 0$. Then \eqref{eq:proofBMcC3} implies
\begin{align}\label{eq:proofJNC1b}
\nonumber C_{\rho}\left\|\nabla(\eta u ) \right\|^2_{L^2(\Omega) }  
+C_{\rho}\left\|\nabla ( \eta \widetilde{V} \varphi ) \right\|^2_{L^2(\R^d) } 
&\leq
C_{\rm ell}\left\|\nabla(\eta u ) \right\|^2_{L^2(\Omega) }  
+\left\|\nabla ( \eta \widetilde{V} \varphi) \right\|^2_{L^2(\R^d) }  \\
&\quad- \skp{\nabla ( \eta \widetilde{V} \varphi),\nabla(\eta u )}_{L^2( \Omega) } -
\skp{\nabla  \widetilde{V} \varphi,\nabla (\eta ^2\widetilde{K} u)}_{L^2(\mathbb{R}^d\setminus\Gamma)} \nonumber \\
&\quad+\skp{\nabla  \widetilde{V} \varphi,\nabla (\eta ^2\widetilde{K} u)}_{L^2(\mathbb{R}^d\setminus\Gamma)}.
\end{align} 
The first three terms can be expanded as in Theorem~\ref{th:CaccioppoliBMcC}, where \eqref{eq:proofBMcC4} leads to
\begin{align}\label{eq:proofJNC9}
 \skp{\nabla ( \eta \widetilde{V} \varphi),\nabla(\eta u )}_{L^2( \Omega) }
& =\skp{\nabla  \widetilde{V} \varphi,\nabla (\eta ^2 u)}_{L^2(\Omega)}
+ \text{l.o.t.},
\end{align}
where the omitted terms (cf.~(\ref{eq:proofBMcC4})) 
\begin{equation*}
\text{l.o.t.} = 
\langle (\nabla \eta) \widetilde{V} \varphi, \nabla (\eta u)\rangle_{L^2(\Omega)} - \langle \nabla \widetilde{V} \varphi,\eta (\nabla \eta) u\rangle_{L^2(\Omega)}
\end{equation*}
can be estimated in weaker norms (i.e., $\|\widetilde{V}\varphi\|_{L^2(B_{(1+\delta/2)R}\cap\Omega)}$, $\|u\|_{L^2(B_{(1+\delta/2)R}\cap\Omega)}$)
or lead to terms that are absorbed in the left-hand side
as in the proof of Theorem~\ref{th:CaccioppoliBMcC} (see \eqref{eq:proofBMcCYoung1}, \eqref{eq:proofBMcCYoung2}).
Equations \eqref{eq:proofCSC3a} and \eqref{eq:proofBMcC5} give
\begin{align}\label{eq:proofJNC10}
\skp{\nabla  \widetilde{V} \varphi,\nabla (\eta ^2 u)}_{L^2(\Omega)}
+\skp{\nabla  \widetilde{V} \varphi,\nabla (\eta ^2\widetilde{K} u)}_{L^2(\mathbb{R}^d\setminus\Gamma)} 
=\skp{(1/2+K)u, \eta ^2 \varphi} _{L^2( \Gamma) }.
\end{align}
Therefore, we only have to estimate the last term in \eqref{eq:proofJNC1b}.
We write in the same way as in \eqref{eq:proofJNC9}
\begin{align*}
\skp{\nabla  \widetilde{V} \varphi,\nabla (\eta ^2\widetilde{K}u)}_{L^2(\mathbb{R}^d\setminus\Gamma)} = 
\skp{\nabla (\eta^2\widetilde{V} \varphi),\nabla \widetilde{K}u}_{L^2(\mathbb{R}^d\setminus\Gamma)}+ \text{l.o.t.},
\end{align*}
where, again, the omitted terms 
\begin{equation*}
\text{l.o.t.} =
2 \langle (\nabla (\eta \widetilde{V} \varphi), (\nabla \eta) \widetilde{K} u\rangle_{L^2(\R^d\setminus\Gamma)} 
- 2 \langle (\nabla \eta) \widetilde{V} \varphi, \nabla (\eta \widetilde{K} u)\rangle_{L^2(\R^d\setminus\Gamma)} 
\end{equation*}
can be estimated in weaker norms
(i.e., by $\|\widetilde{K} u\|_{L^2(B_{(1+\delta/2)R}\setminus\Gamma)}$  and $\|\widetilde{V} \varphi\|_{L^2(B_{(1+\delta/2)R}}$) 
or absorbed in the left-hand side.
Now, integration by parts on 
$\R^d\backslash\overline{\Omega}$ and $\Omega$
together with $\Delta \widetilde{K}u = 0$ and $[\gamma_1 \widetilde{K}u] = 0 = [\eta^2\widetilde{V} \varphi]$
implies
\begin{align*}
\skp{\nabla (\eta^2\widetilde{V} \varphi),\nabla \widetilde{K}u}_{L^2(\mathbb{R}^d \setminus\Gamma)} = 
\skp{\eta^2\widetilde{V} \varphi,\Delta \widetilde{K}u}_{L^2(\mathbb{R}^d\backslash \Gamma)}=0.
\end{align*}
Putting everything together into \eqref{eq:proofJNC1b} and in turn into \eqref{eq:proofJNC1}, 
we obtain
\begin{align}
\nonumber \left\|\nabla(\eta u ) \right\|^2_{L^2(\Omega) }  
&+\left\|\nabla ( \eta \widetilde{V} \varphi ) \right\|^2_{L^2(\R^d) } 
 +\left\|\nabla (\eta \widetilde{K}  u )  \right\|_ {L^2( \R^d \setminus \Gamma) }^2 
\\ &\qquad \lesssim (1+1/\delta)\, a_{\rm jn}(u, \varphi; \eta ^2 u , \eta ^2 \varphi) 
+\frac{(1+1/\delta)}{(\delta R)^2} \left\|  \widetilde{K}u\right\| ^2_{L ^ 2( B_{ ( 1+\delta )R } \setminus\Gamma) } \nonumber
\\
\label{eq:ellipticity-jn}
& \qquad \quad+\frac{(1+1/\delta)}{(\delta R)^2}\left\| u \right\| ^2 _{L^2( B_{ ( 1+\delta )R }\cap\Omega)} 
+ \frac{(1+1/\delta)}{(\delta R)^2} \norm{\widetilde{V} \varphi}_{L^2(B_{(1+\delta)R})}^2.
\end{align}
\noindent
{\bf Step 3:} We apply the local orthogonality of $(u,\varphi)$ 
to piecewise polynomials and use approximation properties.

Let $I_h^{\Omega}: C(\overline{\Omega})  \rightarrow S^{1,1}(\T_h)$ 
be the nodal interpolation operator and $I_h^{\Gamma}$ the $L^2(\Gamma)$-orthogonal projection mapping onto
$S^{0,0}(\mathcal{K}_h)$. Then, the orthogonality \eqref{eq:orthojnC} leads to
\begin{align}\label{eq:proofJNC11}
\nonumber 
a_{\rm jn}(u, \varphi; \eta ^2 u , \eta ^2 \varphi)
&=a_{\rm jn}(u, \varphi; \eta^2 u-I_h^\Omega (\eta^2 u), \eta^2 \varphi-I_h^\Gamma (\eta^2 \varphi)) \\
& \nonumber=
\skp{\nabla u, \nabla (\eta ^2 u-I_h ^ \Omega (\eta ^2 u )  )} _{L^2(\Omega)} 
+ \skp{V\varphi , \eta ^2 \varphi-I_h ^ \Gamma (\eta ^2 \varphi )} _{L ^ 2(\Gamma) }  \\
& 
\quad -\skp{\varphi,\eta^2  u - I_h ^ \Omega (\eta ^2 u )} _{L ^ 2(\Gamma) }
+\skp{(1/2 -K)u , \eta^2 \varphi - I_h ^ \Gamma(\eta ^2 \varphi) } _{L ^ 2(\Gamma) } \nonumber \\
&=: T_1 + T_2 +T_3 +T_4.
\end{align}
The terms $T_1$, $T_2$ have already been estimated in the proof of Theorem~\ref{th:CaccioppoliBMcC}, inequalities
\eqref{eq:proofBMcC11}, \eqref{eq:proofBMcC9}, and $T_4$ was treated in \eqref{eq:proofCSC11} in the proof of 
Theorem~\ref{th:CaccioppoliSymm}.

It remains to estimate $T_3$.
With $\text{supp}\left( \eta^2 u - I_h ^ \Omega( \eta ^2 u)  \right) \subset B_{(1+\delta/2)R}  $
due to $16h\leq \delta R$, 
we get 
\begin{align*}
\abs{T_3}&=\abs{\skp{ \varphi,\eta^2 u - I_h ^ \Omega(\eta ^2 u)}_{L^2(\Gamma)}} \leq 
 \left\|\varphi\right\|_{L^2(B_{(1+\delta/2)R}\cap \Gamma)}
 \left\| \eta^2 u - I_h ^\Omega(\eta ^2 u)\right\|_{L^2(\Gamma)}. 
\end{align*}
Lemma~\ref{lem:investV} provides 
\begin{align*}
\left\|\varphi  \right\|_{{L^2}( B_{(1+\delta/2)R}) }&
\lesssim h ^{-1/2} \left\| 	 \nabla \widetilde{V}\varphi \right\|_{L^2( B_{ ( 1+\delta )R })}.
\end{align*}
Therefore, with the super-approximation properties (\ref{eq:super-approximation-Omega}) of $I_h^{\Omega}$, we obtain
\begin{align}\label{eq:proofJNC17}
\nonumber \left|\skp{ \varphi,I_h ^\Omega(\eta^2 u)-\eta ^2 u}_{L^2(\Gamma)} \right|
&\lesssim h ^{-1/2} \left\| \nabla \widetilde{V}\varphi \right\|_{L^2( B_{ ( 1+\delta )R }) }
\left(\frac{h^{3/2}}{ \delta R}  \left\|    \nabla  u \right\|  _ {L^2( B_{(1+ \delta) R}\cap\Omega) }
+ \frac{h ^{3/2}}{( \delta R) ^2}    \left\|     u \right\|  _ {L^2( B_{(1+ \delta)R}\cap\Omega) } \right)
\\
&
 \lesssim    \frac{h}{ \delta R}\left(  \left\| \nabla   \widetilde{V}\varphi  \right\|^2_{L^2 (B_{(1+\delta)R})}     
 +  \left\|  \nabla   u  \right\|^2_{L^2 ( B_{(1+ \delta)R }\cap\Omega  ) } 
 \right)+
 \frac{1}{( \delta R) ^2}  \left\|  u \right\|^2_ {L^2( B_{(1+ \delta)R } )}. 
\end{align}
Putting the estimates of $T_1$, $T_2$, $T_3$, $T_4$ together and using $\delta \lesssim 1$ leads to
\begin{align}\label{eq:proofJNC18}
& \left\| \nabla u \right\|_{L^2(B_R\cap\Omega)}^2 +
\nonumber \left\|\nabla \widetilde{V}  \varphi  \right\|_{L^2(B_R)}^2  
+\left\| \nabla \widetilde{K}  u  \right\|_ {L^2(B_R \setminus \Gamma )}^2  \\
& \qquad 
 \lesssim  
\frac{h}{ \delta^2 R}\left(  \left\| \nabla u  \right\|^2_{L^2 ( B_{(1+ \delta)R } \cap\Omega ) } 
+\left\| 	\nabla \widetilde{V} \varphi  \right\|^2_{L^2 ( B_{(1+ \delta)R }  ) }\right.
 \nonumber \left.\quad+\left\| 	\nabla \widetilde{K} u  \right\|^2_{L^2 ( B_{(1+ \delta)R } \setminus\Gamma ) }\right)  
\\& \qquad \qquad 
+
\frac{1}{\delta^3 R^2} \left(  \left\| 	 u  \right\|^2_{L^2 ( B_{(1+ \delta)R } \cap\Omega ) } \right.
 \left.\quad+\left\| 	\widetilde{V} \varphi  \right\|^2_{L^2 ( B_{(1+ \delta)R }  ) }
+\left\|  \widetilde{K} u  \right\|^2_{L^2 ( B_{(1+ \delta)R } \setminus \Gamma ) }\right).
\end{align}
\textbf{Step 4.} 
Reapplying \eqref{eq:proofJNC18} to the gradient terms 
 with $\delta = \frac{ \varepsilon}{2}$ and suitable boxes, we get the desired result exactly
 as in step 3 of the proof of Theorem~\ref{th:CaccioppoliBMcC}.
\end{proof}

\section{$\mathcal{H}$-matrix approximation to inverse matrices}\label{sec:proofHmatrix}
In this section, we prove the existence of exponentially convergent $\H$-matrix approximants to the inverses of 
the stiffness matrices of the FEM-BEM couplings, as stated in Theorem~\ref{th:H-Matrix approximation of inverses}.

Analyzing the procedure in \cite{FMP15, FMP16,AFM20} shows structural similarities in the derivation of ${\mathcal H}$-matrix 
approximations based on low-dimensional spaces of functions: A single-step approximation is obtained by using a 
Scott-Zhang operator on a coarse grid. Iterating this argument is made possible by a Caccioppoli-inequality, resulting in 
a multi-step approximation. The key ingredients of the argument are collected in properties (A1)--(A3) below. 
We mainly follow \cite{AFM20}.
\subsection{Abstract setting - from matrices to functions}
\label{sec:abstractMatrixtoFunction}
We start by reformulating the matrix approximation problem as a question 
of approximating certain functions  from low dimensional spaces.

Let $\mathbf{X}$ be a Hilbert space of functions. We consider 
variational problems of the form: find $\mathbf{u} \in \mathbf{X}$ such that 
\begin{align*}
a(\mathbf{u},\boldsymbol{\psi}) = \skp{\boldsymbol{f},\boldsymbol{\psi}} \qquad \forall \boldsymbol{\psi} \in \boldsymbol{X}
\end{align*}
for given $a(\cdot,\cdot): \mathbf{X}\times \mathbf{X} \rightarrow \R$, $\boldsymbol{f} \in \mathbf{X}'$. 
Here, the bold symbols may denote vectors, e.g., $\mathbf{u}= (u,\varphi)$  in \eqref{eq:abstractFEMBEM} 
for $\mathbf{X}= H^1(\Omega)\times H^{-1/2}(\Gamma)$,
and $\skp{\cdot,\cdot}$ denotes the appropriate duality bracket. 

For fixed $k$, $\ell \in \N$ (given by the formulation of the problem), we define $\mathbf{L}^2 := L^2(\Omega)^k\times L^2(\Gamma)^\ell$.

\begin{definition}
Let $\mathbf{X}_N \subset \mathbf{X}$ be a finite dimensional subspace of dimension $N$ that is 
also a subspace  $\mathbf{X}_N \subset \mathbf{L}^2$.
Then the linear mapping $\mathcal{S}_N : \mathbf{X}' \rightarrow \mathbf{X}_N$ 
is called the discrete solution operator
if for every $\boldsymbol{f} \in \mathbf{X}'$, there exists a unique function  
$\mathcal{S}_N \boldsymbol{f} \in \mathbf{X}_N$ satisfying
\begin{align}\label{eq:abstractDiscrete}
a(\mathcal{S}_N \boldsymbol{f},\boldsymbol{\psi}) = \skp{\boldsymbol{f},\boldsymbol{\psi}} \qquad \forall \boldsymbol{\psi} \in \boldsymbol{X}_N.
\end{align}
\end{definition}
Let $\{\boldsymbol{\phi}_1,\dots,\boldsymbol{\phi}_N\} \subseteq \mathbf{X}_N$ be a basis of $\mathbf{X}_N$.
%
We denote the Galerkin matrix  $\mathbf{A} \in \R^{N \times N}$  by
\begin{align}
 \mathbf{A} =\left(  a(\boldsymbol{\phi}_j,\boldsymbol{\phi}_i)\right)  _ {i,j=1} ^N.
\end{align}
The translation of the problem of approximating matrix blocks of $\mathbf{A}^{-1}$ to the problem of approximating certain functions
from low dimensional spaces essentially depends on the following crucial property (A1), the existence of 
a local dual basis.
\begin{itemize}
\item[(A1)] There exist dual functions $\{\boldsymbol{\lambda}_1,\dots,\boldsymbol{\lambda}_N\} \subset \mathbf{L}^2$ satisfying
\begin{align*}
\skp{\boldsymbol{\phi}_i,\boldsymbol{\lambda}_j} = \delta_{ij}, \quad \text{and} \quad 
\Big\|\sum_{j=1}^N \mathbf{x}_j \boldsymbol{\lambda}_j\Big\|_{\mathbf{L}^2}\leq C_{\rm db}(N) \norm{\mathbf{x}}_2 
\end{align*}
for all $i,j \in \{1,\dots,N\}$ and $\mathbf{x} \in \R^N$.
Moreover, we require the $\boldsymbol{\lambda}_i$ to have local support, in the sense that 
$\#\{j \;:\; \operatorname*{supp}(\boldsymbol{\lambda}_i)\cap \operatorname*{supp}(\boldsymbol{\lambda}_j)\neq \emptyset\} \lesssim 1$ 
for all $i \in \{1,\dots,N\}$.
\end{itemize}
We denote the coordinate mappings corresponding to the basis and the dual basis by
\begin{equation*}
\Phi : \left\{
\begin{array}{ccc}
\R^N & \longrightarrow & \mathbf{X}_N \\
\mathbf{x} & \longmapsto & \sum_{j=1}^N \mathbf{x}_j \boldsymbol{\phi}_j
\end{array}
\right.,\quad \quad \quad
\Lambda : \left\{
\begin{array}{ccc}
\R^N & \longrightarrow & \mathbf{L}^2 \\
\mathbf{x} & \longmapsto & \sum_{j=1}^N \mathbf{x}_j \boldsymbol{\lambda}_j
\end{array}
\right..
\end{equation*}
The  Hilbert space transpose of  $\Lambda$ is denoted by  $\Lambda ^T$. Moreover, for $\tau \subset \{1,\dots,N\}$, 
we define the sets $D_j(\tau) := \cup _{i \in \tau } \operatorname*{supp} \boldsymbol{\lambda}_{i,j}$, 
where $\boldsymbol{\lambda}_{i,j}$ is the $j$-th component of $\boldsymbol{\lambda}_{i}$,
and write $\mathbf{L}^2(\tau):= \prod_{j=1}^{k+\ell}L^2(D_j(\tau))$.

In the following lemma,  we derive a representation formula for $\mathbf{A} ^ {-1}$ based 
on three linear operators   $ \Lambda ^T,\, \mathcal{S}_N$ and $\Lambda$. 

\begin{lemma}\label{representation-inverse}(\cite[Lem.~3.10]{AFM20}, \cite[Lem.~3.11]{AFM20})
The restriction of $\Lambda ^T$ to $\mathbf{X}_ N$ is the inverse mapping  $\Phi^{-1}$. More precisely, for all $\mathbf{x}, \mathbf{y} \in \mathbb{R}^N$ and $\mathbf{v} \in \mathbf{X}_N$, we have
\begin{align*}
\skp{\Lambda \mathbf{x},\Phi \mathbf{y}}=\skp{\mathbf{x},\mathbf{y}}_2, \qquad \Lambda ^T \Phi \mathbf{x}=\mathbf{x}, \qquad \Phi \Lambda ^T  \mathbf{v}=\mathbf{v}.
\end{align*}
The mappings $\Lambda$ and $\Lambda^T$ preserve locality, i.e, for $\tau \subset \{1,\dots,N\}$ and $\mathbf{x} \in \R^N$ with $\{i:\mathbf{x}_i \neq 0\} \subset \tau$, we have
$\operatorname*{supp}(\Lambda \mathbf{x}) \subset \prod_j D_j(\tau)$.
For $\mathbf{v} \in \mathbf{L}^2$, we have  
\begin{align*}
\norm{\Lambda^T \mathbf{v}}_{\ell^2(\tau)} \leq \norm{\Lambda}\norm{\mathbf{v}}_{\mathbf{L}^2(\tau)}.
\end{align*}
Moreover, there holds the representation formula
\begin{align*}
\mathbf{A} ^ {-1} \textbf{x}= \Lambda ^T \mathcal{S}_N \Lambda  \textbf{x} \qquad \forall \textbf{x} \in \mathbb{R}^N.
\end{align*}
\end{lemma}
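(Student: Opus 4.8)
The plan is to verify each assertion by a direct computation, using only the duality $\skp{\boldsymbol\phi_i,\boldsymbol\lambda_j}=\delta_{ij}$ from (A1), the definition of the discrete solution operator $\mathcal S_N$, and the locality of the supports of the $\boldsymbol\lambda_i$. First I would establish the pairing identity $\skp{\Lambda\mathbf x,\Phi\mathbf y}=\skp{\mathbf x,\mathbf y}_2$: by bilinearity this reduces to summing $\skp{\boldsymbol\lambda_j,\boldsymbol\phi_i}=\delta_{ij}$ against the coefficients $\mathbf x_j\mathbf y_i$. From this, $\Lambda^T\Phi\mathbf x=\mathbf x$ for all $\mathbf x\in\R^N$ follows by definition of the Hilbert-space transpose, since $\skp{\Lambda^T\Phi\mathbf x,\mathbf y}_2=\skp{\Phi\mathbf x,\Lambda\mathbf y}=\skp{\mathbf x,\mathbf y}_2$ for every $\mathbf y$. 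The identity $\Phi\Lambda^T\mathbf v=\mathbf v$ for $\mathbf v\in\mathbf X_N$ then follows by writing $\mathbf v=\Phi\mathbf x$ and using $\Phi\Lambda^T\Phi\mathbf x=\Phi\mathbf x$; this shows $\Lambda^T|_{\mathbf X_N}=\Phi^{-1}$.

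Next I would address the locality statements. If $\{i:\mathbf x_i\neq0\}\subset\tau$, then $\Lambda\mathbf x=\sum_{i\in\tau}\mathbf x_i\boldsymbol\lambda_i$, and since the $j$-th component of this sum is supported in $\bigcup_{i\in\tau}\operatorname{supp}\boldsymbol\lambda_{i,j}=D_j(\tau)$, we get $\operatorname{supp}(\Lambda\mathbf x)\subset\prod_j D_j(\tau)$ immediately. For the transpose bound, given $\mathbf v\in\mathbf L^2$ and a component index $i\in\tau$, I would write $(\Lambda^T\mathbf v)_i=\skp{\mathbf v,\Lambda\mathbf e_i}=\skp{\mathbf v,\boldsymbol\lambda_i}$; since $\boldsymbol\lambda_i$ is supported componentwise inside $D_j(\tau)$ for $i\in\tau$, only the restriction $\mathbf v|_{\mathbf L^2(\tau)}$ enters, and Cauchy--Schwarz together with $\norm{\Lambda\mathbf x}_{\mathbf L^2}\le\norm\Lambda\,\norm{\mathbf x}_2$ gives $\norm{\Lambda^T\mathbf v}_{\ell^2(\tau)}=\sup_{\norm{\mathbf x}_2\le1,\ \operatorname{supp}\mathbf x\subset\tau}\skp{\mathbf v,\Lambda\mathbf x}\le\norm\Lambda\,\norm{\mathbf v}_{\mathbf L^2(\tau)}$.

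Finally, for the representation formula $\mathbf A^{-1}\mathbf x=\Lambda^T\mathcal S_N\Lambda\mathbf x$, I would argue as follows. Fix $\mathbf x\in\R^N$ and set $\mathbf u:=\mathcal S_N(\Lambda\mathbf x)\in\mathbf X_N$, so that $a(\mathbf u,\boldsymbol\psi)=\skp{\Lambda\mathbf x,\boldsymbol\psi}$ for all $\boldsymbol\psi\in\mathbf X_N$. Write $\mathbf u=\Phi\mathbf w$ with $\mathbf w=\Lambda^T\mathbf u=\Lambda^T\mathcal S_N\Lambda\mathbf x$. Testing with $\boldsymbol\psi=\boldsymbol\phi_i$ and using the pairing identity, $(\mathbf A\mathbf w)_i=\sum_j a(\boldsymbol\phi_j,\boldsymbol\phi_i)\mathbf w_j=a(\Phi\mathbf w,\boldsymbol\phi_i)=\skp{\Lambda\mathbf x,\boldsymbol\phi_i}=\mathbf x_i$, hence $\mathbf A\mathbf w=\mathbf x$, i.e. $\mathbf w=\mathbf A^{-1}\mathbf x$, which is the claim. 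I do not expect a genuine obstacle here: the argument is a bookkeeping exercise, and the only point requiring a little care is matching the convention for the Hilbert-space transpose $\Lambda^T$ against the Euclidean inner product on $\R^N$ versus the $\mathbf L^2$ inner product, so that the identities $\Lambda^T\Phi=\mathrm{id}$ and the representation formula compose correctly.
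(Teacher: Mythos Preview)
Your proposal is correct and follows essentially the same approach as the paper. The paper only spells out the representation formula (citing \cite{AFM20} for the rest), and your argument for that part---writing $\mathbf u=\mathcal S_N\Lambda\mathbf x=\Phi\mathbf w$ and testing with $\boldsymbol\phi_i$ to obtain $\mathbf A\mathbf w=\mathbf x$---is the coordinate-wise version of the paper's computation $\skp{\mathbf A\Lambda^T\mathcal S_N\Lambda\mathbf x,\mathbf y}_2=a(\Phi\Lambda^T\mathcal S_N\Lambda\mathbf x,\Phi\mathbf y)=a(\mathcal S_N\Lambda\mathbf x,\Phi\mathbf y)=\skp{\Lambda\mathbf x,\Phi\mathbf y}=\skp{\mathbf x,\mathbf y}_2$; your additional verifications of the duality, inverse, and locality statements are straightforward and correctly executed.
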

\begin{proof}
For sake of completeness, we provide the derivation of the representation formula from \cite[Lem.~3.11]{AFM20}.
Using that $\Lambda^T = \Phi^{-1}|_{\mathbf{X}_N}$ and the definition of the discrete solution operator, we compute
\begin{align*}
\skp{\mathbf{A}\Lambda^T\mathcal{S}_N \Lambda \mathbf{x},\mathbf{y}}_2 = 
a(\Phi \Lambda^T\mathcal{S}_N \Lambda\mathbf{x},\Phi\mathbf{y}) = 
a(\mathcal{S}_N \Lambda\mathbf{x},\Phi\mathbf{y}) = \skp{\Lambda \mathbf{x},\Phi\mathbf{y}} = 
\skp{\mathbf{x},\mathbf{y}}_2
\end{align*}
for arbitrary $\mathbf{y} \in \R^N$.
\end{proof}

%
%
This lemma  is the crucial step  in the  proof of the following lemma.

\begin{lemma}\label{th:H-Matrix approximation of inverses-abstract} Let 
$\mathbf{A}$ be the Galerkin matrix, $\Lambda$ be the coordinate mapping for the dual basis, and 
$\mathcal{S}_N$ be the discrete solution operator. 
Let $\tau \times \sigma \subset \{1,\dots,N\}\times \{1, \dots, N\}$ be an admissible block and $\mathbf{W}_r \subseteq \mathbf{L}^2$ be a finite dimensional space. Then, there exist 
matrices $\mathbf{X}_{\tau\sigma} \in \R^{\abs{\tau} \times r},\mathbf{Y}_{\tau\sigma} \in \R^{\abs{\sigma} \times r}$ of rank $r \leq \dim \mathbf{W}_r$ satisfying
\begin{align*}
\norm{\mathbf{A}^{-1}|_{\tau \times \sigma} -\mathbf{X}_{\tau\sigma}\mathbf{Y}_{\tau\sigma}^T}_2 \leq \norm{\Lambda}^2 
\sup_{\substack{\boldsymbol{f}  \in \mathbf{L}^2:\\ \operatorname*{supp}(\boldsymbol{f}) \subset 
\prod_j D_j(\sigma)}}
 \frac{\inf_{\mathbf{w} \in \mathbf{W}_r}\norm{\mathcal{S}_N \boldsymbol{f}  -\mathbf{w}}_{\mathbf{L}^2(\tau)}}{\norm{\boldsymbol{f}}_{\mathbf{L}^2}}.
\end{align*}
\end{lemma}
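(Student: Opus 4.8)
The plan is to read the low-rank block off the representation formula $\mathbf{A}^{-1}=\Lambda^T\mathcal{S}_N\Lambda$ of Lemma~\ref{representation-inverse}, using the locality of $\Lambda$ to guarantee that the right-hand sides which occur are supported near $\sigma$, and the locality of the dual functions to guarantee that $\Lambda^T$ restricted to the rows in $\tau$ only probes the argument near $\tau$. The single genuinely delicate point is that, in order to obtain an honest low-rank \emph{matrix}, the $\mathbf{W}_r$-approximation of $\mathcal{S}_N\boldsymbol{f}$ has to be chosen \emph{linearly} in $\boldsymbol{f}$, so one cannot simply take the best approximant pointwise; this is resolved by replacing the best approximation with a fixed orthogonal projection, while exploiting that the $\tau$-rows of $\Lambda^T$ only see $\mathbf{L}^2(\tau)$.

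First I would fix $\mathbf{y}\in\R^{\abs{\sigma}}$, denote by $\widetilde{\mathbf{y}}\in\R^N$ its extension by zero outside $\sigma$, and set $\boldsymbol{f}:=\Lambda\widetilde{\mathbf{y}}$. By the locality statement in Lemma~\ref{representation-inverse}, $\operatorname*{supp}\boldsymbol{f}\subset\prod_j D_j(\sigma)$ and $\norm{\boldsymbol{f}}_{\mathbf{L}^2}\le\norm{\Lambda}\norm{\mathbf{y}}_2$; moreover $\boldsymbol{f}\neq 0$ whenever $\mathbf{y}\neq 0$, since the $\boldsymbol{\lambda}_i$ are linearly independent by (A1). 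Hence $\boldsymbol{f}$ is an admissible competitor for the supremum on the right-hand side. The representation formula yields $\mathbf{A}^{-1}\widetilde{\mathbf{y}}=\Lambda^T\mathcal{S}_N\boldsymbol{f}$, and restricting to the rows indexed by $\tau$ gives $\mathbf{A}^{-1}|_{\tau\times\sigma}\mathbf{y}=(\Lambda^T\mathcal{S}_N\boldsymbol{f})|_\tau$, whose $i$-th entry is $\skp{\mathcal{S}_N\boldsymbol{f},\boldsymbol{\lambda}_i}$. Since $\operatorname*{supp}\boldsymbol{\lambda}_i\subset\prod_j D_j(\tau)$ for $i\in\tau$, this depends only on the restriction of $\mathcal{S}_N\boldsymbol{f}$ to $\prod_j D_j(\tau)$, and the bound $\norm{\Lambda^T\mathbf{v}}_{\ell^2(\tau)}\le\norm{\Lambda}\norm{\mathbf{v}}_{\mathbf{L}^2(\tau)}$ of Lemma~\ref{representation-inverse} shows that $\mathbf{v}\mapsto(\Lambda^T\mathbf{v})|_\tau$ induces a bounded operator $\mathbf{L}^2(\tau)\to\R^{\abs{\tau}}$ of norm at most $\norm{\Lambda}$.

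Next I would introduce the $\mathbf{L}^2(\tau)$-orthogonal projection $\Pi$ onto the finite-dimensional subspace $\{\mathbf{w}|_{\prod_j D_j(\tau)}\,:\,\mathbf{w}\in\mathbf{W}_r\}\subset\mathbf{L}^2(\tau)$, whose dimension $r$ satisfies $r\le\dim\mathbf{W}_r$, and define $\mathbf{B}\in\R^{\abs{\tau}\times\abs{\sigma}}$ by letting $\mathbf{B}\mathbf{y}$ be the vector whose $i$-th entry, $i\in\tau$, is $\skp{\Pi\big((\mathcal{S}_N\Lambda\widetilde{\mathbf{y}})|_{\prod_j D_j(\tau)}\big),\boldsymbol{\lambda}_i}$. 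This map is linear in $\mathbf{y}$ and factors through the $r$-dimensional range of $\Pi$, so $\operatorname{rank}\mathbf{B}\le r$ and one may write $\mathbf{B}=\mathbf{X}_{\tau\sigma}\mathbf{Y}_{\tau\sigma}^T$ with $\mathbf{X}_{\tau\sigma}\in\R^{\abs{\tau}\times r}$ and $\mathbf{Y}_{\tau\sigma}\in\R^{\abs{\sigma}\times r}$.

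Finally I would estimate the error. For $\mathbf{y}$ with $\norm{\mathbf{y}}_2=1$, subtracting the entrywise expressions for $\mathbf{A}^{-1}|_{\tau\times\sigma}\mathbf{y}$ and $\mathbf{B}\mathbf{y}$ and applying the operator-norm bound above gives $\norm{(\mathbf{A}^{-1}|_{\tau\times\sigma}-\mathbf{B})\mathbf{y}}_2\le\norm{\Lambda}\,\norm{(\mathcal{S}_N\boldsymbol{f})|_{\prod_j D_j(\tau)}-\Pi\big((\mathcal{S}_N\boldsymbol{f})|_{\prod_j D_j(\tau)}\big)}_{\mathbf{L}^2(\tau)}$. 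Because $\Pi$ is the orthogonal projection onto $\{\mathbf{w}|_{\prod_j D_j(\tau)}:\mathbf{w}\in\mathbf{W}_r\}$, the last factor equals $\inf_{\mathbf{w}\in\mathbf{W}_r}\norm{\mathcal{S}_N\boldsymbol{f}-\mathbf{w}}_{\mathbf{L}^2(\tau)}$. Multiplying and dividing by $\norm{\boldsymbol{f}}_{\mathbf{L}^2}$, using $\norm{\boldsymbol{f}}_{\mathbf{L}^2}\le\norm{\Lambda}$, and recalling $\operatorname*{supp}\boldsymbol{f}\subset\prod_j D_j(\sigma)$ bounds the right-hand side by $\norm{\Lambda}^2$ times the stated supremum; taking the supremum over $\norm{\mathbf{y}}_2=1$ on the left finishes the proof. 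As indicated, the only real obstacle is to carry out this support bookkeeping consistently with the replacement of the non-linear best approximation by the linear projection $\Pi$ without losing the best-approximation error — which is possible precisely because the $\tau$-rows of $\Lambda^T$ only involve $\mathbf{L}^2(\tau)$.
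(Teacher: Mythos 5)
Your proposal is correct and follows essentially the same route as the paper: both rest on the representation formula $\mathbf{A}^{-1}=\Lambda^T\mathcal{S}_N\Lambda$, the support-preservation and $\ell^2(\tau)$-boundedness of $\Lambda$, $\Lambda^T$, and an orthogonal projection onto an $r$-dimensional space built from $\mathbf{W}_r$. The only (cosmetic) difference is that you project in $\mathbf{L}^2(\tau)$ onto $\mathbf{W}_r|_{\prod_j D_j(\tau)}$ before applying $\Lambda^T$, whereas the paper projects in $\ell^2(\tau)$ onto $\widehat{\mathbf{W}}=(\Lambda^T\mathbf{W}_r)|_\tau$ afterwards; both yield the identical bound.
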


\begin{proof}
We use the representation formula  from Lemma \ref{representation-inverse} to prove the asserted estimate. 
With the given space $\mathbf{W}_r$, we define $\mathbf{X}_{\tau\sigma} \in \R^{\abs{\tau}\times r}$ columnwise as vectors from an orthonormal basis 
of the space $\widehat{\mathbf{W}}:=(\Lambda^T \mathbf{W}_r)|_\tau$. 
Then, the product $\mathbf{X}_{\tau\sigma} \mathbf{X}_{\tau\sigma}^T$ is the orthogonal projection onto $\widehat{\mathbf{W}}$.
Defining $\mathbf{Y}_{\tau\sigma} :=  (\mathbf{A}^{-1}|_{\tau \times \sigma})^T \mathbf{X}_{\tau\sigma}$, we can compute for all 
$\mathbf{x} \in \R^N$ with 
$\{i\;:\;\mathbf{x}_i \neq 0\} \subset \sigma$ that
\begin{align*}
 \norm{(\mathbf{A}^{-1}|_{\tau \times \sigma} -\mathbf{X}_{\tau\sigma}\mathbf{Y}_{\tau\sigma}^T)\mathbf{x}|_\sigma}_{\ell^2(\tau)} &= 
  \norm{(\mathbf{I}-\mathbf{X}_{\tau\sigma}\mathbf{X}_{\tau\sigma}^T)(\mathbf{A}^{-1}\mathbf{x})|_\sigma}_{\ell^2(\tau)} = 
  \inf_{\widehat{\mathbf{w}}\in \widehat{\mathbf{W}}}  \norm{(\mathbf{A}^{-1}\mathbf{x})|_\sigma-\widehat{\mathbf{w}}}_{\ell^2(\tau)} \\
  &\stackrel{Lem.~\ref{representation-inverse}}{=}  \inf_{\mathbf{w}\in \mathbf{W}_r}  \norm{\Lambda^T(\mathcal{S}_N\Lambda\mathbf{x}-\mathbf{w})}_{\ell^2(\tau)} \leq 
  \norm{\Lambda} \inf_{\mathbf{w}\in \mathbf{W}_r}  \norm{\mathcal{S}_N\Lambda\mathbf{x}-\mathbf{w}}_{\mathbf{L}^2(\tau)}.
\end{align*}
Dividing both sides by $\norm{\mathbf{x}}_2$,
substituting $\boldsymbol{f}  :=\Lambda \mathbf{x}$ and using that the mapping $\Lambda$ preserves supports, we get the 
desired result.
\end{proof}

Finally, the question of approximating the whole matrix $\mathbf{A}^{-1}$ can be reduced to the question of 
blockwise approximation. For arbitrary matrices 
$\mathbf{M} \in \R^{N\times N}$, and an arbitrary block partition $P$ of $\{1,\dots,N\} \times \{1,\dots,N\}$ 
this follows from
\begin{align*}
\norm{\mathbf{M}}_2 \leq N^2\max\{\norm{\mathbf{M}|_{\tau\times \sigma}}_2 : (\tau,\sigma) \in P\}.
\end{align*}
If the block partition $P$ is based on a cluster tree $\mathbb{T}_{\mathcal{I}}$, the more refined estimate
\begin{align}\label{eq:refinedblockestimate}
\norm{\mathbf{M}}_2 \leq 
C_{\rm sp} \operatorname*{depth}(\mathbb{T}_{\mathcal{I}})\max\{\norm{\mathbf{M}|_{\tau\times \sigma}}_2 : (\tau,\sigma) \in P\}
\end{align}
holds, see 
{\cite{GrasedyckDissertation}, \cite[Lemma 6.5.8]{HackbuschBuch}, \cite{BoermBuch}}. \medskip

In Section~\ref{sec:proofThmHmatrix}, we give explicit definitions of the dual basis for the FEM-BEM coupling model problem.

\subsection{Abstract setting - low dimensional approximation}

We present a general framework that only uses a Caccioppoli type estimate for the construction of 
exponentially convergent low dimensional approximations. 

Let $M \in \N$ be fixed. For $R >0$ let 
$\CB_R:=\lbrace B_i \rbrace _{i=1} ^M$ be a collection of boxes, i.e., $B_i \in \{B_R\cap\Omega,B_R,B_R\backslash\Gamma\}$ for all $i=1,\dots,M$, 
where $B_R$ denotes a box of side length $R$. The choice, which of the three sets is taken for each index $i$, is determined by the application and fixed.

We write $\CB \subset \CB' := \lbrace B_i' \rbrace _{i=1} ^M$ meaning that $B_i \subset B_i'$ for all $i=1,\dots,M$. 
For a parameter $\delta > 0$, we call $ \CB_R^\delta := \{B_i^\delta\}_{i=1}^M$ a collection of $\delta$-enlarged boxes of $\CB_R$, if it satisfies
$$B_i^\delta \in \{B_{R+2\delta}\cap\Omega,B_{R+2\delta},B_{R+2\delta}\backslash\Gamma\} \;\; \forall i=1,\dots,M, \qquad \text{ and } \quad
 \CB^\delta_R  \supset \CB_R,$$
where $B_R$ and $B_{R+2\delta}$ are concentric boxes. Defining  $\operatorname*{diam}(\CB_R) := \max\{\operatorname{diam}(B_i), i=1,\dots,M\},$ we get
\begin{align}\label{eq:diameters}
\diam(\CB_R^\delta)\leq \diam(\CB_R) + 2\sqrt{d}\delta.
\end{align}
In order to simplify notation, we drop the subscript $R$ and write $\CB := \CB_R$ in the following abstract setting.

We use the notation $\mathbf{H}^1(\CB)$ to abbreviate the product space $\mathbf{H}^1(\CB) = \prod_{i=1}^M H^1(B_i)$,
and write $\norm{\mathbf{v}}_{\mathbf{H}^1(\CB)}^2:= \sum_{i=1}^M \norm{\mathbf{v}_i}_{H^1(B_i)}^2$ for the product norm.

\begin{remark}
For the application of the present paper, we chose boxes (or suitable subsets of those) for the sets $B_i$. We also mention that different constructions can be employed as
demonstrated in \cite{AFM20}, where a construction for non-uniform grids 
is presented and where the metric is not the Euclidean one but one that is 
based on the underlying finite element mesh.
\eremk
\end{remark}

In the following, we fix some assumptions on the collections $\CB$ 
of interest and the norm $\triplenorm{\cdot}_{\CB}$ on  
$\mathcal{B}$ we derive our approximation result in. In essence,
we want a norm weaker than than the classical $H^1$-norm that has the correct scaling (e.g., an $L^2$-type norm).

\begin{itemize}
\item[(A2)] Assumptions on the approximation norm $\triplenorm{\cdot}_{\CB}$: 
 For each $\CB$, the Hilbertian norm 
$\triplenorm{\cdot}_{\CB}$ is a norm on $\mathbf{H}^1(\CB)$ and such that 
for any $\delta >0$ and enlarged boxes $\CB^\delta$ and $H>0$ 
there is a discrete space $\mathbf{V}_{H,\CB^\delta} \subset \mathbf{H}^1(\CB^\delta)$ of dimension 
$\dim\mathbf{V}_{H,\CB^\delta} = C (\diam(\CB^\delta)/H)^{Md}$ and a
linear operator
$Q_H : \mathbf{H}^1(\CB^\delta) \rightarrow \mathbf{V}_{H,\CB^\delta}$ such that
\begin{align*}
\triplenorm{\Bv-Q_H \Bv}_{\CB} \leq  C_{\rm Qap}H (\norm{\nabla\Bv}_{L^2(\CB^\delta)}+\delta^{-1}\triplenorm{\Bv}_{\CB^\delta})
\end{align*}
with a constant $C_{\rm Qap}>0$ that does not depend on $\CB,\CB^\delta,\delta,$ and $N$.
\end{itemize}

Finally, we require a Caccioppoli type estimate with respect to the norm 
from (A2).  
\begin{itemize}\label{Caccioppoli-abstract}
 \item[(A3)]  Caccioppoli type estimate: For each $\CB$,  
$\delta>0$ and collection $\CB^\delta$  of $\delta$-enlarged boxes
with $\delta \geq C_{\rm Set}(N)$ with a fixed constant $C_{\rm Set}(N)>0$ 
that may depend on $N$, there is a subspace 
$\H_h(\CB^\delta)\subset \mathbf{H}^1(\CB^\delta)$ such that for all 
 $\mathbf{v} \in \H_h(\CB^\delta)$ the inequality
\begin{align}\label{Caccioppoli inequality}
\left\| \nabla{\Bv }\right\|_ {L^2(\CB) } \le 
C_{\rm Cac}\frac{\diam(\CB)^{\alpha-1}}{\delta^{\alpha}} \triplenorm{\Bv}_{\CB^\delta}
\end{align}
holds. 
Here, the constants $C_{\rm Cac}>0$ and $\alpha \ge 1$ do not depend on $\CB,\CB^\delta$, $\delta$, and $N$.

We additionally assume the spaces $\H_h(\CB^\delta)$ to be finite dimensional and nested, i.e., 
$\H_h(\CB') \subset \H_h(\CB)$ for $\CB \subset \CB'$.
\end{itemize}

By $\Pi_{h,\mathcal{B}}$, we denote the orthogonal projection $\Pi_{h,\mathcal{B}}: \mathbf{H}^1(\CB) \rightarrow \H_h(\CB) $ onto that space
with respect to the norm $\triplenorm{\cdot}_{\CB}$,
which is well-defined since, by assumption, $\H_h(\CB) $ is closed.

\begin{lemma} [single-step approximation]
\label{H-Matrix Approximation-abstract} 
\label{H-Matrix Approximation-Abstract}
 Let $2\diam(\Omega) \geq \delta \geq 2C_{\rm Set}(N)$ with the constant $C_{\rm Set}(N)$ from (A3), 
$\CB$  be a given collections of boxes and $\CB \subset \CB^{\delta/2}\subset \CB^\delta$ be enlarged boxes of $\CB$. 
 Let $\triplenorm{\cdot}_{\CB^\delta}$ be a norm on $\mathbf{H}^1(\CB^\delta)$ such that {\rm (A2)} holds for the sets $\CB \subset \CB^{\delta/2}$.  
 Let $\mathbf{v} \in \H_h(\CB^\delta)$ meaning that 
 {\rm (A3)} holds for the sets $\CB^{\delta/2},\CB^\delta$. Then, there exists a space $\mathbf{W}_1$ of dimension 
 $\dim \mathbf{W} _1 \leq C_{\rm ssa}  \left(\frac{\diam(\CB^\delta)}{\delta}\right) ^ {\alpha Md}$
 such that 
\begin{align*}
 \inf_{\mathbf{w}\in \mathbf{W}_1}\triplenorm{\Bv - \mathbf{w}}_{\CB}   \le \frac{1}{2} \triplenorm{\Bv}_{\CB^\delta}.
  \end{align*}
\end{lemma}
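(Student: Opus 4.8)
The plan is to combine the Caccioppoli estimate (A3) with the approximation operator from (A2) in a single shot. First I would apply (A2): since $\mathbf v \in \mathbf H^1(\CB^{\delta/2})$ (because $\CB^{\delta/2}\subset\CB^\delta$ and $\mathbf v \in \H_h(\CB^\delta)\subset \mathbf H^1(\CB^\delta)$), choosing the mesh parameter $H$ in (A2) appropriately and using the enlargement from $\CB$ to $\CB^{\delta/2}$ (so the ``$\delta$'' of (A2) is really $\delta/2$), I obtain a discrete space $\mathbf V_{H,\CB^{\delta/2}}$ and operator $Q_H$ with
\begin{align*}
\triplenorm{\mathbf v - Q_H \mathbf v}_{\CB} \le C_{\rm Qap} H\left(\norm{\nabla \mathbf v}_{L^2(\CB^{\delta/2})} + 2\delta^{-1}\triplenorm{\mathbf v}_{\CB^{\delta/2}}\right).
\end{align*}
The dimension of $\mathbf V_{H,\CB^{\delta/2}}$ is $C(\diam(\CB^{\delta/2})/H)^{Md}$.

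Next I would bound the gradient term $\norm{\nabla \mathbf v}_{L^2(\CB^{\delta/2})}$ using the Caccioppoli inequality (A3) applied to the pair of collections $\CB^{\delta/2} \subset \CB^\delta$, whose enlargement parameter is again $\delta/2 \ge C_{\rm Set}(N)$ by hypothesis; since $\mathbf v \in \H_h(\CB^\delta)$ this gives
\begin{align*}
\norm{\nabla \mathbf v}_{L^2(\CB^{\delta/2})} \le C_{\rm Cac}\frac{\diam(\CB^{\delta/2})^{\alpha-1}}{(\delta/2)^{\alpha}}\triplenorm{\mathbf v}_{\CB^\delta}.
\end{align*}
Combining this with the monotonicity $\triplenorm{\mathbf v}_{\CB^{\delta/2}} \le \triplenorm{\mathbf v}_{\CB^\delta}$ (the norm over the smaller collection is dominated by that over the larger one, which is built into the nesting of the spaces and the structure of the norms in (A2)), and using $\diam(\CB^{\delta/2})\le \diam(\CB^\delta)$ together with $\delta \le 2\diam(\Omega)$ to absorb the lower-order $\delta^{-1}\triplenorm{\cdot}$ term into the same $\diam^{\alpha-1}/\delta^\alpha$ scaling, I arrive at
\begin{align*}
\triplenorm{\mathbf v - Q_H \mathbf v}_{\CB} \le C_1\, H\, \frac{\diam(\CB^\delta)^{\alpha-1}}{\delta^\alpha}\,\triplenorm{\mathbf v}_{\CB^\delta}
\end{align*}
for a constant $C_1$ depending only on $C_{\rm Qap}$, $C_{\rm Cac}$, $\alpha$, $d$, and $\Omega$.

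Finally I would choose $H$ to make the prefactor equal to $1/2$: set $H := \delta^\alpha/(2C_1 \diam(\CB^\delta)^{\alpha-1})$, and take $\mathbf W_1 := \mathbf V_{H,\CB^{\delta/2}}$, so that $\mathbf w := Q_H \mathbf v$ witnesses the claimed bound. The dimension is then
\begin{align*}
\dim \mathbf W_1 = C\left(\frac{\diam(\CB^{\delta/2})}{H}\right)^{Md} \le C\left(\frac{2C_1 \diam(\CB^\delta)^\alpha}{\delta^\alpha}\right)^{Md} = C_{\rm ssa}\left(\frac{\diam(\CB^\delta)}{\delta}\right)^{\alpha Md},
\end{align*}
as asserted. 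The only genuinely delicate point is bookkeeping the two nested enlargements correctly — (A2) must be invoked on $\CB\subset\CB^{\delta/2}$ with enlargement $\delta/2$, while (A3) must be invoked on $\CB^{\delta/2}\subset\CB^\delta$ with enlargement $\delta/2$ — and checking that all the constants and the hypothesis $\delta \ge 2C_{\rm Set}(N)$ line up so that both (A2) and (A3) are actually applicable; the rest is the elementary choice of $H$ and a dimension count. I expect no real obstacle beyond this careful matching of the geometric parameters.
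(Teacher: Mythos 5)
Your argument is correct and follows essentially the same route as the paper: apply (A2) on the pair $\CB\subset\CB^{\delta/2}$, control the resulting gradient term via the Caccioppoli estimate (A3) on the pair $\CB^{\delta/2}\subset\CB^{\delta}$, and then fix $H$ so that the prefactor equals $1/2$; the dimension count is identical. The one substantive difference is the choice of $\mathbf{W}_1$: you take $\mathbf{W}_1=\mathbf{V}_{H,\CB^{\delta/2}}$ and use $\mathbf{w}=Q_H\Bv$ directly, whereas the paper sets $\mathbf{W}_1:=\Pi_{h,\CB}Q_H\H_h(\CB^{\delta})$, composing with the $\triplenorm{\cdot}_{\CB}$-orthogonal projection onto $\H_h(\CB)$ (which costs nothing in the error bound, since $\Bv\in\H_h(\CB)$ by nestedness and $\Pi_{h,\CB}$ is non-expansive). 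For the lemma as literally stated your simpler choice suffices, but the projection is not cosmetic: the multi-step Lemma~\ref{multi step approximation} iterates the single-step result and needs $\Bv-\widehat{\mathbf{v}}_1$ to lie again in a space $\H_h(\CB^{(L-1)\delta})$ so that (A3) can be reapplied, which requires $\mathbf{W}_1\subset\H_h(\CB^{\delta})$. So if you intend to feed your version into the iteration, you should either add the projection or otherwise ensure the approximant stays in the class $\H_h$.
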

\begin{proof}
We set $\mathbf{W}_1 := \Pi_{h,\mathcal{B}}Q_H \H_h(\CB^\delta) \subset \mathbf{V}_{H,\CB^\delta} $.
Since $\mathbf{v} \in \H_h(\CB^\delta)$, we obtain from (A2) and (A3) that
\begin{align}\label{71}
 \triplenorm{\Bv - \Pi_{h,\mathcal{B}}Q_H \Bv }_{\CB} &= \triplenorm{\Pi_{h,\mathcal{B}}(\Bv - Q_H \Bv)}_{\CB} \leq 
 \triplenorm{\Bv - Q_H \Bv}_{\CB} \leq C_{\rm Qap}H( \norm{\nabla\Bv}_{L^2(\CB^{\delta/2})} + 2\delta^{-1}\triplenorm{\Bv}_{\CB^{\delta/2}}) \nonumber \\
 &\leq C_1 C_{\rm Qap}C_{\rm Cac}\frac{\diam(\CB^{\delta/2})^{\alpha-1}}{\delta^\alpha} H \triplenorm{\Bv}_{\CB^\delta}
\end{align}
with a constant $C_1$ depending only on $\Omega$ since $\alpha \geq 1$ and $\delta \leq 2 \diam(\Omega)$.
With the choice $H = \frac{\delta^\alpha}{2C_1C_{\rm Qap}C_{\rm Cac}\diam(\CB^\delta)^{\alpha-1}}$, we get the asserted error bound.
Since $\mathbf{W}_1 \subset \mathbf{V}_{H,\CB^\delta}$ and by choice of $H$, we have
\begin{align*}
\dim \mathbf{W}_1 \le C  \left(  \frac{\diam(\CB^\delta)}{H}\right) ^ {Md} 
\le C\left( 2C_1 C_{\rm Qap}C_{\rm Cac} \frac{\diam(\CB^\delta)^\alpha}{\delta^\alpha} \right) ^ {Md} =: C_{\rm ssa}  \left(\frac{\diam(\CB^\delta)}{\delta} \right) ^ {\alpha Md} , 
\end{align*}
which concludes the proof.
\end{proof}

Iterating the single-step approximation on concentric boxes leads to exponential convergence.

\begin{lemma}[multi-step approximation] 
\label{multi step approximation}
Let $L \in \N$ and 
$\delta \geq 2 C_{\rm Set}(N)$ with the constant $C_{\rm Set}(N)$ from (A3).  Let
 $\CB$ be a collection of boxes and $\CB^{\delta L} \supset \CB$ a collection of $\delta L$-enlarged boxes.
Then, there exists a space 
 $\mathbf{W}_L \subseteq \mathcal{H}_h({\CB ^{\delta L}})$ such that 
 for all $\Bv \in \mathcal{H}_h(\CB ^{\delta L}) $ we have
\begin{align*}
 \inf_{\mathbf{w}\in \mathbf{W}_L }\triplenorm{\Bv - \mathbf{w}}_{ \CB}   \le 2 ^ {-L } \triplenorm{\Bv}_{\CB ^{\delta L}},
  \end{align*} 
  and 
\begin{align*}
\dim \mathbf{W} _L  \leq C _ {\rm dim}\Big(L+ \frac{\diam (\CB)}{\delta}\Big)^{\alpha Md+1}.
\end{align*}
\end{lemma}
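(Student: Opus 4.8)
The plan is to iterate the single-step approximation of Lemma~\ref{H-Matrix Approximation-abstract} across a telescoping family of $L$ nested collections of boxes, doubling the error gain at each step. Concretely, I would introduce the intermediate collections $\CB^{(j)} := \CB^{\delta j}$ for $j=0,1,\dots,L$, so that $\CB = \CB^{(0)} \subset \CB^{(1)} \subset \cdots \subset \CB^{(L)} = \CB^{\delta L}$, and note that $\CB^{(j+1)}$ is a collection of $\delta$-enlarged boxes of $\CB^{(j)}$ (with the intermediate layer $\CB^{(j),\delta/2}$ sitting in between). The hypothesis $\delta \geq 2C_{\rm Set}(N)$ guarantees that (A3) is applicable at every level, and nestedness of the spaces $\H_h(\cdot)$ ensures $\H_h(\CB^{(L)}) \subset \H_h(\CB^{(j)})$ for all $j$, so that a function $\Bv \in \H_h(\CB^{\delta L})$ may be fed into the single-step lemma at each scale.

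First I would apply Lemma~\ref{H-Matrix Approximation-abstract} with the pair $\CB^{(L-1)} \subset \CB^{(L)}$ to obtain a space $\widetilde{\mathbf{W}}_1 \subset \H_h(\CB^{(L)})$ (note $\mathbf{W}_1 = \Pi_{h,\mathcal{B}}Q_H\H_h(\CB^\delta)$ is a subspace of $\H_h(\CB^{(L-1)})$, hence by nestedness lies in the relevant $\H_h$-spaces) with $\dim \widetilde{\mathbf{W}}_1 \leq C_{\rm ssa}(\diam(\CB^{(L)})/\delta)^{\alpha Md}$ and $\inf_{\mathbf{w} \in \widetilde{\mathbf{W}}_1}\triplenorm{\Bv - \mathbf{w}}_{\CB^{(L-1)}} \leq \tfrac12 \triplenorm{\Bv}_{\CB^{(L)}}$. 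Then I would recurse: for the best approximant $\mathbf{w}_1$ at level $L-1$, the error $\Bv - \mathbf{w}_1$ again lies in $\H_h(\CB^{(L-1)})$ (using that $\widetilde{\mathbf{W}}_1 \subset \H_h(\CB^{(L-1)})$ and $\H_h$ is a linear space), so I apply the single-step lemma to the pair $\CB^{(L-2)} \subset \CB^{(L-1)}$, and so on. After $L$ steps the errors multiply to $2^{-L}$ while the total approximation space $\mathbf{W}_L$ is the sum of the $L$ incremental spaces, each of dimension at most $C_{\rm ssa}(\diam(\CB^{(j)})/\delta)^{\alpha Md}$. Using \eqref{eq:diameters}, $\diam(\CB^{(j)}) \leq \diam(\CB) + 2\sqrt d \delta j \lesssim \delta(j + \diam(\CB)/\delta)$, so each summand is bounded by $C(L + \diam(\CB)/\delta)^{\alpha Md}$, and summing over $j=1,\dots,L$ gives $\dim\mathbf{W}_L \leq C_{\rm dim}(L + \diam(\CB)/\delta)^{\alpha Md + 1}$, matching the claim.

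The main obstacle — and the point requiring the most care — is the bookkeeping that keeps every intermediate error inside a space to which (A3) still applies: one must verify that the single-step space $\mathbf{W}_1$ produced by Lemma~\ref{H-Matrix Approximation-abstract} is genuinely a subspace of $\H_h$ at the appropriate (smaller) scale, so that the residual $\Bv - \mathbf{w}$ can be re-inserted into the Caccioppoli machinery at the next step. This is exactly why (A3) includes the nestedness requirement $\H_h(\CB') \subset \H_h(\CB)$ for $\CB \subset \CB'$ and why $\mathbf{W}_1$ was defined via the projection $\Pi_{h,\mathcal{B}}$ onto $\H_h(\CB)$ in the proof of the single-step lemma. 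A secondary, purely technical point is the telescoping of the dimension bounds: one should check that $C_{\rm ssa}$ can be absorbed into $C_{\rm dim}$ uniformly, and that replacing $\diam(\CB^{(j)})/\delta$ by $j + \diam(\CB)/\delta$ only costs a constant. Both are routine once the scale indices are set up correctly, so the proof is essentially an induction on $L$ with the single-step lemma as the inductive engine.
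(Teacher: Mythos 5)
Your proposal is correct and follows essentially the same route as the paper's proof: iterate the single-step lemma on the nested collections $\CB^{\delta \ell}$ from the outermost inward, use nestedness of $\H_h$ to keep each residual in the Caccioppoli framework, take $\mathbf{W}_L$ as the sum of the incremental spaces, and bound the dimension via $\diam(\CB^{\delta\ell}) \lesssim \diam(\CB) + \ell\delta$ and summation over $\ell$. No substantive differences.
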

\begin{proof}
The assumptions on $\CB$ and $\CB^{\delta L}$ allow for the construction of
a sequence of nested enlarged boxes $\CB \subseteq \CB ^\delta \subseteq \CB^{2\delta} \subseteq \ldots \subseteq \CB ^{\delta L}$ satisfying 
$\diam (\CB ^{\ell \delta}) \le \diam (\CB) + C\ell \delta $.

We   iterate the approximation result of Lemma~\ref{H-Matrix Approximation-abstract} on the sets $\CB^{\delta\ell}$, $\ell = L,\ldots,1$. 
For $\ell =L$,  Lemma~\ref{H-Matrix Approximation-abstract} applied with the sets $\CB^{(L-1)\delta} \subset \CB^{\delta L}$  
provides  a subspace 
$\mathbf{V}_1 \subset \mathcal{H}_N(\CB ^{\delta L})$ with 
$
\dim \mathbf{V}_1 \le  C \Big(\frac{\diam (\CB^{\delta L })}{\delta} \Big)^{\alpha Md}
$
such that 
\begin{align} \label{eq:approxchoiceH-abstract}
\inf_{\widehat{\mathbf{v}}_1 \in \mathbf{V}_1}	\triplenorm{\Bv -\widehat{\mathbf{v}}_1 }_{\CB^{(L-1)\delta} }
	\le 2 ^ {-1} \triplenorm{\Bv}_{\CB ^{\delta L}}.
	\end{align}
For  $\widehat{\mathbf{v}}_1 \in \mathbf{V}_1 $, we have  $(\Bv - \widehat{\mathbf{v}}_1)\in 
		\mathcal{H}_N(\CB ^{(L-1)\delta})$, 
so we can use Lemma \ref{H-Matrix Approximation-abstract}  again with the sets $\CB^{(L-2)\delta} \subset \CB^{(L-1)\delta}$, and get a subspace 
	$\mathbf{V}_2$ of $\mathcal{H}_N(\CB ^{(L-2)\delta})$ with 
 $\dim \mathbf{V}_2 \le  C\big(\frac{\diam (\CB ^{(L-1)\delta})}{\delta}\big)^{\alpha Md} $.
	This implies
	 \begin{align} \label{eq:approxchoiceH2-abstract}
	 \inf_{\widehat{\mathbf{v}}_2 \in \mathbf{V}_2} \inf_{\widehat{\mathbf{v}}_1 \in \mathbf{V}_1}	\triplenorm{(\Bv - \widehat{\mathbf{v}}_1) -\widehat{\mathbf{v}}_2 }_{\CB^{(L-2)\delta} }
	 \leq  2^{-1} \inf_{\widehat{\mathbf{v}}_1 \in \mathbf{V}_1}	\triplenorm{\Bv - \widehat{\mathbf{v}}_1}_{\CB^{(L-1)\delta} }
	 \le 2 ^ {-2} \triplenorm{\Bv}_{\CB ^{\delta L}}.
	 	\end{align}
	 	Continuing this process $L -2$ times leads to the subspace 
	 	$\mathbf{W}_L := \bigoplus  \limits _{\ell=1} ^L \mathbf{V}_\ell$ 
	 	of $\mathcal{H}_N({\CB ^{\delta L}})$
	 	with   dimension 
	 	\begin{align*}
	 	\dim \mathbf{W}_L  &\le C  \sum_{\ell=1}^{L}\Big(\frac{\diam (\CB ^{\delta\ell})}{\delta} \Big)^{\alpha Md}
	 	\le C \sum_{\ell=1}^{L}\Big(\frac{\diam (\CB)}{\delta}+ \ell  \Big)^{\alpha Md}\\
	 	& \leq C _ {\rm dim}\Big( L+ \frac{\diam (\CB)}{\delta} \Big)^{\alpha Md+1},
	 	\end{align*}
	 which finishes the proof.
\end{proof}

\subsection{Application of the abstract framework for the FEM-BEM couplings}
\label{sec:proofThmHmatrix}

In this section, we specify the assumptions (A1)--(A3) for the FEM-BEM couplings. 

\subsubsection{The local dual basis}
In the setting of Section~\ref{sec:abstractMatrixtoFunction}, we have $\mathbf{X}= H^1(\Omega) \times H^{-1/2}(\Gamma)$.
In order to suitably represent the data $f,u_0,\varphi_0$ in \eqref{eq:model}, we understand the discrete space 
$S^{1,1}(\mathcal{T}_h) \simeq S^{1,1}_0(\mathcal{T}_h) \times S^{1,1}(\mathcal{K}_h)  \subset L^2(\Omega)\times L^2(\Gamma)$, where 
$S^{1,1}_0(\mathcal{T}_h) := S^{1,1}(\mathcal{T}_h) \cap H^1_0(\Omega)$. Having identified $S^{1,1}(\mathcal{T}_h)$ with 
$S^{1,1}_0(\mathcal{T}_h) \times S^{1,1}(\mathcal{K}_h)$, we view the full FEM-BEM coupling problem as one as approximating 
in 
$S^{1,1}_0(\mathcal{T}_h) \times S^{1,1}(\mathcal{K}_h) \times S^{0,0}(\mathcal{K}_h)$. That is, we set 
$k=1$ and $\ell=2$, and consider $\mathbf{L}^2 = L^2(\Omega) \times L^2(\Gamma)\times L^2(\Gamma)$ for all three FEM-BEM couplings.
The discrete space $\mathbf{X}_N =S^{1,1}_0(\mathcal{T}_h) \times S^{1,1}(\mathcal{K}_h) \times S^{0,0}(\mathcal{K}_h) \subset \mathbf{L}^2$
has dimension $N = n_1+n_2+m$, where $n_1 = \operatorname*{dim}(S^{1,1}_0(\T_h))$, $n_2 = \operatorname*{dim}(S^{1,1}(\mathcal{K}_h))$ ($n_1+n_2 = n$) and $m = \operatorname*{dim}(S^{0,0}(\mathcal{K}_h))$, 
and it remains to show (A1). \medskip
 
The dual functions $\boldsymbol{\lambda}_i$ are constructed by use of $L^2$-dual bases for 
$S^{1,1}(\T_h)$ and $S^{0,0}(\mathcal{K}_h)$. 
\cite[Sec.~3.3]{AFM20} gives an explicit construction of a suitable dual basis $\{\lambda_i^\Omega\,:\, i=1,\dots,n_1\}$ 
for $S^{1,1}_0(\T_h)$.
This is done elementwise in a discontinuous fashion, i.e., $\lambda_i^\Omega \in S^{1,0}(\T_h) \subset L^2(\Omega)$,
where each $\lambda_i^\Omega$ is non-zero only on one element of $\T_h$ (in the patch of the hat function $\xi_i$), and the function on this element is 
given by the push-forward of a dual shape function on the reference element. Moreover, the local stability 
estimate
\begin{align}
\label{eq:stab-1}
\Big\|\sum_{j=1}^n \mathbf{x}_j \lambda_j^\Omega\Big\|_{L^2(\Omega)}\leq h^{-d/2} \norm{\mathbf{x}}_2 
\end{align}
holds for all $\mathbf{x}\in \R^n$, and we have $\operatorname{supp} \lambda_i^\Omega \subset \operatorname{supp} \xi_i$.
We note that the zero boundary condition is irrelevant for the construction. 
The same can be done for the boundary degrees of freedom, i.e., 
there exists a dual basis $\{\lambda_i^\Gamma\,:\, i=1,\dots,n_2\}$ with 
the analogous stability and support properties.

For the boundary degrees of freedom in $S^{0,0}(\mathcal{K}_h)$, the dual mappings
are given by $\mu_i^\Gamma := \chi_i/\norm{\chi_i}_{L^2(\Omega)}^2$, i.e., the dual basis coincides -- up to scaling -- with the given basis 
$\{\chi_i\,:\, i=1,\dots, m\}$ of $S^{0,0}(\mathcal{K}_h)$. With \eqref{eq:basisb}, this gives 
\begin{align}
\label{eq:stab-2}
\Big\|\sum_{j=1}^m \mathbf{y}_j \mu_j^\Gamma\Big\|_{L^2(\Omega)}\leq h^{-(d-1)/2} \norm{\mathbf{y}}_2
\end{align}
for all $\mathbf{y} \in \R^m$. \medskip

Now, the dual basis is defined as 
$\boldsymbol{\lambda}_i := (\lambda^{\Omega}_i,0,0)$ for $i = 1,\dots,n_1$,  
$\boldsymbol{\lambda}_{i+n_1} := (0,\lambda^{\Gamma}_i,0)$ for $i = 1,\dots,n_2$ and 
$\boldsymbol{\lambda}_{i+n} := (0,0,\mu^{\Gamma}_i)$ for $i = 1,\dots,m$, 
and \eqref{eq:stab-1}, \eqref{eq:stab-2} together with the analogous one
for the $\lambda_i^\Gamma$ show (A1).


\subsubsection{Low dimensional approximation}

{\bf The sets $\CB$, $\CB^\delta$ and the norm $\triplenorm{\cdot}_{\CB}$}

We take $M=3$ and choose collections $\CB = \CB_R := \{B_R\cap\Omega,B_R,B_R\backslash\Gamma\}$, where $B_R$ is a box 
of side length $R$. For $ \ell \in \N$ the enlarged sets $\CB^{\delta \ell}$ then have the form
\begin{align}\label{def:setsconcrete}
\CB^{\delta \ell} = \CB_R^{\delta \ell} := \{B_{R+2\delta \ell}\cap\Omega,B_{R+2\delta \ell},B_{R+2\delta \ell}\backslash\Gamma\} \quad
\end{align}
with the concentric boxes $B_{R+2\delta \ell}$ of side length
$R+2\delta \ell$. 

For $\mathbf{v} = (u,v,w)$, we
use the norm from \eqref{eq:def:triplenormVec} 
$$\triplenorm{\mathbf{v}}_{\CB}:= \triplenorm{(u,v,w)}_{h,R}$$ 
in (A2).
For the Bielak-MacCamy coupling, taking $M=2$ and choosing collections $\CB_R := \{B_R\cap\Omega,B_R\}$ would suffice, however, in order to keep the notation short, we 
can use $M=3$ for this coupling as well by setting the third component to zero, i.e., $\mathbf{v} =(u,v,0)$. \bigskip

\noindent
{\bf The operator $Q_H$ and (A2)}

For the operator $Q_H$, we use a combination of localization and Scott-Zhang interpolation, introduced in \cite{ScottZhang}, on a coarse grid. 
Since the double-layer potential is discontinuous across $\Gamma$, we need to 
employ 
a piecewise Scott-Zhang operator.
Let $\mathcal{R}_H$ be a quasi-uniform (infinite) triangulation of $\mathbb{R} ^d$ (into open simplices $R \in {\mathcal R}_H$) 
with mesh width $H$ that conforms to $\Omega$, i.e., every $R \in {\mathcal{R}_H}$ satisfies either 
$R \subset \Omega$ or $R\subset \Omegaext$ and the restrictions 
${\mathcal R}_H|_{\Omega}$ and ${\mathcal R}_H|_{\Omegaext}$ are $\gamma$-shape regular, regular
triangulations of $\Omega$ and $\Omegaext$ of mesh size $H$, respectively. 

With the Scott-Zhang projections  $I_H^{\rm int}$, $I_H^{\rm ext}$
for the grids $\mathcal{R}_H|_{\Omega}$ and $\mathcal{R}_H|_{{\Omega}^c}$, we define the 
operator 
$I_H^{\rm pw}:H^1(\mathbb{R}^d\setminus \Gamma) \rightarrow 
S^{1,1}_{\rm pw}(\mathcal{R}_H):= \{v\,:\, v|_\Omega \in S^{1,1}({\mathcal R}_H|_{\Omega})\ \mbox{ and } \ 
v|_{\Omegaext} \in S^{1,1}({\mathcal R}_H|_{\Omegaext})\}$ in a piecewise fashion by 
\begin{equation}\label{eq:pwInterpolation}
I_H^{\rm pw} v = \left\{
\begin{array}{l}
 I_H^{\rm int} v \quad \text{on } \Omega, \\
 I_H^{\rm ext} v \quad \text{on } \Omegaext.
 \end{array}
 \right.
\end{equation}
We denote the patch of an element $R \in \mathcal{R} _H$ by 
\begin{align*}
 \omega _R^{\Omega} & := \operatorname{interior} 
\left( \bigcup \left\lbrace \overline{R^\prime}\,\colon\,  R^\prime \in \mathcal{R}_H|_{\Omega} \quad \text{s.t.}\quad \overline{R} \cap \overline{R^ \prime} \neq
 \emptyset \right\rbrace\right),  \\
  \omega _R^{\Omegaext} & := \operatorname{interior} 
\left( \bigcup \left\lbrace \overline{R^\prime}  \,\colon\, R^\prime \in \mathcal{R}_H|_{\Omegaext} \quad \text{s.t.} \quad \overline{R} \cap \overline{R^ \prime} \neq
 \emptyset \right\rbrace\right).
\end{align*}
The Scott-Zhang projection reproduces piecewise affine functions and 
has the following local approximation property for piecewise $H^s$ functions: 
\begin{align}
\left\| v - I_H^{\rm pw} v\right\|^2_{H ^ t( R) } 
\le C H ^ {2 (s-t) }
\begin{cases} 
\abs{v}^2_{H^{s}(\omega_R^\Omega)} &\mbox{ if $R \subset \Omega$} \\
\abs{v}^2_{H^{s}(\omega_R^{\Omegaext})} &\mbox{ if $R \subset \Omegaext$} 
\end{cases}
\quad t, s \in \{0,1\}, \quad 0 \leq t \leq  s\le 1,
\end{align}
with a constant $C$  depending  only on the shape-regularity of $\mathcal{R}_H$ and $d$.

 Let $ \eta \in C^ \infty _0 ( B _{R+2\delta} ) $ be a cut-off function satisfying  
 $ \operatorname{supp} \,\eta \subset   B_{R+\delta}, \, \eta \equiv 1$ on $B_R$ and $\norm{\nabla \eta}_{L^\infty(\R^d)}\lesssim \frac{1}{\delta}$.  
We define the operator 
 \begin{align}
  Q_H\mathbf{v} := (I_H^{\rm int}(\eta \mathbf{v}_1), I_H(\eta \mathbf{v}_2), I_H^{\rm pw}(\eta \mathbf{v}_3)),
 \end{align}
where $I_H$ denotes the classical Scott-Zhang operator for the mesh $\mathcal{R}_H$.
We have
\begin{align*}
\triplenorm{\mathbf{v}- Q_H\mathbf{v}}_{\CB}^2 = \norm{\mathbf{v}_1- I_H^{\rm int}(\eta\mathbf{v}_1)}_{h,R,\Omega}^2 +  \norm{\mathbf{v}_2- I_H(\eta\mathbf{v}_2)}_{h,R}^2  +
 \norm{\mathbf{v}_3- I_H^{\rm pw}(\eta\mathbf{v}_3)}_{h,R,\Gamma^c}^2.  
\end{align*}
Each term on the right-hand side can be estimated with the same arguments. We only work out the details for the second component. Assuming $h\leq H$, and using approximation properties and stability of the Scott-Zhang projection, we get 
\begin{align*}
 \norm{\mathbf{v}_2- I_H(\eta \mathbf{v}_2)}_{h,R}^2  &=  \norm{\eta\mathbf{v}_2- I_H(\eta\mathbf{v}_2)}_{h,R}^2  = 
 h^2 \norm{\nabla(\eta\mathbf{v}_2- I_H(\eta\mathbf{v}_2))}_{L^2(B_R)}^2 + \norm{\eta\mathbf{v}_2- I_H(\eta\mathbf{v}_2)}_{L^2(B_R)}^2  \\
&\lesssim (h^2+H^2) \norm{\nabla(\eta\mathbf{v}_2)}_{L^2(\R^d)}  \lesssim H^2\left(\norm{\nabla\mathbf{v}_2}_{L^2(B_{R+2\delta})}^2 + \delta^{-1}\norm{\mathbf{v}_2}_{L^2(B_{R+2\delta})}^2 \right),
\end{align*}
which shows (A2) for the discrete space $V_{H,\CB^\delta} = S^{1,1}(\mathcal{R}_H)|_{B_{R+2\delta}\cap\Omega} \times  S^{1,1}(\mathcal{R}_H)|_{B_{R+2\delta}}  \times  S^{1,1}_{\rm pw}(\mathcal{R}_H)|_{B_{R+2\delta}} $ of dimension $\dim V_{H,\CB^\delta} \leq C \left(\frac{\diam(B_{R+2\delta})}{H}\right)^{Md}$. 
\bigskip

\noindent
{\bf The Caccioppoli inequalities and (A3)}

Theorem~\ref{th:CaccioppoliBMcC}--Theorem~\ref{th:CaccioppoliJN} provide the Caccioppoli type estimates 
 asserted in (A3) with $\delta = \varepsilon R/2$. 
For the Bielak-MacCamy coupling we have $\alpha=1$ and $C_{\rm Set} =8h$, for the symmetric coupling  $\alpha=1$ and $C_{\rm Set} =16h$.
For the Johnson-N\'ed\'elec we have to take $\alpha =2$ and $C_{\rm Set} =16h$. For $\CB_R = \{B_R\cap\Omega,B_R,B_R\backslash\Gamma\}$,
the spaces $\mathcal{H}_h(\CB_R)$ can be characterized by 
{\begin{align*}
	\mathcal{H}_h(\CB_R)  :=& \{(v,\widetilde{V}\phi,\widetilde{K}v) \in H^1(B_R\cap \Omega)\times H^1(B_R) \times H^1(B_R\backslash \Gamma)
	\,\colon\, \exists \widetilde v \in S^{1,1}(\T_h), 
	\widetilde \phi \in S^{0,0}(\mathcal{K}_h):\\ 
	& \quad 
	\widetilde v|_{B_R \cap \Omega} =v|_{B_R \cap \Omega}, \quad \widetilde{V} \widetilde \phi |_{B_R} =\widetilde{V}\phi |_{ B_R}, \quad
	\widetilde{K} \widetilde v|_{B_R\backslash\Gamma} =\widetilde{K} v|_{ B_R\backslash\Gamma},
	\quad a(v,\phi;\psi_h,\zeta_h) = 0 
	\\ 
	&\quad 
	\forall (\psi_h,\zeta_h) \in S^{1,1}(\T_h) \times S^{0,0}(\mathcal{K}_h), \,
	\operatorname*{supp} \psi_h,\zeta_h \subset B_R \}, 
\end{align*}
where the bilinear form $a(\cdot,\cdot)$ is either $a_{\rm sym}$ or $a_{\rm jn}$.
For the Bielak-MacCamy coupling, it suffices to require 
{\begin{align*}
	\mathcal{H}_h(\CB_R)  :=& \{(v,\widetilde{V}\phi,0) \in H^1(B_R\cap \Omega)\times H^1(B_R) \times H^1(B_R\backslash \Gamma)
	\,\colon\, \exists \widetilde v \in S^{1,1}(\T_h), 
	\widetilde \phi \in S^{0,0}(\mathcal{K}_h):\\ 
	& \quad 
	\widetilde v|_{B_R \cap \Omega} =v|_{B_R \cap \Omega}, \quad \widetilde{V} \widetilde \phi |_{B_R} =\widetilde{V}\phi |_{ B_R},
	\quad a_{\rm bmc}(v,\phi;\psi_h,\zeta_h) = 0 
	\\ 
	&\quad 
	\forall (\psi_h,\zeta_h) \in S^{1,1}(\T_h) \times S^{0,0}(\mathcal{K}_h), \,
	\operatorname*{supp} \psi_h,\zeta_h \subset B_R \}. 
\end{align*}
With these definitions, the closedness and nestedness of the spaces $\mathcal{H}_h(\CB_R)$ clearly holds.

\subsubsection{Proof of Theorem~\ref{th:H-Matrix approximation of inverses}}
As a consequence of the above discussions, the abstract framework of the previous sections can be applied and it remains to put everything together.


The following proposition constructs the finite dimensional space required from Lemma~\ref{th:H-Matrix approximation of inverses-abstract},
from which the Galerkin solution can be approximated exponentially well.

\begin{Proposition}[low dimensional approximation for the symmetric coupling] 
\label{pro:Hmatrix}
Let $(\tau , \sigma)$ be a cluster pair with bounding boxes $B_{R_\tau}$ and $B_{R_\sigma}$ that satisfy for given $\eta > 0$
$$\eta  \operatorname*{dist}(B_{R_\tau},B_{R_\sigma}) \ge \diam (B_{R_\tau}).$$ 
%
Then, for each $L \in \mathbb{N}$, there exists a space 
$\widehat{\mathbf{W}}_L \subset S^{1,1}(\T_h) \times S^{0,0}(\mathcal{K}_h)$ with 
dimension $\dim \widehat{\mathbf{W}}_L \le C_{\rm low} L ^{3d+1}$ such that 
for arbitrary right-hand sides $f \in L^2 (\Omega)$, $v_0 \in L^2(\Gamma)$, and $w_0 \in {L^2(\Gamma)}$
with $\big(\operatorname*{supp} f \cup \operatorname*{supp}v_0 \cup 
\operatorname*{supp}w_0\bigr) \subset B_{R_\sigma},$ 
the corresponding Galerkin solution $(u_h,\varphi_h)$ of \eqref{eq:symmcoupling} satisfies 
\begin{align*}
\min\limits_{\left( \widetilde{u}, \widetilde{\varphi}\right)  \in \widehat{\mathbf{W}}_L} 
\left(\left\| u_h- \widetilde{u}\right\|_{L^2 (B_{R_\tau} \cap \Omega)}+
\left\| \varphi_h- \widetilde{\varphi}\right\|_{L^{2} (B_{R_\tau}\cap\Gamma)}\right)  
&\le C_{\rm{box}}h^{-2}2^{-L}\left(\left\|f\right\|_ {L^2 (\Omega)} 
+\left\|v_0 \right\|_ {L^{2} (\Gamma)}  +\left\|w_0 \right\|_ {L^{2} (\Gamma)} \right).
\end{align*}
The constants $C_{\rm low}$, $C_{\rm box}$ depend only on 
	$\Omega$, $d$, $\eta$, and the $\gamma$-shape regularity of the quasi-uniform triangulation 
	$\mathcal{T}_h$ and $\mathcal{K}_h$.
\end{Proposition}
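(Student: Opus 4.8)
The plan is to apply the abstract multi‑step approximation (Lemma~\ref{multi step approximation}) to a slight enlargement of the collection $\CB_{R_\tau}$, after checking that the Galerkin solution, viewed through the layer potentials, lies in the space $\mathcal{H}_h$ on which the Caccioppoli‑driven iteration operates, and then to convert the resulting $\triplenorm{\cdot}$‑bound into the stated $L^2$‑bound for $(u_h,\varphi_h)$ themselves. Writing $q:=\operatorname{dist}(B_{R_\tau},B_{R_\sigma})$, so that admissibility reads $\diam(B_{R_\tau})\le\eta q$, I would split into two regimes. If $q\ge c_0\,hL$ for a suitable constant $c_0=c_0(d)$, I would choose the per‑step enlargement $\delta\simeq q/L$; this makes $\delta\ge 2C_{\rm Set}(N)=32h$ (the hypothesis of Lemma~\ref{multi step approximation}), while the $L$‑fold enlarged box, having grown from $B_{R_\tau}$ by $\mathcal{O}(\delta L)=\mathcal{O}(q)$, stays disjoint from $\operatorname{supp}f\cup\operatorname{supp}v_0\cup\operatorname{supp}w_0\subset B_{R_\sigma}$. (I would also collar $B_{R_\tau}$ by a fixed multiple of $h$, harmless since $h\lesssim q/L$ here, so that the inverse estimate used below returns a bound on $B_{R_\tau}$ itself; call the resulting collection $\CB$.) Since the data lie outside $\CB^{\delta L}$, the Galerkin orthogonality of \eqref{eq:symmcoupling} forces $a_{\rm sym}(u_h,\varphi_h;\psi_h,\zeta_h)=0$ for all discrete $(\psi_h,\zeta_h)$ supported there, hence $\mathbf{v}:=(u_h,\widetilde V\varphi_h,\widetilde K u_h)\in\mathcal{H}_h(\CB^{\delta L})$ by the description of $\mathcal{H}_h$ above. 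Lemma~\ref{multi step approximation} then produces a space $\mathbf{W}_L\subseteq\mathcal{H}_h(\CB^{\delta L})$ with $\dim\mathbf{W}_L\le C_{\rm dim}(L+\diam(\CB)/\delta)^{\alpha Md+1}$, where $M=3$ and $\alpha=1$; as $\diam(\CB)/\delta\lesssim\diam(B_{R_\tau})/\delta\lesssim\eta L$ by admissibility and the choice of $\delta$, this is $\le C_{\rm low}L^{3d+1}$, together with an element $\mathbf{w}\in\mathbf{W}_L$ satisfying $\triplenorm{\mathbf{v}-\mathbf{w}}_{\CB}\le 2^{-L}\triplenorm{\mathbf{v}}_{\CB^{\delta L}}$.

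It then remains to quantify both sides. For the right‑hand side, the discrete symmetric coupling is stable uniformly in $h$ (being equivalent to an $H^1(\Omega)\times H^{-1/2}(\Gamma)$‑elliptic problem, cf.\ \cite{AFFKMP13}), so $\norm{u_h}_{H^1(\Omega)}+\norm{\varphi_h}_{H^{-1/2}(\Gamma)}$ is controlled by the discrete dual norm of $g_{\rm sym}$; the $f$‑ and $v_0$‑terms pair with $H^1(\Omega)$‑functions through the trace theorem without loss, but the $w_0$‑term pairs with $\zeta_h\in S^{0,0}(\mathcal{K}_h)$ and, via the inverse estimate $\norm{\zeta_h}_{L^2(\Gamma)}\lesssim h^{-1/2}\norm{\zeta_h}_{H^{-1/2}(\Gamma)}$, costs a factor $h^{-1/2}$, so $\norm{u_h}_{H^1(\Omega)}+\norm{\varphi_h}_{H^{-1/2}(\Gamma)}\lesssim h^{-1/2}(\norm{f}_{L^2(\Omega)}+\norm{v_0}_{L^2(\Gamma)}+\norm{w_0}_{L^2(\Gamma)})$; the mapping properties \eqref{eq:mapping-Ktilde} of $\widetilde V,\widetilde K$ (the boxes have diameter $\lesssim\diam(\Omega)$) then bound $\triplenorm{\mathbf{v}}_{\CB^{\delta L}}$ by the same quantity. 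For the left‑hand side, the first component of $\mathbf{w}$ is the desired $\widetilde u\in S^{1,1}(\T_h)$, with $\norm{u_h-\widetilde u}_{L^2(B_{R_\tau}\cap\Omega)}\le\triplenorm{\mathbf{v}-\mathbf{w}}_{\CB}$; its second component equals $\widetilde V\widetilde\varphi$ for some $\widetilde\varphi\in S^{0,0}(\mathcal{K}_h)$, so $\varphi_h-\widetilde\varphi$ is discrete and Lemma~\ref{lem:investV} yields $\norm{\varphi_h-\widetilde\varphi}_{L^2(B_{R_\tau}\cap\Gamma)}\lesssim h^{-1/2}\norm{\nabla\widetilde V(\varphi_h-\widetilde\varphi)}_{L^2(B_{R_\tau+8h})}$, where $\widetilde V(\varphi_h-\widetilde\varphi)$ agrees with the middle component of $\mathbf{v}-\mathbf{w}$ on that box. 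Since $\mathbf{v}-\mathbf{w}$ again lies in $\mathcal{H}_h$, the Caccioppoli estimate (A3) at scale $16h$ (with $\alpha=1$) bounds this gradient by $h^{-1}\triplenorm{\mathbf{v}-\mathbf{w}}_{\CB}$. Multiplying the three factors $h^{-1/2}$ (discrete right‑hand side), $h^{-1}$ (Caccioppoli at the mesh scale) and $h^{-1/2}$ (recovery of $\varphi_h$) by $2^{-L}$ gives the assertion in this regime, with $\widehat{\mathbf{W}}_L$ the span of $\dim\mathbf{W}_L$ discrete preimages of a basis of $\mathbf{W}_L$ under the restriction map, so that $\dim\widehat{\mathbf{W}}_L\le C_{\rm low}L^{3d+1}$.

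In the complementary regime $q<c_0\,hL$ one has $\diam(B_{R_\tau})\le\eta q\lesssim\eta hL$, so $B_{R_\tau}$ meets only $\mathcal{O}((\eta L)^d)$ elements of $\T_h$ and of $\mathcal{K}_h$; taking $\widehat{\mathbf{W}}_L$ to be the span of the hat functions $\xi_i$ and characteristic functions $\chi_i$ whose support meets $B_{R_\tau}$, the Galerkin solution is reproduced exactly on $B_{R_\tau}$, the infimum on the left‑hand side vanishes, and $\dim\widehat{\mathbf{W}}_L\lesssim_\eta L^d\le L^{3d+1}$. The main obstacle I foresee is the precise tracking of the $h$‑powers so as to land exactly on $h^{-2}$ — in particular realizing that the merely $L^2(\Gamma)$‑regular datum $w_0$ forces an inverse estimate in the stability bound — together with the geometric bookkeeping that keeps the $L$‑fold enlarged boxes simultaneously large enough for the multi‑step lemma ($\delta\ge 32h$) and disjoint from the data support, the case split being precisely the device that resolves this conflict.
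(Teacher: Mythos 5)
Your proposal is correct and follows essentially the same route as the paper: the choice $\delta\sim\operatorname{dist}/L$ with the case split against the mesh scale, the multi-step lemma with $M=3$, $\alpha=1$, the $h^{-1/2}$ loss from pairing the $L^2(\Gamma)$-datum $w_0$ with $S^{0,0}(\mathcal{K}_h)$ via an inverse estimate, and the $h^{-3/2}$ recovery of $\varphi_h$ from $\widetilde V\varphi_h$ through Lemma~\ref{lem:investV}. The only cosmetic difference is that your mesh-scale Caccioppoli step is not needed: the factor $h^{-1}$ comes directly from the definition \eqref{eq:def:triplenorm}, since $h\left\|\nabla v\right\|_{L^2}\le\triplenorm{v}_{h,R}$.
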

\begin{proof} 
%
%
For given $L \in \N$, we choose $\delta:= \frac{R_{\tau}}{2\eta L}$ . Then, we have
\begin{align*}
\operatorname*{dist}(B_{R_\tau+2\delta L},B_ {R_\sigma}) \ge 
\operatorname*{dist} (B_ {R_\tau},B_ {R_\sigma})-  L \delta \sqrt{d} 
\ge \sqrt{d} R_\tau \Big(\frac{1}{\eta}-\frac{1}{2\eta}\Big) >0.
\end{align*} 
With $\CB_{R_\tau} = \{B_{R_\tau}\cap\Omega,B_{R_\tau},B_{R_\tau}\backslash\Gamma\}$ and $\CB^{\delta L}_{R_\tau} = \{B_{R_\tau+2\delta L}\cap\Omega,B_{R_\tau+2\delta L},B_{R_\tau+2\delta L}\backslash\Gamma\}$ from \eqref{def:setsconcrete},
the assumption on the support of the data therefore implies the local orthogonality imposed in the 
space $\mathcal{H}_h(\CB^{\delta L}_{R_\tau})$. 
In order to define the space $\widehat{\mathbf{W}}_L$, we distinguish two cases.

{\bf Case $\delta > 2C_{\rm Set}$:} Then, Lemma~\ref{multi step approximation}  applied with the sets $\CB_{R_\tau}^\delta$ and $\CB_{R_\tau}^{\delta L}$
provides a space $\mathbf{W}_L$ of dimension 
\begin{align*}
\dim \mathbf{W} _L  \leq C _ {\rm dim}\Big(L-1+ \frac{\diam (\CB_{R_\tau}^\delta)}{\delta}\Big)^{3d+1} \lesssim 
\Big(L+ \frac{\sqrt{d}R_\tau 2\eta L}{R_\tau}\Big)^{3d+1} \lesssim L^{3d+1}
\end{align*}
with the approximation properties for $\mathbf{v} = (u_h,\widetilde{V}\varphi_h,\widetilde{K}u_h)$
\begin{align}\label{eq:lowdimtemp1}
 \inf_{\mathbf{w}\in \mathbf{W}_L }\triplenorm{\Bv - \mathbf{w}}_{ \CB_{R_\tau}^\delta}   \le 2 ^ {-(L-1) } \triplenorm{\Bv}_{\CB_{R_\tau}^{\delta L}}.
  \end{align} 
Therefore, it remains to estimate the norm $\triplenorm{\cdot}_{\CB}$ from above and below.

With $h\lesssim 1$, the mapping properties of $\widetilde{V}$ and $\widetilde{K}$
from \eqref{eq:mapping-Ktilde}, and the trace inequality 
we can estimate
\begin{align}\label{eq:proofHmatrixpro3}
\nonumber\triplenorm{(u_h,\widetilde{V}\varphi_h,\widetilde{K}u_h)}_{\CB^{\delta L}_{R_\tau}}  
&\lesssim 
\left\|u_h \right\|_ {H^1 (\Omega)}+
\left\|\widetilde{V}\varphi _h \right\|_{H ^1(B_{(1+1/(2\eta))R_\tau})} + 
\left\|\widetilde{K}u_h \right\|_{H ^1(B_{(1+1/(2\eta))R_\tau}\backslash\Gamma)}
\\& \lesssim \left\|u_h \right\|_ {H^1 (\Omega)}+
\left\| \varphi _h\right\|_{H^{-1/2}(\Gamma)}. 
\end{align}
The stabilized form 
$\widetilde{a}_{\rm sym}(u,\varphi;\psi,\zeta) := a_{\rm sym}(u,\varphi;\psi,\zeta) 
+ \skp{1,V\varphi+(\frac{1}{2}-K)u}_{L^2(\Gamma)}\skp{1,V\zeta+(\frac{1}{2}-K)\psi}_{L^2(\Gamma)}$
is elliptic, cf.~\cite{AFFKMP13}. Moreover, \cite[Thm.~{18}]{AFFKMP13} prove that the Galerkin solution also 
solves $\widetilde{a}_{\rm sym}(u_h,\varphi_h;\psi,\zeta) = g_{\rm sym}(\psi,\zeta) + 
\skp{1,w_0}_{L^2(\Gamma)}\skp{1,(\frac{1}{2}-K)\psi+V\zeta}_{L^2(\Gamma)}$. Therefore, we have 
\begin{align}\label{eq:proofHmatrixpro3b}
\nonumber 
\left\| \varphi _h\right\|^2_{H^{-1/2}(\Gamma)}+\left\|u_h \right\|^2_{H^1(\Omega)}  
\lesssim  \widetilde{a}_{\rm sym}(u_h, \varphi _h;u_h, \varphi _h) 
&= \skp{f,u_h}_{L^2(\Omega)} + 
\skp{v_0,u_h}_{L^2(\Gamma)} 
+\skp{w_0 , \varphi _h}_{L^2(\Gamma)} \\ 
&\quad+\skp{1,(1/2-K)u_h+V\varphi_h}_{L^2(\Gamma)}\skp{1,w_0}_{L^2(\Gamma)}. 
\end{align}
The stabilization term can be estimated with the mapping properties of $V$ and $K$ from \eqref{eq:mapping-K} and the trace inequality by 
\begin{align*}
\abs{\skp{1,(1/2-K)u_h+V\varphi_h}_{L^2(\Gamma)}\skp{1,w_0}_{L^2(\Gamma)}} 
&\lesssim \left(\norm{(1/2-K)u_h}_{L^2(\Gamma)}+\norm{V\varphi_h}_{L^2(\Gamma)}\right) 
\norm{w_0}_{L^2(\Gamma)}\\
&\lesssim \norm{w_0}_{L^2(\Gamma)}\left(\norm{u_h}_{H^1(\Omega)}
+\norm{\varphi_h}_{H^{-1/2}(\Gamma)}\right).
\end{align*}
Inserting this in \eqref{eq:proofHmatrixpro3b}, using
the trace inequality and an inverse estimate we further estimate
\begin{align*}
\nonumber 
\left\| \varphi _h\right\|^2_{H^{-1/2}(\Gamma)}+\left\|u_h \right\|^2_{H^1(\Omega)} &\lesssim   
\left(\left\|f\right\|_ {L^2 (\Omega)}
+\left\|v_0 \right\|_ {H^{-1/2} (\Gamma)}\right) \norm{u_h}_{H^1(\Omega)}\\
&\nonumber \quad +\left\|w_0 \right\|_ {L^{2} (\Gamma)} 
\left(\norm{\varphi_h}_{L^{2}(\Gamma)}+\norm{u_h}_{H^1(\Omega)}
+\norm{\varphi_h}_{H^{-1/2}(\Gamma)}\right) 
 \\&
 \nonumber  \le
\left(\left\|f\right\|_ {L^2 (\Omega)}
+\left\|v_0 \right\|_ {H^{-1/2} (\Gamma)}\right) \norm{u_h}_{H^1(\Omega)}  \\
 &
\quad 
+h^{-1/2}\left\|w_0 \right\|_{L^2(\Gamma)}\left( \norm{u_h}_{H^1(\Omega)}
+\norm{\varphi_h}_{H^{-1/2}(\Gamma)}\right). 
 \end{align*}
With Young's inequality and inserting this in \eqref{eq:proofHmatrixpro3}, we obtain the upper bound
\begin{align}\label{eq:proofHmatrixpro4}
\triplenorm{(u_h,\widetilde{V}\varphi_h,\widetilde{K}u_h)}_{\CB_{R_\tau}^{\delta L}}  
&\lesssim \left( \left\|f\right\|_ {L^2 (\Omega)}  
+\left\|v_0 \right\|_ {L^2(\Gamma)} 
+h ^{-1/2}\left\|w_0 \right\|_ {L^2(\Gamma)}\right).
\end{align}
The jump conditions of the single-layer potential and Lemma~\ref{lem:investV} provide 
for arbitrary $\widetilde{\varphi} \in S^{0,0}(\mathcal{K}_h)$
\begin{align}
\left\|\varphi_h -\widetilde{\varphi} \right\| _{L^2 (B_{R_\tau} \cap \Gamma) } =
\left\|[\gamma_1 \widetilde{V}\varphi _h ] 
-[\gamma_1 \widetilde{V} \widetilde{\varphi}]  \right\| _{L^2 (B_{R_\tau} \cap \Gamma) }  
&\lesssim  h^{-1/2}\left\| \nabla ( \widetilde{V}\varphi_h-\widetilde{V}\widetilde{\varphi}) \right\|_{L^2(B_{R+2\delta})} 
\nonumber \\& 
 \lesssim h^{-3/2}\triplenorm{ \widetilde{V}\varphi_h- \widetilde{V}\widetilde{\varphi} }_{h,R+2\delta}.
\end{align}
Finally, we define $\widehat{\mathbf{W}}_L := \{(\widetilde{u},[\gamma_1 \widetilde{v}])\,\colon\,
(\widetilde{u},\widetilde{v},\widetilde{w})\in \mathbf{W}_L\}$. 
Then, the dimension of $\widehat{\mathbf{W}}_L$ is bounded by $\widehat{\mathbf{W}}_L \le C L^{3d+1}$,
and the error estimate follows from \eqref{eq:lowdimtemp1} since 
\begin{align*}
\nonumber \inf\limits_{(\widetilde{u},\widetilde{\varphi}) \in \widehat{\mathbf{W}}_L}
\left( \left\| u_h- \widetilde{u}\right\| _{L^2 (B_{R_\tau}\cap\Omega)} 
+\left\|\varphi_h -\widetilde{\varphi} \right\| _{L^2 (B_{R_\tau} \cap \Gamma) } \right) 
&\lesssim h^{-3/2}\inf\limits_{\mathbf{w} \in \mathbf{W}_L}
\triplenorm{(u_h,\widetilde{V}\varphi_h,\widetilde{K}u_h)-\mathbf{w}}_{\CB_{R_\tau}^{\delta}}
 \\
&
\nonumber \lesssim h^{-3/2} 2 ^{-L} 
\triplenorm{(u_h,\widetilde{V}\varphi_h,\widetilde{K}u_h)}_{\CB_{R_\tau}^{\delta L}}. 
\end{align*}
Applying estimate \eqref{eq:proofHmatrixpro4} finishes the proof for the case $\delta \geq 2 C_{\rm set}.$

{\bf Case $\delta \leq 2 C_{\rm set} = 32h$:} Here, we use the space $\widehat{\mathbf{W}}_L:= S^{1,1}(\mathcal{T}_h)|_{B_{R_\tau}} \times S^{0,0}(\mathcal{K}_h)|_{B_{R_\tau}} $. Since 
$(u_h,\varphi_h)|_{B_{R_\tau}} \in \widehat{\mathbf{W}}_L$ the error estimate holds trivially. For the dimension of $\widehat{\mathbf{W}}_L$, we obtain
\begin{align*}
\operatorname{dim} \widehat{\mathbf{W}}_L \leq C \left(\frac{\diam(B_{R_\tau})}{h}\right)^{2d} \leq C
\left(\frac{32\sqrt{d} R_\tau }{\delta}\right)^{2d} \leq C 
\left(2C_{\rm set}\sqrt{d}2\eta L\right)^{2d} \lesssim L^{2d},
\end{align*}
which finishes the proof.
\end{proof}

\begin{Proposition}[low dimensional approximation for the Bielak-MacCamy coupling] 
\label{pro:HmatrixBMC}
Let $(\tau , \sigma)$ be a cluster pair with bounding boxes $B_{R_\tau}$ and $B_{R_\sigma}$ that satisfy for given $\eta > 0$
$$\eta  \operatorname*{dist}(B_{R_\tau},B_{R_\sigma}) \ge \diam (B_{R_\tau}).$$ 
%
Then, for each $L \in \mathbb{N}$, there exists a space 
$\widehat{\mathbf{W}}_L \subset S^{1,1}(\T_h) \times S^{0,0}(\mathcal{K}_h)$ with 
dimension $\dim \widehat{\mathbf{W}}_L \le C_{\rm low} L ^{2d+1}$ such that 
for arbitrary right-hand sides $f \in L^2 (\Omega)$, $\varphi_0 \in L^2(\Gamma)$, and $u_0 \in {L^2(\Gamma)}$
with $\big(\operatorname*{supp} f \cup \operatorname*{supp}\varphi_0 \cup 
\operatorname*{supp}u_0\bigr) \subset B_{R_\sigma},$ 
the corresponding Galerkin solution $(u_h,\varphi_h)$ of \eqref{eq:BMcC} satisfies 
\begin{align*}
\min\limits_{\left( \widetilde{u}, \widetilde{\varphi}\right)  \in \widehat{\mathbf{W}}_L} 
\left(\left\| u_h- \widetilde{u}\right\|_{L^2 (B_{R_\tau} \cap \Omega)}+
\left\| \varphi_h- \widetilde{\varphi}\right\|_{L^{2} (B_{R_\tau}\cap\Gamma)}\right)  
&\le C_{\rm{box}}h^{-2}2^{-L}\left(\left\|f\right\|_ {L^2 (\Omega)} 
+\left\|\varphi_0 \right\|_ {L^{2} (\Gamma)}  +\left\|u_0 \right\|_ {L^{2} (\Gamma)} \right).
\end{align*}
The constants $C_{\rm low}$, $C_{\rm box}$ depend only on 
	$\Omega$, $d$, $\eta$, and the $\gamma$-shape regularity of the quasi-uniform triangulation 
	$\mathcal{T}_h$ and $\mathcal{K}_h$.
\end{Proposition}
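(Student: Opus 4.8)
The plan is to mirror the proof of Proposition~\ref{pro:Hmatrix} almost verbatim, using the Caccioppoli-type estimate for the Bielak--MacCamy coupling (Theorem~\ref{th:CaccioppoliBMcC}) instead of the one for the symmetric coupling, and adapting the bookkeeping to reflect that $\alpha=1$ for Bielak--MacCamy (hence the exponent $2d+1$ rather than $3d+1$ in the dimension bound). First I would fix $L\in\N$ and set $\delta := R_\tau/(2\eta L)$ exactly as before, so that the admissibility hypothesis $\eta\,\mathrm{dist}(B_{R_\tau},B_{R_\sigma})\ge \diam(B_{R_\tau})$ gives $\mathrm{dist}(B_{R_\tau+2\delta L},B_{R_\sigma})>0$, whence the support hypothesis on the data forces the local orthogonality built into $\mathcal{H}_h(\CB^{\delta L}_{R_\tau})$ for the Bielak--MacCamy bilinear form. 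Here one uses the reduced space $\mathcal{H}_h(\CB_R) = \{(v,\widetilde V\phi,0):\dots\}$ defined in Section~\ref{sec:proofThmHmatrix}, so the third (double-layer) component is absent.

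Next I would split into the two cases $\delta > 2C_{\rm Set}$ and $\delta\le 2C_{\rm Set}=16h$ (note $C_{\rm Set}=8h$ for Bielak--MacCamy). In the large-$\delta$ case, I would invoke Lemma~\ref{multi step approximation} with $\alpha=1$, $Md = 2d$, applied to the concentric collections $\CB^\delta_{R_\tau}\subset\CB^{\delta L}_{R_\tau}$, producing a space $\mathbf{W}_L$ with $\dim\mathbf{W}_L\lesssim (L+\diam(\CB^\delta_{R_\tau})/\delta)^{2d+1}\lesssim L^{2d+1}$ (using $\diam(\CB^\delta_{R_\tau})/\delta\lesssim R_\tau/\delta = 2\eta L$) and with $\inf_{\mathbf{w}\in\mathbf{W}_L}\triplenorm{(u_h,\widetilde V\varphi_h,0)-\mathbf{w}}_{\CB^\delta_{R_\tau}}\le 2^{-(L-1)}\triplenorm{(u_h,\widetilde V\varphi_h,0)}_{\CB^{\delta L}_{R_\tau}}$. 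I would then bound the triple norm from above by $\|u_h\|_{H^1(\Omega)}+\|\varphi_h\|_{H^{-1/2}(\Gamma)}$ using the mapping properties \eqref{eq:mapping-Ktilde} of $\widetilde V$ and the trace inequality, and convert back from the potential to the density on $B_{R_\tau}$ using the jump relation $[\gamma_1\widetilde V\psi]=-\psi$ together with Lemma~\ref{lem:investV}, picking up the factor $h^{-3/2}$ exactly as in the symmetric case. Finally I would define $\widehat{\mathbf{W}}_L:=\{(\widetilde u,-[\gamma_1\widetilde v]):(\widetilde u,\widetilde v,0)\in\mathbf{W}_L\}$, so that $\dim\widehat{\mathbf{W}}_L\le CL^{2d+1}$ and the asserted error estimate follows. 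In the small-$\delta$ case I would take $\widehat{\mathbf{W}}_L=S^{1,1}(\mathcal{T}_h)|_{B_{R_\tau}}\times S^{0,0}(\mathcal{K}_h)|_{B_{R_\tau}}$, in which $(u_h,\varphi_h)$ lies exactly, and bound its dimension by $C(\diam(B_{R_\tau})/h)^{2d}\lesssim L^{2d}$.

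The one genuinely new ingredient, compared with copying the symmetric-coupling argument, is the \emph{a priori bound} $\|u_h\|_{H^1(\Omega)}+\|\varphi_h\|_{H^{-1/2}(\Gamma)}\lesssim \|f\|_{L^2(\Omega)}+\|\varphi_0\|_{L^2(\Gamma)}+h^{-1/2}\|u_0\|_{L^2(\Gamma)}$ for the Bielak--MacCamy Galerkin solution. This is the step I expect to be the main (if modest) obstacle: one must use that, under $C_{\rm ell}>1/4$, a stabilized form $\widetilde a_{\rm bmc}$ (as in \cite{AFFKMP13}) is elliptic and that the Galerkin solution also solves the stabilized system, then test with $(u_h,\varphi_h)$, estimate the coupling and stabilization terms via the mapping properties \eqref{eq:mapping-K} of $V$, $K$, $K'$ and a trace inequality, and finally use an inverse estimate to pass from the $H^{-1/2}(\Gamma)$-duality against $u_0$ to the $L^2(\Gamma)$-norm at the price of $h^{-1/2}$. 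Given the asymmetric structure of $\mathbf{A}_{\rm bmc}$ (the $-\skp{u,\zeta}_{L^2(\Gamma)}$ term rather than a symmetric $\skp{(1/2-K)u,\zeta}$), a little care is needed that the stabilization indeed yields coercivity, but this is exactly the content of \cite[Thm.~9]{AFFKMP13}, which we may cite. With this bound in hand, Young's inequality and the substitution give the upper bound $\triplenorm{(u_h,\widetilde V\varphi_h,0)}_{\CB^{\delta L}_{R_\tau}}\lesssim \|f\|_{L^2(\Omega)}+\|\varphi_0\|_{L^2(\Gamma)}+h^{-1/2}\|u_0\|_{L^2(\Gamma)}$, and combining with the $h^{-3/2}$ from the potential-to-density conversion yields the stated $h^{-2}$ prefactor.
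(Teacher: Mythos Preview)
Your proposal is correct and follows essentially the same approach as the paper, which simply states that the proof is identical to that of Proposition~\ref{pro:Hmatrix} with the observation that one may take $M=2$ in the abstract setting (no double-layer component needed), yielding the improved exponent $2d+1$. You have fleshed out the details the paper omits---in particular the a~priori bound via the stabilized Bielak--MacCamy form from \cite{AFFKMP13}---and these are handled exactly as in the symmetric case.
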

\begin{proof}
The proof is essentially identical to the proof of Proposition~\ref{pro:Hmatrix}. We stress that the bound of the dimension $\dim \widehat{\mathbf{W}}_L \le C_{\rm low} L ^{2d+1}$ is 
better, since no approximation for the double-layer potential is needed, i.e., we can choose $M=2$ in the abstract setting. 
\end{proof}

\begin{Proposition}[low dimensional approximation for the Johnson-N\'ed\'elec coupling] 
\label{pro:HmatrixJN}
Let $(\tau , \sigma)$ be a cluster pair with bounding boxes $B_{R_\tau}$ and $B_{R_\sigma}$ that satisfy for given $\eta > 0$
$$\eta  \operatorname*{dist}(B_{R_\tau},B_{R_\sigma}) \ge \diam (B_{R_\tau}).$$ 
%
Then, for each $L \in \mathbb{N}$, there exists a space 
$\widehat{\mathbf{W}}_L \subset S^{1,1}(\T_h) \times S^{0,0}(\mathcal{K}_h)$ with 
dimension $\dim \widehat{\mathbf{W}}_L \le C_{\rm low} L ^{6d+1}$, such that 
for arbitrary right-hand sides $f \in L^2 (\Omega)$, $\varphi_0 \in L^2(\Gamma)$, and $w_0 \in {L^2(\Gamma)}$
with $\big(\operatorname*{supp} f \cup \operatorname*{supp}\varphi_0 \cup 
\operatorname*{supp}w_0\bigr) \subset B_{R_\sigma},$ 
the corresponding Galerkin solution $(u_h,\varphi_h)$ of \eqref{eq:JN} satisfies 
\begin{align*}
\min\limits_{\left( \widetilde{u}, \widetilde{\varphi}\right)  \in \widehat{\mathbf{W}}_L} 
\left(\left\| u_h- \widetilde{u}\right\|_{L^2 (B_{R_\tau} \cap \Omega)}+
\left\| \varphi_h- \widetilde{\varphi}\right\|_{L^{2} (B_{R_\tau}\cap\Gamma)}\right)  
&\le C_{\rm{box}}h^{-2}2^{-L}\left(\left\|f\right\|_ {L^2 (\Omega)} 
+\left\|\varphi_0 \right\|_ {L^{2} (\Gamma)}  +\left\|w_0 \right\|_ {L^{2} (\Gamma)} \right).
\end{align*}
The constants $C_{\rm low}$, $C_{\rm box}$ depend only on 
	$\Omega$, $d$, $\eta$, and the $\gamma$-shape regularity of the quasi-uniform triangulation 
	$\mathcal{T}_h$ and $\mathcal{K}_h$.
\end{Proposition}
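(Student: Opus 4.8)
The plan is to mimic the proof of Proposition~\ref{pro:Hmatrix} essentially line by line, now invoking the Caccioppoli estimate of Theorem~\ref{th:CaccioppoliJN} in place of the one for the symmetric coupling. Theorem~\ref{th:CaccioppoliJN} supplies assumption (A3) with $M=3$, $C_{\rm Set}=16h$ and---this is the only structural difference---$\alpha=2$ rather than $\alpha=1$, which is precisely what turns the dimension bound in Lemma~\ref{multi step approximation} into $(L+\diam(\CB)/\delta)^{\alpha Md+1}=(L+\diam(\CB)/\delta)^{6d+1}$ and hence produces the exponent $6d+1$ in the claim. For given $L\in\N$ I would set $\delta:=R_\tau/(2\eta L)$, verify as in Proposition~\ref{pro:Hmatrix} that the admissibility hypothesis forces $\operatorname*{dist}(B_{R_\tau+2\delta L},B_{R_\sigma})>0$, and note that the support assumption on $f,\varphi_0,w_0$ then encodes the local orthogonality built into $\mathcal{H}_h(\CB_{R_\tau}^{\delta L})$.

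In the generic regime $\delta>2C_{\rm Set}$, Lemma~\ref{multi step approximation} applied to $\CB_{R_\tau}^\delta\subset\CB_{R_\tau}^{\delta L}$ yields a space $\mathbf{W}_L$ of dimension $\lesssim(L+2\sqrt d\eta L)^{6d+1}\lesssim L^{6d+1}$ with $\inf_{\mathbf{w}\in\mathbf{W}_L}\triplenorm{\mathbf{v}-\mathbf{w}}_{\CB_{R_\tau}^\delta}\le 2^{-(L-1)}\triplenorm{\mathbf{v}}_{\CB_{R_\tau}^{\delta L}}$ for $\mathbf{v}=(u_h,\widetilde V\varphi_h,\widetilde Ku_h)$. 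I would then bound $\triplenorm{\mathbf{v}}_{\CB_{R_\tau}^{\delta L}}$ from above by $\|u_h\|_{H^1(\Omega)}+\|\varphi_h\|_{H^{-1/2}(\Gamma)}$ via the mapping properties \eqref{eq:mapping-Ktilde} of $\widetilde V,\widetilde K$ and a trace inequality, set $\widehat{\mathbf{W}}_L:=\{(\widetilde u,[\gamma_1\widetilde v])\,:\,(\widetilde u,\widetilde v,\widetilde w)\in\mathbf{W}_L\}$, and pass from the potential norm back to an $L^2(\Gamma)$-estimate of $\varphi_h-\widetilde\varphi$ using $[\gamma_1\widetilde V\varphi_h]=-\varphi_h$, Lemma~\ref{lem:investV} and an inverse estimate, exactly as in Proposition~\ref{pro:Hmatrix}; this last step contributes the factor $h^{-3/2}$, which together with the $h^{-1/2}$ from the stability bound below gives the $h^{-2}$ in the stated estimate. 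The degenerate regime $\delta\le 2C_{\rm Set}=32h$ is handled verbatim as in Proposition~\ref{pro:Hmatrix}: one takes $\widehat{\mathbf{W}}_L:=S^{1,1}(\T_h)|_{B_{R_\tau}}\times S^{0,0}(\mathcal{K}_h)|_{B_{R_\tau}}$, so that $(u_h,\varphi_h)|_{B_{R_\tau}}\in\widehat{\mathbf{W}}_L$ makes the error vanish while $\dim\widehat{\mathbf{W}}_L\lesssim(\diam(B_{R_\tau})/h)^{2d}\lesssim(\eta L)^{2d}\lesssim L^{6d+1}$.

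The only genuinely coupling-specific ingredient, and the step I expect to be the main obstacle, is the a priori stability estimate $\|u_h\|_{H^1(\Omega)}+\|\varphi_h\|_{H^{-1/2}(\Gamma)}\lesssim\|f\|_{L^2(\Omega)}+\|\varphi_0\|_{L^2(\Gamma)}+h^{-1/2}\|w_0\|_{L^2(\Gamma)}$ for the Galerkin solution of \eqref{eq:JN}. Since $a_{\rm jn}$ is not itself elliptic, under the standing assumption $C_{\rm ell}>1/4$ one has to work with the implicitly stabilized form $\widetilde a_{\rm jn}(u,\varphi;\psi,\zeta):=a_{\rm jn}(u,\varphi;\psi,\zeta)+\skp{1,V\varphi+(1/2-K)u}_{L^2(\Gamma)}\skp{1,V\zeta+(1/2-K)\psi}_{L^2(\Gamma)}$ of \cite{AFFKMP13}, check that $(u_h,\varphi_h)$ is also a Galerkin solution of the stabilized problem, test with $(u_h,\varphi_h)$, and control the stabilization contribution by the mapping properties \eqref{eq:mapping-K} of $V,K$, the trace inequality, and an inverse estimate to trade the $H^{-1/2}(\Gamma)$-norm of $w_0$ for its $L^2(\Gamma)$-norm, in complete analogy with \eqref{eq:proofHmatrixpro3b} and the lines that follow it. Once this bound is secured, the remaining arguments are a mechanical repetition of the symmetric case, and the $\mathcal{H}$-matrix estimate of Theorem~\ref{th:H-Matrix approximation of inverses} for the Johnson-N\'ed\'elec coupling then follows by combining this proposition with Lemma~\ref{th:H-Matrix approximation of inverses-abstract} and the refined block estimate \eqref{eq:refinedblockestimate}.
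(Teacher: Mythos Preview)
Your proposal is correct and follows exactly the approach the paper takes: the paper's own proof of this proposition consists of a single sentence stating that the argument is identical to Proposition~\ref{pro:Hmatrix} with the abstract parameters $M=3$ and $\alpha=2$, which accounts for the exponent $6d+1$ via Lemma~\ref{multi step approximation}. You have spelled out in detail precisely what the paper leaves implicit, including the coupling-specific stability estimate via the stabilized form $\widetilde a_{\rm jn}$ from \cite{AFFKMP13}; the one minor imprecision is that the inverse estimate producing the $h^{-1/2}$ is applied to $\varphi_h$ (to pass from $\|\varphi_h\|_{L^2(\Gamma)}$ to $\|\varphi_h\|_{H^{-1/2}(\Gamma)}$), not to $w_0$, but this does not affect the argument.
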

\begin{proof}
The proof is essentially identical to the proof of Proposition~\ref{pro:Hmatrix}. We stress that the bound of the dimension $\dim \widehat{\mathbf{W}}_L \le C_{\rm low} L ^{6d+1}$ is 
worse than for the other couplings, since in the abstract setting, we have to choose $M=3$ and $\alpha=2$, and the bound follows from Lemma~\ref{multi step approximation}.
\end{proof}

Finally, we can prove the existence of 
$\H$-Matrix approximants to the inverse FEM-BEM stiffness matrix.


\begin{proof}[Proof of Theorem \ref{th:H-Matrix approximation of inverses}]
We start with the symmetric coupling.
As $\mathcal{H}$ matrices are low rank only on admissible blocks, 
 we set $\mathbf{B_\H}|_{\tau \times \sigma}= \mathbf{A_{\rm sym}^{-1}}|_ {\tau \times \sigma}$ for non-admissible cluster pairs 
and consider
an arbitrary admissible cluster pair $(\tau,\sigma)$ in the following.

 With a given rank bound $r$, we take $L := \lfloor (r/C_{\rm low})^{1/(3d+1)} \rfloor$.
With this choice, we apply Proposition~\ref{pro:Hmatrix}, which provides a space $\widehat{\mathbf{W}}_L \subset S^{1,1}(\mathcal{T}_h) \times S^{0,0}(\mathcal{K}_h)$
and use this space in Lemma~\ref{th:H-Matrix approximation of inverses-abstract}, which produces matrices $\mathbf{X}_{\tau\sigma},\mathbf{Y}_{\tau\sigma}$ of 
maximal rank $\dim \widehat{\mathbf{W}}_L$, which is by choice of $L$ bounded by
\begin{align*}
\dim \widehat{\mathbf{W}}_L =  C_{\rm low}L^{3d+1} \leq r.
\end{align*}
Proposition~\ref{pro:Hmatrix} can be rewritten in terms of the discrete solution operator of the framework of Section~\ref{sec:abstractMatrixtoFunction}.
 Let $\boldsymbol{f} = (f,v_0,w_0) \in \mathbf{L}^2$ be arbitrary with
$\operatorname*{supp}(\boldsymbol{f}) \subset 
\prod_j D_j(\sigma)$. Then, the locality of the dual functions implies $\big(\operatorname*{supp} f \cup \operatorname*{supp}v_0 \cup 
\operatorname*{supp}w_0\big) \subset B_{R_\sigma}$, and we obtain
\begin{align*}
\inf_{\mathbf{w} \in \widehat{\mathbf{W}}_L}\norm{\mathcal{S}_N \boldsymbol{f}  -\mathbf{w}}_{\mathbf{L}^2(\tau)} &\leq
\inf\limits_{\left( \widetilde{u}, \widetilde{\varphi}\right)  \in \widehat{\mathbf{W}}_L} 
\left(\left\| u_h- \widetilde{u}\right\|_{L^2 (B_{R_\tau} \cap \Omega)}+
\left\| \varphi_h- \widetilde{\varphi}\right\|_{L^{2} (B_{R_\tau}\cap\Gamma)}\right) \\ 
&\lesssim h^{-2}2^{-L}\left(\left\|f\right\|_ {L^2 (\Omega)} 
+\left\|v_0 \right\|_ {L^{2} (\Gamma)}  +\left\|w_0 \right\|_ {L^{2} (\Gamma)} \right) \lesssim h^{-2}2^{-L} \norm{\boldsymbol{f}}_{\mathbf{L}^2}.
\end{align*}
Defining $\mathbf{B}_\H|_{\tau\times \sigma} := \mathbf{X}_{\tau\sigma} \mathbf{Y}_{\tau\sigma}^T$, the estimates \eqref{eq:refinedblockestimate} and $\norm{\Lambda} \lesssim h^{-d/2}$ together with Lemma~\ref{th:H-Matrix approximation of inverses-abstract} then give the error bound
\begin{align*}
\norm{\mathbf{A}_{\rm sym}^{-1}-\mathbf{B}_\H}_2 &\leq C_{\rm sp} \operatorname*{depth}(\mathbb{T}_{\mathcal{I}})\max\{\norm{\mathbf{\mathbf{A}^{-1}-\mathbf{B}_\H}|_{\tau\times \sigma}}_2 : (\tau,\sigma) \in P\} \\
&\leq C_{\rm sp} \operatorname*{depth}(\mathbb{T}_{\mathcal{I}}) \norm{\Lambda}^2 \max_{ (\tau,\sigma) \in P_{\rm far}} 
\sup_{\substack{\boldsymbol{f}  \in \mathbf{L}^2:\\ \operatorname*{supp}(\boldsymbol{f}) \subset 
\prod_j D_j(\sigma)}}
 \frac{\inf_{\mathbf{w} \in \widehat{\mathbf{W}}_L}\norm{\mathcal{S}_N \boldsymbol{f}  -\mathbf{w}}_{\mathbf{L}^2(\tau)}}{\norm{\boldsymbol{f}  }_{\mathbf{L}^2}} \\
&\lesssim  C_{\rm sp} \operatorname*{depth}(\mathbb{T}_{\mathcal{I}}) h^{-(d+2)} 2^{-L} \\
&\leq C_{\rm apx}C_{\rm sp} \operatorname*{depth}(\mathbb{T}_{\mathcal{I}}) h^{-(d+2)} \exp(-b r^{1/(3d+1)}).
\end{align*} 
This finishes the proof for the symmetric coupling.

The approximations to $\mathbf{A}^{-1}_{\rm bmc}$ and $\mathbf{A}^{-1}_{\rm jn}$ are constructed in exactly 
the same fashion. The different exponentials appear due to the different dimensions of the low-dimensional space
$\widehat{\mathbf{W}}_L$ in Proposition~\ref{pro:HmatrixBMC} and Proposition~\ref{pro:HmatrixJN}.
\end{proof}

\section{Numerical results}
\label{sec:numerics}
In this section, we provide a numerical example that supports the theoretical 
results from Theorem~\ref{th:H-Matrix approximation of inverses}, i.e, we compute an exponentially convergent 
$\H$-matrix approximant to an inverse FEM-BEM coupling matrix.

If one is only interested in solving a linear system with one (or few) different right-hand sides, rather than computing the 
inverse -- and maybe even its low-rank approximation -- it is more beneficial to use an iterative solver.
The $\H$-matrix approximability of the inverse naturally allows for black-box preconditioning of the 
linear system. \cite{Bebendorf07} constructed $LU$-decompositions in the $\mathcal{H}$-matrix format 
for FEM matrices by approximating certain Schur-complements 
under the assumption that the inverse can be approximated with arbitrary accuracy. 
Theorem~\ref{th:H-Matrix approximation of inverses} provides such an approximation result and the 
techniques of \cite{Bebendorf07,FMP15,FMP16,FMP17} can also be employed 
to prove the existence of $\H$-LU-decompositions for the whole FEM-BEM matrices for each couplings. 

Here, we additionally present a different, computationally more efficient approach by introducing a
black-box block diagonal preconditioner for the FEM-BEM coupling matrices.

We choose the $3d$-unit cube $\Omega = (0,1)^3$ as our geometry, and we set $\mathbf{C} = \mathbf{I}$.
In the following, we only consider the Johnson-N\'ed\'elec coupling, the other couplings can be treated in 
exactly the same way.
\\

In order to guarantee positive definiteness, we study the stabilized system (see \cite[Thm.~{15}]{AFFKMP13} for the assertion
of positive definiteness)
\begin{align}\label{eq:stabsystem}
\left(\begin{pmatrix} \mathbf{A} & -\mathbf{M}^T \\ \frac{1}{2}\mathbf{M} - \mathbf{K} & \mathbf{V} \end{pmatrix}+ \mathbf{s}\mathbf{s}^T\right)
\begin{pmatrix} \mathbf{x} \\ \boldsymbol{\phi} \end{pmatrix}  = \begin{pmatrix} \mathbf{f} \\ \mathbf{g}  \end{pmatrix},
\end{align}
where the stabilization $\mathbf{s}\in\R^{N+M}$ is given by 
$\mathbf{s}_i = \skp{1,(1/2-K)\xi_i}_{L^2(\Gamma)}$ for $i \in \{1,\dots, N\}$ and 
$\mathbf{s}_i = \skp{1,V\chi_i}_{L^2(\Gamma)}$ for $i \in \{N+1,\dots, M\}$. \\

We stress that \cite{AFFKMP13} show that solving the stabilized (elliptic) system is equivalent to solving the 
non stabilized system (with a modified right-hand side). 
By $\mathbf{A}^{\mathrm{st}}:= \mathbf{A} + \mathbf{b}\mathbf{b}^T$, we denote the stabilization of $\mathbf{A}$,
where $\mathbf{b}$ contains the degrees of freedom of $\mathbf{s}$ corresponding to the FEM part. \\

All computations are made using the C-library \texttt{HLiB}, \cite{HLib}, where we employed a geometric clustering
algorithm with admissibility parameter $\eta = 2$ and a leafsize of $25$. \\

\subsection{Approximation to the inverse matrix}
The $\H$-matrices are computed by using a very accurate blockwise low-rank 
approximation to 
\begin{align}\label{eq:defMstab} 
\mathbf{B} := \begin{pmatrix} \mathbf{A} & -\mathbf{M}^T \\ \frac{1}{2}\mathbf{M} - \mathbf{K} & \mathbf{V} \end{pmatrix} + \mathbf{s}\mathbf{s}^T.
\end{align}
Then, using $\H$-matrix arithmetics and blockwise projection to rank $r$, the $\mathcal{H}$-matrix inverse is 
computed with a blockwise algorithm using $\H$-arithmetics from \cite{GrasedyckDissertation}. 
In order to not compute the full inverse, we use the upper bound
\begin{align*}
 \norm{\mathbf{B}^{-1}- \mathbf{B}_{\mathcal{H}}}_2 \leq \norm{\mathbf{B}^{-1}}_2  \norm{\mathbf{I}- \mathbf{B}\mathbf{B}_{\mathcal{H}}}_2 
\end{align*}
for the error.

We also compute a second approximate inverse by use of the $\mathcal{H}$-LU decomposition, which can be  
computed using a blockwise algorithm from \cite{Lintner04,Bebendorf05}. Hereby, we use 
$\norm{\mathbf{I}- \mathbf{B}(\mathbf{L}_{\mathcal{H}}\mathbf{U}_{\mathcal{H}})^{-1}}_2 $ to measure the error
without computing the inverse of $\mathbf{B}$.

Figure~\ref{fig:convergenceInverse1} shows convergence of the upper bounds of the error 
and the growth of the storage requirements with respect to the block-rank $r$ for two different
problem sizes. 
We observe exponential convergence and linear growth in storage for the approximate inverse using $\H$-arithmetics and the approximate inverse using 
the $\H$-LU decomposition, where the $\H$-LU decomposition performs significantly better.   
The observed exponential convergence is even better than the asserted bound from Theorem~\ref{th:H-Matrix approximation of inverses}.

\begin{figure}[ht]
\begin{minipage}{.50\linewidth}
\includegraphics{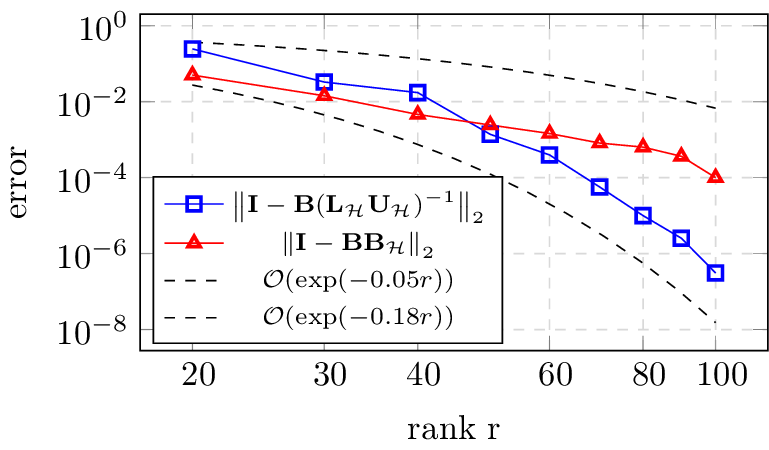}
\end{minipage}
\begin{minipage}{.49\linewidth}
\includegraphics{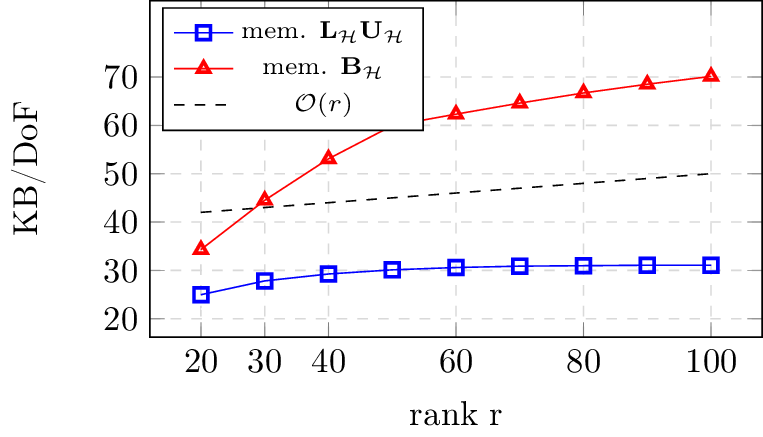}
\end{minipage}

\begin{minipage}{.50\linewidth}
\includegraphics{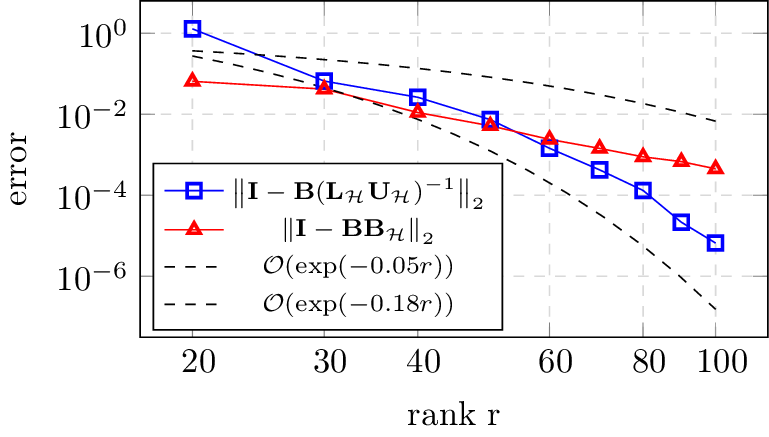}
\end{minipage}
\begin{minipage}{.49\linewidth}
\includegraphics{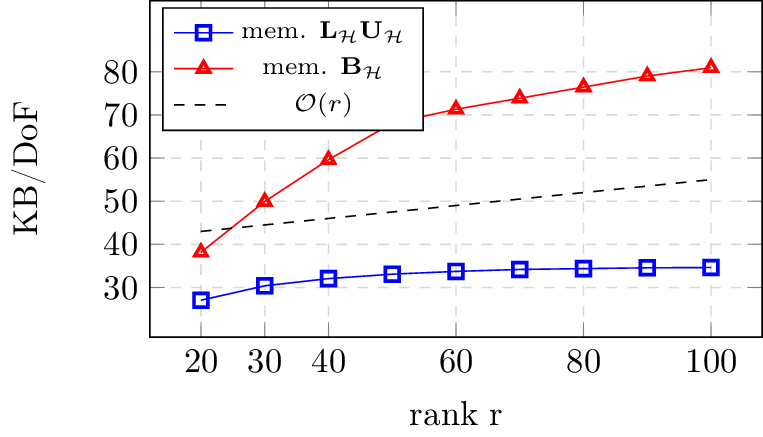}
\end{minipage}
\caption{$\H$-matrix approximation to inverse FEM-BEM matrix; left: error vs.\ block rank $r$; 
right: memory requirement vs.\ block rank $r$; top: $N= 6959$ (FEM-dofs), $M=3888$ (BEM-dofs); 
bottom: $N=10648$, $M=5292$.}
 \label{fig:convergenceInverse1}
\end{figure}

\subsection{Block diagonal preconditioning}

Instead of building an $\H$-LU-decomposition of the whole FEM-BEM matrix, it is significantly cheaper to use  
a block-diagonal preconditioner consisting of $\H$-LU-decompositions for the 
FEM and the BEM part. The efficiency of block-diagonal preconditioners for the FEM-BEM couplings has been 
observed in \cite{MundStephan98,FFPS17}. 

In the following, we consider block diagonal preconditioners of the form
$$
\mathbf{P} = \begin{pmatrix} \mathbf{P}_A & 0 \\ 0 &  \mathbf{P}_V \end{pmatrix},
$$
where $\mathbf{P}_A$ is a good preconditioner for the FEM-block $\mathbf{A}^{\mathrm{st}}$ and 
$\mathbf{P}_V$  is a good preconditioner of the BEM-block $\mathbf{V}$. 

The main result of \cite{FFPS17} is that, provided the 
preconditioners $\mathbf{P}_A$ and $\mathbf{P}_V$ 
fulfill the spectral equivalences 
\begin{align}\label{eq:spectralequivalence}
c_{A}\mathbf{x}^T \mathbf{P}_A \mathbf{x} &\leq \mathbf{x}^T \mathbf{A}^{\mathrm{st}}\mathbf{x} 
\leq C_A \mathbf{x}^T \mathbf{P}_A \mathbf{x} \\
c_V\mathbf{x}^T \mathbf{P}_V \mathbf{x} &\leq \mathbf{x}^T \mathbf{V}\mathbf{x} 
\;\;\leq C_V \mathbf{x}^T \mathbf{P}_V \mathbf{x},
\end{align}
then, $\mathbf{P}$ is a good preconditioner for the full FEM-BEM system. More precisely, 
the condition number $\mathbf{P}^{-1}\mathbf{B}$  (with $\mathbf{B}$ from of \eqref{eq:defMstab}) in the spectral norm can be uniformly bounded by
$$
\kappa_2(\mathbf{P}^{-1}\mathbf{B}) \leq C \frac{\max\{C_A,C_V\}}{\min\{c_A,c_V\}},
$$
where the constant $C$ only depends on the coefficient in the transmission problem.
As a consequence, one expects that the number of GMRES iterations needed to reduce
the residual by a factor remains bounded independent of the matrix size. 

Therefore, we need to provide the preconditioners $\mathbf{P}_A,\mathbf{P}_V$ and prove the spectral equivalences
\eqref{eq:spectralequivalence}. In the following, 
we choose hierarchical $LU$-decompositions as black-box preconditioners, i.e., 
\begin{align*}
\mathbf{P}_A := \mathbf{L}^A_{\H}\mathbf{U}^A_{\H}, \quad 
\mathbf{P}_V := \mathbf{L}^V_{\H}\mathbf{U}^V_{\H},
\end{align*}
where $\mathbf{A}^{\mathrm{st}} \approx \mathbf{L}^A_{\H}\mathbf{U}^A_{\H}$ and
$\mathbf{V} \approx \mathbf{L}^V_{\H}\mathbf{U}^V_{\H}$.
\cite{FMP15,FMP16} prove that such $LU$-decompositions of arbitrary accuracy 
exist for the FEM and the BEM part and the errors, denoted by 
$\varepsilon_A$ and $\varepsilon_V$, converge exponentially in the block-rank of the $\H$-matrices.

With
$\norm{\mathbf{P}_A-\mathbf{A}^{\mathrm{st}}}_2 \leq \varepsilon_A \norm{\mathbf{A}^{\mathrm{st}}}_2$, we estimate
\begin{align}\label{eq:spectralequivA}
\abs{\mathbf{x}^T\mathbf{P}_A\mathbf{x} -\mathbf{x}^T\mathbf{A}^{\mathrm{st}}\mathbf{x}}
\leq \norm{\mathbf{x}}_2^2\norm{\mathbf{P}_A-\mathbf{A}^{\mathrm{st}}}_2 \leq 
\varepsilon_A\norm{\mathbf{x}}_2^2\norm{\mathbf{A}^{\mathrm{st}}}_2
\leq C_1 \varepsilon_A h^{-d} \mathbf{x}^T\mathbf{A}^{\mathrm{st}}\mathbf{x},
\end{align}
where the last step follows from the scaling of the basis of the FEM part and the positive definiteness of 
$\mathbf{A}^{\rm st}$.
In the same way, for $\mathbf{P}_V$ it follows that  
\begin{align}\label{eq:spectralequivV}
\abs{\mathbf{x}^T\mathbf{P}_V\mathbf{x} -\mathbf{x}^T\mathbf{V}\mathbf{x}}\leq \norm{\mathbf{x}}_2^2\norm{\mathbf{P}_V-\mathbf{V}}_2 \leq 
\varepsilon_V\norm{\mathbf{x}}_2^2\norm{\mathbf{V}}_2\leq C_2 \varepsilon_V h^{-d+1} \mathbf{x}^T\mathbf{V}\mathbf{x}.
\end{align}
Choosing the rank of the $\H$-$LU$-decomposition large enough, such that, e.g., 
$C_1\varepsilon_A h^{-d} = \frac{1}{2}$ as well as
$C_2\varepsilon_V h^{-d+1} = \frac{1}{2}$,
then $C_A=C_V = 2$ and $c_A =c_V = \frac{2}{3}$ and the condition number of the preconditioned system is bounded by
$\kappa_2(\mathbf{P}^{-1}\mathbf{B}) \leq 3C$.\newline

Finally, we present a numerical simulation that underlines the usefulness of block-diagonal $\H$-$LU$-preconditioners.

Here, 
the $\mathcal{H}$-$LU$ decompositions are computed with a recursive 
algorithm proposed in \cite{Bebendorf05}.

The following table provides iteration numbers and computation times for the iterative solution of the 
system without and with $\mathcal{H}$-$LU$-block diagonal preconditioner using GMRES. Here, for the 
stopping criterion a bound of $10^{-3}$ for the relative residual is chosen, and the maximal 
rank of the $\mathcal{H}$-$LU$ decomposition is taken to be $r=1$.

\begin{center}
\begin{tabular}{|c|c|c|c|c|c|c|c|}\hline 
$h$ & FEM & BEM & Iterations 
& Iterations & Time solve & Time solve & Time \\ 
 & DOF & DOF &  (without $\mathbf{P}$) 
& (with $\mathbf{P}$) & (without $\mathbf{P}$) 
& (with $\mathbf{P}$) & assembly $\mathbf{P}$ \\ 
\hline $2^{-3}$ & 729 & 768 & 679 & 3 & 3.7 & 0.03 & 2.6 \\ 
\hline $2^{-4}$ & 4913 & 3072 & 3565 & 4 & 315 & 0.9 & 12.2 \\ 
\hline $2^{-5}$ & 35937 & 12288 & 11979 & 5 & 35254 & 30 & 51.9 \\ 
\hline
\end{tabular}
\captionof{table}{
Iteration numbers and computation times (in seconds) for the solution with and without preconditioner with 
block rank $r=1$.}
\end{center}

As expected, the iteration numbers of the preconditioned system is much lower than those of the unpreconditioned 
system and grow very slowly. The computational cost for the preconditioner is theoretically of order
$\mathcal{O}(r^3N\log^3N)$. With the choice $r=1$, we obtain a cheap but efficient preconditioner for the 
FEM-BEM coupling system. 

Table~2 provides the same computations for the case $r=10$.

\begin{center}
\begin{tabular}{|c|c|c|c|c|c|c|c|}\hline 
$h$ & FEM & BEM & Iterations 
& Iterations & Time solve & Time solve & Time \\ 
 & DOF & DOF &  (without $\mathbf{P}$) 
& (with $\mathbf{P}$) & (without $\mathbf{P}$) 
& (with $\mathbf{P}$) & assembly $\mathbf{P}$ \\ 
\hline $2^{-3}$ & 729 & 768 & 679 & 2 & 3.7 & 0.02 & 5.8 \\ 
\hline $2^{-4}$ & 4913 & 3072 & 3565 & 2 & 315  & 0.48 & 24.6 \\ 
\hline $2^{-5}$ & 35937 & 12288 & 11979 & 2 & 35254  & 15.7 & 243.7 \\ 
\hline
\end{tabular}
\captionof{table}{
Iteration numbers and computation times (in seconds) for the solution with and without preconditioner with 
block rank $r=10$.}
\end{center}

A higher choice of rank obviously increases the computational time for the assembly of the preconditioner, but 
leads to lower iteration numbers and faster solution times.

\bibliography{bibliography_2}{}
\bibliographystyle{amsalpha}
\end{document}